\newcommand{\closure}[2][3]{%
{}\mkern#1mu\overline{\mkern-#1mu#2}}
\numberwithin{equation}{section}
\DeclareSymbolFont{cyrletters}{OT2}{wncyr}{m}{n}
\DeclareMathSymbol{\Sha}{\mathalpha}{cyrletters}{"58}
\newcommand{\cF}{\mathcal{F}}
\newcommand{\cL}{\mathcal{L}}
\newcommand{\cH}{\mathcal{H}}
\newcommand{\cP}{\mathcal{P}}
\newcommand{\cA}{\mathcal{A}}
\newcommand{\cyc}{\op{cyc}}
\newcommand{\ac}{\op{ac}}
\newcommand{\Hom}{\op{Hom}}
\newcommand{\ord}{\op{ord}}
\newcommand{\corank}{\op{corank}}
\newcommand{\GL}{\op{GL}_2}
\newcommand{\Gal}{\op{Gal}}
\newcommand{\Sel}{\op{Sel}}
\newcommand{\Z}{\mathbb{Z}}
\newcommand{\Q}{\mathbb{Q}}
\newcommand{\Kinf}{K_{\infty}}
\newcommand{\Linf}{L_{\infty}}
\newcommand{\F}{\mathbb{F}}
\newcommand{\mupK}{\mu(E/\Kinf)}
\newcommand{\mupL}{\mu(E/\Linf)}
\newcommand{\lambdapK}{\lambda(E/\Kinf)}
\newcommand{\lambdapL}{\lambda(E/\Linf)}
\newcommand{\Kan}{K^{\ac}}
\newcommand{\SelpL}{\Sel(E/L_{\infty})}
\newcommand{\SelpK}{\Sel(E/K_{\infty})}
\newcommand{\op}[1]{\operatorname{#1}}
\theoremstyle{plain}
 \theoremstyle{definition}
\newtheorem{Th}{Theorem}[section]
\newtheorem{Lemma}[Th]{Lemma}
\newtheorem {lthm}{Theorem} 
\newtheorem{Cor}[Th]{Corollary}
\newtheorem{Proposition}[Th]{Proposition}
\newtheorem{assumption}[Th]{Assumption}
\newtheorem{Remark}[Th]{Remark}
\theoremstyle{definition}
\newtheorem{Defi}[Th]{Definition}
\begin{document}

\title{Iwasawa Invariants for elliptic curves over $\Z_{p}$-extensions and Kida's Formula}
\author{Debanjana Kundu}
\address{Department of Mathematics \\ University of British Columbia \\
 Vancouver BC, V6T 1Z2, Canada.} 
 \email{dkundu@math.ubc.ca}
\author{Anwesh Ray}
\address{Department of Mathematics \\ University of British Columbia \\
 Vancouver BC, V6T 1Z2, Canada.} 
 \email{anweshray@math.ubc.ca}

\keywords{$\lambda$-invariant, Kida's formula}
\subjclass[2010]{Primary 11R23}

\begin{abstract}
This paper aims at studying the Iwasawa $\lambda$-invariant of the $p$-primary Selmer group.
We study the growth behaviour of $p$-primary Selmer groups in $p$-power degree extensions over non-cyclotomic $\Z_p$-extensions of a number field.
We prove a generalization of Kida's formula in such a case.
Unlike the cyclotomic $\Z_p$-extension, where all primes are finitely decomposed, in the $\Z_p$-extensions we consider, primes may be infinitely decomposed.
In the second part of the paper, we study the relationship of Iwasawa invariants with respect to congruences, obtaining refinements of the results of R.~Greenberg--V.~Vatsal and K.~Kidwell.
As an application, we provide an algorithm for constructing elliptic curves with large anticyclotomic $\lambda$-invariant.
Our results are illustrated by explicit computation.
\end{abstract}

\maketitle
\section{Introduction}
The classical Riemann--Hurwitz formula describes the relationship of the Euler characteristics of two surfaces when one is a ramified covering of the other.
Suppose $\pi: R_1 \rightarrow R_2$ is an $n$-fold covering of compact, connected Riemann surfaces and $g_1, \ g_2$ are their respective genus.
The classical Riemann--Hurwitz formula is the statement
\[
2g_2 -2 = (2g_1-2)n + \sum\left(e(P_2)-1\right)
\] 
where the sum is over all points $P_2$ on $R_2$ and $e(P_2)$ denotes the ramification index of $P_2$ for the covering $\pi$ (see \cite[Chapter II Theorem~5.9]{Sil09}).
An analogue of the above formula for algebraic number fields was proven by Y.~Kida in \cite{Kid80}.
Kida's formula describes the change of (cyclotomic) Iwasawa $\lambda$-invariants in a $p$-extension in terms of the degree and the ramification index.
In \cite{Iwa81}, K.~Iwasawa proved this formula using the theory of Galois cohomology for extensions of $\Q$ which are not necessarily finite.
More precisely,

\begin{lthm}[{\cite[Theorem~6]{Iwa81}}]
\label{thm: classical Kida's formula}
Let $p\geq 2$ and $K$ be a number field.
Let $K^{\cyc}$ be the cyclotomic $\Z_p$-extension of $K$ and $\cL/ K^{\cyc}$ be a cyclic extension of degree $p$, unramified at every infinite place of $K_{\cyc}$.
Assume that the classical $\mu$-invariant, $\mu(K^{\cyc})=0$.
Then
\[
\lambda(\cL) = p\lambda(K^{\cyc}) + \sum_{w^{\prime}}\left(e(w^{\prime}| v^{\prime})-1\right) + (p-1)(h_2 -h_1).
\] 
The sum is over all primes $w^{\prime}$ of $\cL$ (above $v^\prime$ in $K^{\cyc}$) not above $p$, $h_i$ is the rank of the abelian group $H^i(\cL/K_{\cyc}, U(\cL))$, and $U(\cL)$ is the group of all units of $\cL$.
\end{lthm}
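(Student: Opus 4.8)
\emph{Step 1 (coranks).} The plan is to run Iwasawa's Galois-cohomological argument: express both $\lambda$-invariants as $\Z_p$-ranks of Iwasawa modules, pass to the cohomology of a big Galois group, use multiplicativity of Euler characteristics over the degree-$p$ extension to produce the factor $p$, and then bookkeep the local and unit corrections. Concretely, write $F=K^{\cyc}$, $F'=\cL$, $G=\Gal(F'/F)\cong\Z/p$, and let $S$ be the finite set of primes of $F$ above $p$, at $\infty$, or ramified in $F'$; let $G_S(F),G_S(F')$ be the Galois groups of the maximal extensions unramified outside $S$. Since $\mu(K^{\cyc})=0$ --- and, as one checks first, $\mu(\cL)=0$ too, this propagating in the $p$-extension $\cL/K^{\cyc}$ --- the unramified Iwasawa modules over $F$ and $F'$ are finitely generated over $\Z_p$ with ranks $\lambda(K^{\cyc})$ and $\lambda(\cL)$, and the global duality (Poitou--Tate) exact sequence relates them to $\corank_{\Z_p}H^1(G_S(-),\Q_p/\Z_p)$ together with explicit local coranks at the places of $S$ and an archimedean/unit term.

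\emph{Step 2 (the factor $p$).} The subgroup $G_S(F')\subset G_S(F)$ is open of index $p$, and the weak Leopoldt conjecture holds for the cyclotomic $\Z_p$-extension (a theorem of Iwasawa), so $H^2(G_S(-),\Q_p/\Z_p)=0$ and the Euler characteristic $\chi=\corank H^0-\corank H^1$ is defined. Multiplicativity of $\chi$ in open subgroups gives $\chi(G_S(F'),\Q_p/\Z_p)=p\cdot\chi(G_S(F),\Q_p/\Z_p)$. Combining this with Step 1, one gets $\lambda(\cL)=p\,\lambda(K^{\cyc})+(\text{correction})$, the correction being a combination of differences of local coranks over $S$ and of the $H^0$-terms.

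\emph{Step 3 (identifying the correction).} One now evaluates the correction place by place. For a prime $v'$ of $K^{\cyc}$ not above $p$ that ramifies in $\cL$, the ramification is tame and (being of degree $p$) totally ramified, so there is a unique $w'$ above it and its net contribution is $e(w'|v')-1=p-1$; summing gives $\sum_{w'}(e(w'|v')-1)$. At the primes above $p$ the local contributions cancel against $p$ times the corresponding terms for $F$ and against the $H^0$-terms --- here one uses that all primes of $K^{\cyc}$ over $p$ are finitely decomposed and totally ramified in the cyclotomic tower. What survives is precisely the Herbrand-quotient contribution of the global units, which the Dirichlet unit theorem turns into $(p-1)(h_2-h_1)$ with $h_i=\rank H^i(\cL/K^{\cyc},U(\cL))$; the hypothesis that $\cL/K^{\cyc}$ is unramified at every infinite place is exactly what makes the archimedean places split completely and keeps this term as stated. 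Assembling Steps 2 and 3 yields the formula.

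\emph{Main obstacle.} I expect the technical heart to be twofold. First, one must prove that $H^i(\cL/K^{\cyc},U(\cL))$ is finite, so that the $h_i$ are well defined: since $U(\cL)$ is the unit group of an infinite extension --- the direct limit of the unit groups of the finite layers --- this means showing the cohomology stabilizes up the tower, and it is here that $\mu(K^{\cyc})=0$ is essential. Second, and more delicately, one must show that the primes above $p$ contribute nothing to the formula beyond $p\,\lambda(K^{\cyc})$ and the unit term; unlike in the topological Riemann--Hurwitz setting there is no finite branch locus at $p$ --- those primes are infinitely ramified in the $\Z_p$-direction --- so the local analysis at $p$, via the structure of local units in the cyclotomic $\Z_p$-extension, is the step that demands the most care.
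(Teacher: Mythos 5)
The paper states this theorem only as a quoted result from Iwasawa's 1981 paper, as motivation for the elliptic-curve analogues proved later; it contains no proof of it, so there is no internal argument to compare against. Judged as a reconstruction of Iwasawa's published method, your sketch has the right overall shape: re-expressing $\lambda$-invariants as $\Z_p$-coranks of global $H^1$ over the $S$-ramified Galois group, invoking weak Leopoldt to kill $H^2$ and so make the Euler--Poincar\'e characteristic well-defined, using multiplicativity of that characteristic for the index-$p$ open subgroup to produce the factor $p$, reading off tame contributions $e(w'|v')-1$ at primes away from $p$, and recognising the remaining global term as the Herbrand quotient of the unit group. You also correctly flag the two genuine technical pressure points: propagation of $\mu=0$ along $\cL/K^{\cyc}$, and finiteness of the $h_i$ (which Iwasawa gets by stabilising unit cohomology up the tower).

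Two small imprecisions worth noting. First, a prime $v'\nmid p$ that ramifies in the degree-$p$ extension $\cL/K^{\cyc}$ is totally ramified, so there is a unique $w'$ above it, and its contribution is $p-1$; but the stated sum ranges over \emph{all} $w'\nmid p$, with unramified primes contributing $0$ and split primes contributing $0$ each --- your phrasing conflates the ramified case with the whole sum, and you should also use that every finite prime of $K^{\cyc}$ has decomposition group of finite index in $\Gamma$, so the sum is genuinely over finitely many ramified $w'$. Second, the hypothesis on archimedean places only bites when $p=2$; for odd $p$ infinite places are automatically unramified. Its purpose is not to force splitting but to make the Tate cohomology at archimedean places vanish, which is what removes any archimedean correction term. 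These are cosmetic, not fatal, and the plan as a whole matches the approach attributed to Iwasawa by the paper.
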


In the study of rational points of elliptic curves, the Selmer group plays a crucial role.
In \cite{Maz72}, B.~Mazur initiated the study of the growth of the $p$-primary Selmer group in $\Z_p$-extension of number fields.
In \cite{HM99}, Y.~Hachimori and K.~Matsuno proved an analogue of Theorem~\ref{thm: classical Kida's formula} for $p$-primary Selmer groups of elliptic curves in $p$-power extensions of a (fixed) number field over the \emph{cyclotomic} $\Z_p$-extension.
This result has been generalized in several directions.
For modular forms over the \emph{cyclotomic} $\Z_p$-extension, analogous results have been worked out for signed Selmer groups at non-ordinary primes by J.~Hatley--A.~Lei, see \cite[Theorem~6.7]{HL19}.
It has been extended to a general class of Galois representations over the \emph{cyclotomic} $\Z_p$-extension, including the case of $p$-ordinary Hilbert modular forms and $p$-supersingular modular forms by R.~Pollack--T.~Weston in \cite{PW06}.
The first named author studied Kida-type formula for fine Selmer groups of elliptic curves over the \emph{cyclotomic} $\Z_p$-extension of totally real number fields in \cite{Kun21}.
Kida-type formulae for $p$-primary Selmer groups of elliptic curves have also been proven over special classes of non-abelian $p$-adic Lie extensions containing the \emph{cyclotomic} $\Z_p$-extension by A.~Bhave in \cite{Bha07}.
In the first part of this article, we prove a Kida-like formula for $p$-primary Selmer groups of elliptic curves in \emph{more general} $\Z_p$-extensions (see Theorem~\ref{main theorem}).

For ease of exposition, in the introduction, we state our main result in a simplified setting.
Let $p\geq 5$ be a fixed rational prime.
Let $E_{/\Q}$ be an elliptic curve without complex multiplication and with good ordinary reduction at $p$.
Let $K$ be a fixed imaginary quadratic field and $\Kan$ denote the anticyclotomic $\Z_p$-extension of $K$.
Let $L/K$ be a Galois extension of $p$-power degree disjoint from $\Kan$ and $L_\infty$ be the compositum $L\cdot \Kan$.
Suppose that the $p$-primary Selmer group over $\Kan$ (and $\Linf$) is cofinitely generated as a $\Z_p$-module.
Then our result relates their respective $\lambda$-invariants.
\begin{lthm}\label{theorem1}
With the setting as above,
\[\lambda(E/\Linf)=[L_{\infty}:\Kan] \lambda(E/\Kan)+\sum_{w'\in \cP_1} \left(e(w^\prime|v^\prime)-1\right)+\sum_{w'\in \cP_2} 2\left(e(w^\prime|v^\prime)-1\right)\]
where $e(w^\prime|v^\prime)$ is the ramification index of $w^\prime$ above a prime $v^\prime$ (in $\Kan$) and the sets $\cP_1, \ \cP_2$ are as in Definition \ref{sets Pi}.
\end{lthm}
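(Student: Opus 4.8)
We outline the strategy of proof.

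The plan is to adapt the cohomological method of Iwasawa \cite{Iwa81}, in the form refined by Hachimori--Matsuno \cite{HM99} and Pollack--Weston \cite{PW06}, to the present non-cyclotomic setting; the one genuinely new ingredient will be the local analysis at primes that are infinitely decomposed in $\Kan/K$. Since the $p$-primary Selmer groups over $\Kan$ and $\Linf$ are cofinitely generated over $\Z_p$, their Pontryagin duals are $\Lambda$-torsion with vanishing $\mu$-invariant, so that $\lambda(E/\Kan)=\corank_{\Z_p}\Sel(E/\Kan)$ and $\lambda(E/\Linf)=\corank_{\Z_p}\Sel(E/\Linf)$. As $\Gal(\Linf/\Kan)\cong\Gal(L/K)$ is a finite $p$-group, I would first fix a filtration $\Kan=F_0\subset F_1\subset\dots\subset F_r=\Linf$ with each $F_{i+1}/F_i$ cyclic of degree $p$. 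Each $\Sel(E/F_i)$ embeds with finite kernel into $\Sel(E/\Linf)^{\Gal(\Linf/F_i)}$ and so is again cofinitely generated over $\Z_p$, and ramification indices are multiplicative in towers; hence it suffices to treat the case $G:=\Gal(\Linf/\Kan)\cong\Z/p\Z$.

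Next I would compare $\Sel(E/\Kan)$ and $\Sel(E/\Linf)$ through the standard commutative diagram assembled from their defining global-to-local sequences, with vertical restriction maps over $G$. The crucial point is that, because $E$ is non-CM and $p\geq 5$, Serre's open image theorem forces $E(\Linf)[p^\infty]$ to be finite, so the inflation--restriction terms $H^i(G,E(\Linf)[p^\infty])$ are finite and invisible to $\Z_p$-coranks; in particular the analogue of Iwasawa's global correction term $(p-1)(h_2-h_1)$ vanishes. Granting the surjectivity of the global-to-local defining map --- which follows from the $\Lambda$-cotorsion hypothesis by Poitou--Tate duality, as in Greenberg's work --- and running the comparison layer by layer along $\Kan/K$ before passing to the limit, one is reduced to an identity of the shape
\[\lambda(E/\Linf)=[\,\Linf:\Kan\,]\,\lambda(E/\Kan)+\sum_{v'}\delta_{v'},\]
where $\delta_{v'}$ is a purely local term attached to each prime $v'$ of $\Kan$, the factor $[\,\Linf:\Kan\,]$ emerging exactly as in the cyclotomic argument.

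It then remains to evaluate $\delta_{v'}$, which is the discrepancy between $\sum_{w'\mid v'}\corank_{\Z_p}\cH_{w'}(E/\Linf)$ and $[\,\Linf:\Kan\,]\cdot\corank_{\Z_p}\cH_{v'}(E/\Kan)$, where $\cH_{v'}$ denotes the local condition cutting out the Selmer group at $v'$. For $v'\nmid p$ this is a local Tate-duality computation over the completions; for $v'\mid p$ one uses the ordinary filtration $0\to\hat E[p^\infty]\to E[p^\infty]\to E[p^\infty]/\hat E[p^\infty]\to 0$ together with Greenberg's local condition, in which the sub- and the quotient-representation each contribute. In every case $\delta_{v'}$ turns out to be a non-negative integer multiple of $e(w'\mid v')-1$, and sorting the primes according to whether this multiplicity equals $1$ or $2$ produces precisely the sets $\cP_1$ and $\cP_2$ of Definition \ref{sets Pi}; the stated formula follows.

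I expect the main obstacle to be exactly the local analysis at primes that do not arise in the cyclotomic case of \cite{HM99}: $\Kan$ may have infinitely many primes $v'$ above a prime of $K$ that splits completely in $\Kan/K$, and for such $v'$ the completion $\Kan_{v'}$ is a \emph{finite} extension of $\Q_\ell$ rather than a deeply ramified infinite one. One must therefore (i) carry out the computation of $\delta_{v'}$ uniformly over all such completions; (ii) show that all but finitely many of these primes have $\delta_{v'}=0$ --- which is forced by the cofinite-generation hypothesis, since a nonzero local term at infinitely many primes lying in a single $\Gamma$-orbit would produce a nonzero $\mu$-invariant --- so that the sum in the displayed formula is finite; and (iii) verify that the global-to-local surjectivity used above persists in their presence. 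Once these points are settled, the remaining steps are routine adaptations of the established method.
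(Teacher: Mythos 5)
Your overall strategy — Iwasawa's cohomological method as refined by Hachimori--Matsuno and Pollack--Weston, Poitou--Tate global-to-local surjectivity, finiteness of $E(\Linf)[p^\infty]$ via Serre, and a local analysis isolating a new phenomenon at infinitely decomposed primes — is essentially the route the paper takes, and your reduction to $G\cong\Z/p\Z$ and your observation that the inflation--restriction terms are finite match the paper's Proposition~\ref{finite kernel and cokernel}. However, your description of how the factor $[\Linf:\Kinf]$ arises is a genuine gap. You say it emerges ``by running the comparison layer by layer along $\Kan/K$ before passing to the limit, exactly as in the cyclotomic argument,'' but the cyclotomic argument does \emph{not} proceed by finite-layer comparison; it is driven by Iwasawa's Herbrand-quotient identity $\lambdapL = p\,\lambdapK + (p-1)\,\ord_p\bigl(h_G(\SelpL)\bigr)$, recorded as \eqref{lambdaL to lambdaK} in the paper. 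This identity is what produces the multiplicative factor, and the entire local computation then reduces to evaluating $\ord_p\bigl(h_G(E(L_{\infty,w'})[p^\infty])\bigr)$ for the finitely many primes $w'\in\cP_1\cup\cP_2$. Without making this formula explicit, the passage from your ``commutative diagram'' to a Riemann--Hurwitz-shaped relation is unjustified: a naive layer-by-layer corank count along $K_n$ does not track $\lambda$-invariants correctly without further control-theorem input that you have not supplied.

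The second and more worrying issue is your point~(ii). You assert that the vanishing of $\delta_{v'}$ for all but finitely many primes at infinitely decomposed places is ``forced by the cofinite-generation hypothesis, since a nonzero local term at infinitely many primes lying in a single $\Gamma$-orbit would produce a nonzero $\mu$-invariant.'' The paper does not prove this; it instead \emph{assumes} it (Assumption~\ref{finitely decomposed}), requiring that every prime in $\cP_1\cup\cP_2$ lies over a prime in $\Sigma$. Your proposed deduction does not follow immediately: the Selmer group is the \emph{kernel} of the localization map, so an infinite or non-cotorsion $J_v(E/\Linf)$ does not directly force $\mu(E/\Linf)>0$. One would have to argue through the full Poitou--Tate sequence and the surjectivity statement, and such an argument is not present. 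As written, your plan would either rely on an unproved implication or would yield a theorem whose hypotheses are weaker than what you can actually establish. You should either supply a proof of this implication or add the analogue of Assumption~\ref{finitely decomposed} as a hypothesis, as the paper does.
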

We remark that the $p$-primary Selmer group need \emph{not always} be $\Lambda$-cotorsion over the anticyclotomic $\Z_p$-extension, $\Kan$ (see \cite{Ber95, Cor02, Vat03}).
However, by the work of Vatsal \cite{Vat02}, M.~Bertolini--H.~Darmon \cite{BD05}, and Pollack--Weston \cite[Theorem~5.3]{PW11}, it is known that if some natural conditions are satisfied, then the $p$-primary Selmer group is not only $\Lambda$-cotorsion, but also cofinitely generated as a $\Z_p$-module.
We show (in Corollary~\ref{cor: mu=0 for L}) that this property propagates to all $p$-power extensions $L$ of $K$ (which are disjoint from $\Kan$) satisfying the additional condition that if $E$ has split multiplicative reduction at $\ell$ and $v|\ell$ is inert in $K$, then $v$ splits completely in $L$.
We expect that the aforementioned result can be extended to the case of plus/minus Selmer groups when $p$ is a prime of supersingular reduction under reasonable hypotheses introduced by B.~D.~Kim in \cite{Kim13}.

Two elliptic curves $E_1$ and $E_2$ over a number field $K$ are said to be $p$-congruent if their associated residual representations are isomorphic, i.e., $E_1[p]$ and $E_2[p]$ are isomorphic as $\op{Gal}(\closure{K}/K)$-modules.
There is much interest in investigating the behaviour of Iwasawa invariants for congruent Galois representations.
Such investigations were initiated by Greenberg--Vatsal in \cite{GV00}.
They considered the case when both $E_1$ and $E_2$ are defined over $\Q$, and showed how the (cyclotomic) Iwasawa $\mu$ and $\lambda$-invariants for the $p$-primary Selmer groups of $E_1$ and $E_2$ are related.
Let $\mathfrak{T}$ be the set of primes $v\nmid p$ of $K$ at which $E_1$ or $E_2$ has bad reduction.
Over $K^{\cyc}$, Greenberg--Vatsal compare the $\mathfrak{T}$-imprimitive Selmer groups of $E_1$ and $E_2$.
These results were conditionally generalized by Kidwell for $\Z_p$-extensions of number fields (see \cite[Theorem~6.1]{Kid18}).
This requires a finiteness assumption, which is satisfied if primes in $\mathfrak{T}$ are finitely decomposed in the $\Z_p$-extension, $\Kinf$.
We refine the results of Greenberg--Vatsal and Kidwell in two ways.
We show that the set of primes $\mathfrak{T}$ can be replaced by a smaller set $\Omega_0$ (see Definition \ref{def of Omega0}).
Having a smaller set of primes $\Omega_0$ to consider makes for a more refined relationship between $\lambda$-invariants for the Selmer groups of $E_1$ and $E_2$.
We show that
\begin{equation}\label{lambda minus lambda}
\lambda(E_1/\Kinf) -\lambda(E_2/\Kinf)= \sum_{v\in \Omega_0}  \left(\sigma_{E_2}^{(v)} - \sigma_{E_1}^{(v)}\right),
\end{equation}
where $\sigma_{E_i}^{(v)}$ is the corank of a certain locally defined $\Lambda$-module (see Definition \ref{random defi needed to be cited}).
We also clarify the assumptions on the splitting of primes in $\Z_p$-extensions.
These assumptions apply only to the smaller set of primes $\Omega_0$ and not to the set $\mathfrak{T}$.
The approach used here is similar to that of the second named author and R.~Sujatha, who prove such results for the cyclotomic $\Z_p$-extension in \cite{ray20}.

As an application of our results on congruences, we provide an algorithm for constructing elliptic curves whose Selmer group has a large $\lambda$-invariant.
Such elliptic curves are considered over the anticyclotomic $\Z_p$-extension of an imaginary quadratic field.
In \cite{Mat07}, this question has been studied for some small primes (i.e., $p\leq 7$ or $p=13$) for elliptic curves with good \emph{ordinary} reduction at $p$, over the cyclotomic $\Z_p$-extension (of $\Q$).
In \cite{Kim09}, elliptic curves over $\Q$ with \emph{supersingular} reduction at $p=3$ and arbitrarily large (plus/minus) $\lambda$-invariant are constructed.
The latter method is more amenable for generalization to our setting.
In Section \ref{section: large lambda invariant}, we fix the elliptic curve $E=32a2$ (Cremona label) and the prime $p=5$.
Using the work of K.~Rubin--A.~Silverberg (see \cite{RS95}), we have an algorithm to produce elliptic curves which are 5-congruent to $E$ with large anticyclotomic $\lambda$-invariant over $K=\Q(\sqrt{-1})$.
Here, we crucially use the formula \eqref{lambda minus lambda}, which relates $\lambda$-invariants of the Selmer groups of elliptic curves in Rubin--Silverberg family.
We illustrate our algorithm via explicit computation.
\section{Basic Notions}
Throughout this article, let $p\geq 5$ be a fixed prime number and $K$ be a number field.
Let $\Kinf$ be any $\Z_p$-extension of $K$.
Two examples of interest are as follows:
\begin{enumerate}
\item $K$ is a number field and $\Kinf=K^{\cyc}$ is the \emph{cyclotomic} $\Z_p$-extension of $K$.
This is the unique $\Z_p$-extension of $K$ contained in $K(\mu_{p^{\infty}})$.
\item $K$ is an imaginary quadratic field and $\Kinf= K^{\ac}$ is an \emph{anticyclotomic} $\Z_p$-extension of $K$.
This is a $\Z_p$-extension of $K$ which is Galois and pro-dihedral over $\Q$.
More generally, $K_\infty$ may be chosen to be an anticyclotomic $\Z_p$-extension over any CM field, $K$.
\end{enumerate}
If $K$ is any number field with signature $(r_1, r_2)$, the Leopoldt's conjecture predicts that the $\Z_p$-rank of the maximal abelian pro-$p$ extension of $K$ unramified away from primes above $p$ is $1+r_2$.
In other words, it predicts that there are $1+r_2$ \emph{independent} $\Z_p$-extensions of $K$.
For an imaginary quadratic field, the cyclotomic and anticyclotomic extensions together generate a $\Z_p^2$-extension, containing infinitely many $\Z_p$-extensions that are unramified at all primes $v\nmid p$ and not Galois over $\Q$.

Set $\Gamma:=\Gal(\Kinf/K)\simeq \Z_p$ and $\Gamma_n:=\Gal(\Kinf/K_n)$.
The Iwasawa algebra $\Lambda$ is the completed group ring $\varprojlim_n \Z_p\llbracket \Gamma/\Gamma_n\rrbracket$.
Choose a topological generator $\gamma\in\Gamma$.
There is an isomorphism $\Lambda \simeq \Z_p\llbracket T \rrbracket $ associating the formal variable $T$ with $\gamma-1$.

For a discrete $p$-primary abelian group $M$, set $M^{\vee}:=\Hom_{\op{cnts}}(M, \Q_p/\Z_p)$ to denote its \emph{Pontryagin dual}.
We say that $M$ is a cofinitely generated (resp. cotorsion) $\Lambda$-module if $M^{\vee}$ is finitely generated (resp. torsion) as a $\Lambda$-module.
For a cofinitely generated and cotorsion $\Lambda$-module $M$, the \emph{Structure Theorem~of $\Lambda$-modules} asserts that $M^{\vee}$ is pseudo-isomorphic to a finite direct sum of cyclic $\Lambda$-modules, i.e., there is a map of $\Lambda$-modules
\[
M^{\vee}\longrightarrow \left(\bigoplus_{i=1}^s \Lambda/(p^{m_i})\right)\oplus \left(\bigoplus_{j=1}^t \Lambda/(h_j(T)) \right)
\]
with finite kernel and cokernel.
Here, $m_i>0$ and $h_j(T)$ is a distinguished polynomial (i.e., a monic polynomial with non-leading coefficients divisible by $p$).
The \emph{characteristic ideal} of $M^\vee$ is (up to a unit) generated by the characteristic element,
\[
f_{M}^{(p)}(T) := p^{\sum_{i} m_i} \prod_j h_j(T).
\]
The $\mu$-invariant of $M$ is defined as the power of $p$ in $f_{M}^{(p)}(T)$.
More precisely,
\[
\mu(M):=\begin{cases}0 & \textrm{ if } s=0\\
\sum_{i=1}^s m_i & \textrm{ if } s>0.
\end{cases}
\]
The $\lambda$-invariant of $M$ is the degree of the characteristic element, i.e.
\[
\lambda(M) := \sum_{j=1}^t \deg h_j(T).
\]
\begin{Remark}\label{splitting remark}
Write $\Sigma$ for the set of primes of $K$ that are finitely decomposed in $\Kinf$.
For $\Kinf=K^{\cyc}$, all primes are finitely decomposed, i.e., $\Sigma$ consists of all primes.

Consider the case when $K$ is an imaginary quadratic field and $\Kinf = \Kan$.
Let $\ell$ be a rational prime and $v|\ell$ be a prime of $K$.
Then, there are finitely many primes $w$ of $\Kan$ lying above $v$ if and only if either $\ell=p$ or $\ell$ splits in $K$ (see for example \cite{Bri07}).
Thus, by the Cheboratev density theorem, $\Sigma$ consists of $50\%$ of the primes of $K$.
\end{Remark}

Henceforth, we fix a number field $K$ and a $\Z_p$-extension $\Kinf$ over $K$.
Let $E_{/K}$ be an elliptic curve with good ordinary reduction at $p$.
We now define our module of interest, the $p$-primary Selmer group over $\Kinf$.
Let $S$ be a finite set of primes $v$ of $K$ \emph{containing} the primes above $p$ and the primes of bad reduction of $E$. 
Denote by $K_S$ the maximal algebraic extension of $K$ which is unramified at the primes $v\notin S$.
For any finite extension $F$ of $K$ such that $F\subseteq K_S$ and a prime $v\in S$, define
\[
J_v(E/F) := \prod_{w|v} H^1\left( F_w, E\right)[p^\infty]
\]
where the product is over all primes $w$ of $F$ lying above $v$.
The \emph{$p$-primary Selmer group over $F$} is defined as follows
\[
\Sel(E/F):=\ker\left\{ H^1\left(K_S/F,E[p^{\infty}]\right)\longrightarrow \bigoplus_{v\in S} J_v(E/F)\right\}.
\]
The $p$-primary Selmer group $\SelpK$ is the direct limit of $\Sel(E/F)$ as $F$ ranges over all finite extensions of $K$ contained in $\Kinf$.
The $\mu$ and $\lambda$-invariants of $\SelpK$ are denoted by $\mupK$ and $\lambdapK$.
We make the following assumption.
\begin{assumption}
\label{fg}
$\SelpK$ is a cotorsion $\Lambda$-module with $\mupK=0$.
\end{assumption}
\begin{Remark}
\label{cotorsion + mu=0 remark}
Such an assumption is expected to hold in the special cases of interest.
\begin{enumerate}
\item Consider the case when $E$ is an elliptic curve defined over $\Q$ and $\Kinf=K^{\cyc}$ for an abelian extension $K/\Q$.
The $p$-primary Selmer group $\SelpK$ is a cotorsion $\Lambda$-module; see \cite{Kat04}.
When $E$ is an elliptic curve for which the residual Galois representation $\overline{\rho}:\Gal(\closure{\Q}/\Q)\rightarrow \GL(\F_p)$ is irreducible, it is conjectured by Greenberg that the $\mu(E/\Q^{\cyc})=0$ (see \cite[Conjecture 1.11]{Gre99}).
\item Consider the case when $E_{/\Q}$ is an elliptic curve of conductor $N$, $K$ is an imaginary quadratic number field, and $\Kinf = \Kan$.
Write $N=N^+ N^-$, where $N^+$ (resp. $N^-$) is divisible by primes that split (resp. are inert) in $K$.
Let $\overline{\rho}:\op{G}_{\Q}\rightarrow \GL(\F_p)$ be the residual Galois representation on the $p$-torsion points $E[p]$.
Suppose that the following conditions are satisfied
\begin{enumerate}
 \item $\overline{\rho}$ is surjective,
 \item if $q$ is a prime such that $q|N^{-}$ and $q\equiv \pm 1 \mod{p}$, then $\overline{\rho}$ is ramified at $q$,
 \item the number of primes dividing $N^-$ is odd.
 \item the $p$-th Fourier coefficient $a_p\not \equiv \pm 1\pmod{p}$.
\end{enumerate}
Then \cite[Theorem~5.3]{PW11} (see also \cite[Remark 1.4]{KPW17}) asserts that Assumption~\ref{fg} holds.
\end{enumerate}
\end{Remark}

\section{Preliminary Results}
As before, let $p\geq 5$ be a prime number and $K$ be a number field.
Let $L$ be a finite Galois extension of $K$ with the order of the Galois group $G=\Gal(L/K)$ a power of $p$.
Let $\Kinf$ be a $\Z_p$-extension of $K$ and $\Linf=\Kinf\cdot L$.
The field diagram is drawn below
\begin{equation*}
\begin{tikzpicture}[node distance = 1.5cm, auto]
 \node(Q) {$\Q$.};
 \node (K) [node distance= 0.75cm, above of =Q]{$K$};
 \node (L) [node distance= 1cm, above of=K, left of=K] {$L$};
 \node (Kinf) [above of=K, right of =K] {$\Kinf$};
 \node (Linf) [above of=L, right of=L] {$\Linf$};
 \draw[-] (Q) to node {} (K);
 \draw[-] (K) to node {{\tiny G}} (L);
 \draw[-] (K) to node [swap]{{\tiny $\mathbb{Z}_p$}} (Kinf);
 \draw[-] (L) to node {} (Linf);
 \draw[-] (Kinf) to node {} (Linf);
 \end{tikzpicture}\hspace{1.0cm}
 \end{equation*}
In the two examples of special interest, we know the following.
\begin{enumerate}
 \item When $K$ is a number field and $\Kinf=K^{\cyc}$ is the \emph{cyclotomic} $\Z_p$-extension of $K$, $L_\infty$ is identified with the cyclotomic $\Z_p$-extension of $L$, namely $L^{\cyc}$.
 \item When $K$ is an imaginary quadratic field and $\Kinf= K^{\ac}$ is an \emph{anticyclotomic} $\Z_p$-extension of $K$, the $\Z_p$-extension, $\Linf$ over $L$ is a non-cyclotomic extension (but it is not an anticyclotomic extension).
\end{enumerate}

In this section, we record some preliminary results required in the proof of Theorem~\ref{theorem1}.
For the remainder of this discussion, we assume the following.
\begin{assumption}\label{elliptic curve assumptions}
\begin{enumerate}
 \item $E$ has good ordinary reduction at the primes of $K$ above $p$.
 \item At least one of the following hold
 \begin{enumerate}
 \item $E$ does not have complex multiplication, 
 \item $E(L)[p]=0$.
 \end{enumerate}
\end{enumerate}
\end{assumption}
\noindent The assumption of $E(L)[p]=0$ is weaker than insisting that the residual representation $\overline{\rho}:\Gal(\closure{L}/L)\rightarrow \GL(\F_p)$ on $E[p]$ is irreducible.

\begin{Lemma}
\label{wo CM, finite over Zp extension}
Let $K$ be a number field and $E_{/K}$ be an elliptic curve without complex multiplication.
Let $\cL/K$ be any algebraic extension which is Galois over $K$.
Then, either
\begin{enumerate}[label=(\alph*)]
 \item $E(\cL)[p^\infty]=E[p^{\infty}]$ \emph{or}
 \item $E(\cL)[p^\infty]$ is finite.
\end{enumerate}
\end{Lemma}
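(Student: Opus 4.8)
The plan is to use the action of $\Gamma' := \Gal(\cL/K)$ on $V := E(\cL)[p^\infty]$, exploiting that $\cL/K$ is Galois so that $V$ is a $\Gamma'$-stable submodule of $E[p^\infty] \cong (\Q_p/\Z_p)^2$. First I would reduce to understanding the possible $\Gal(\closure{K}/K)$-stable submodules of $E[p^\infty]$: since $\cL$ is Galois over $K$, an element $\sigma \in \Gal(\closure{K}/K)$ maps $E(\cL)$ to $E(\sigma\cL) = E(\cL)$, hence $V = E(\cL)[p^\infty]$ is stable under the residual and full $p$-adic representation $\rho_{E,p}\colon \Gal(\closure{K}/K) \to \op{GL}(E[p^\infty]) = \op{GL}_2(\Z_p)$. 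So the statement becomes: any $\Gal(\closure{K}/K)$-submodule of $E[p^\infty]$ is either all of $E[p^\infty]$ or is finite. Equivalently, passing to the Tate module, the only $\Z_p[\Gal(\closure{K}/K)]$-submodules of $T_pE$ are $0$ and finite-index ones; I would phrase the divisible part of $V$ as $(T'\otimes \Q_p/\Z_p)$ for a $\Gal$-stable $\Z_p$-direct-summand $T' \subseteq T_pE$ and argue $T'$ has rank $0$ or $2$.

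The key step is ruling out a rank-one stable summand. Suppose $V$ has a nontrivial proper divisible part, i.e. $V_{\op{div}} \cong \Q_p/\Z_p$; then $\rho_{E,p}$ stabilizes a line $L_0 \subset V_p E \otimes \Q_p$, so the image of $\rho_{E,p}$ is contained (after conjugation) in a Borel subgroup of $\op{GL}_2(\Q_p)$. By a theorem of Serre (the image of $\rho_{E,p}$ is open in $\op{GL}_2(\Z_p)$ for non-CM $E$, using the openness of the $p$-adic image — \cite[]{Ser72} in spirit, but for the current purpose one only needs that the image is not contained in a Borel), this is impossible: an open subgroup of $\op{GL}_2(\Z_p)$ cannot be contained in a Borel, as it contains $\op{SL}_2(\Z_p)$-level subgroups which act irreducibly. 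Hence $V_{\op{div}}$ is either $0$ — in which case $V$ is a divisible-free submodule of $(\Q_p/\Z_p)^2$, so finite — or all of $E[p^\infty]$. I would present this by first treating the residual representation $\overline{\rho}$: if $\overline\rho$ is irreducible then $V[p] \in \{0, E[p]\}$ and an easy induction/Nakayama-type argument over the finite layers $E[p^n]$ gives $V \in \{0\text{-ish (finite)}, E[p^\infty]\}$; if $\overline\rho$ is reducible one still uses openness of the image of $\rho_{E,p}$ to see the relevant stable lattice cannot persist $p$-adically.

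The main obstacle I anticipate is handling the case where the residual representation $\overline\rho$ is reducible (so there is a $\Gal$-stable line mod $p$) but the full $p$-adic representation is still "large." Here one cannot argue purely mod $p$; the cleanest fix is to invoke that for non-CM $E$ the image of $\rho_{E,p}$ in $\op{GL}_2(\Z_p)$ is open (Serre), so if $E(\cL)[p^\infty]$ contained a copy of $\Q_p/\Z_p$ the image would preserve a $\Q_p$-line and thus lie in a Borel, contradicting openness. A more self-contained alternative, avoiding Serre's big-image theorem, is: if $E(\cL)[p^\infty]$ were infinite but not all of $E[p^\infty]$, its divisible part $D \cong \Q_p/\Z_p$ gives a $p$-divisible subgroup defined over $\cL$ but $\Gal(\closure{K}/K)$-stable, hence a $K$-rational sub-$p$-divisible-group of $E$; by the theory of $p$-divisible groups / isogenies this forces $E$ to admit CM or a rational isogeny of $p$-power degree ad infinitum, which is impossible by Mazur-type finiteness of isogenies — but this is heavier machinery, so I would prefer the Serre-openness route. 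I would close by noting the CM hypothesis enters exactly at the point of excluding the Borel/rank-one case, consistent with Assumption 3.1(2) in the excerpt where the CM case is instead controlled by assuming $E(L)[p]=0$.
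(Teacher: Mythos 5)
Your proposal is correct and follows essentially the same route as the paper: both reduce to observing that $V_p\bigl(E(\cL)[p^\infty]\bigr)$ is a $\Gal(\closure{K}/K)$-subrepresentation of $V_p(E)$ (here using that $\cL/K$ is Galois, which you usefully make explicit), and both then invoke a theorem of Serre for non-CM elliptic curves to conclude this subrepresentation is $0$ or everything. The only cosmetic difference is that you invoke Serre's open-image theorem and phrase the exclusion as ``the image cannot lie in a Borel,'' whereas the paper cites directly the irreducibility of $V_p(E)$ (\cite[IV, Theorem 2.1(a)]{Ser97}), which is the weaker statement you correctly note is all that is needed; your extra digressions about residual reducibility and a $p$-divisible-group alternative are unnecessary for the argument.
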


\begin{proof}
Set $M = E(\cL)[p^\infty]$ and $T_p(M) = \varprojlim_n M[p^n]$.
Note that $V_p(M) := T_p(M)\otimes_{\Z_p} \Q_p$ is a submodule of $V_p(E) := T_p(E)\otimes_{\Z_p} \Q_p$ where $T_p(E)$ is the (usual) Tate module of $E$.
Since $E$ is an elliptic curve without complex multiplication, by a theorem of Serre (see \cite[IV, Theorem~2.1(a)]{Ser97}) we know that $V_p(E)$ (hence $V_p(M)$) is irreducible.
Thus, $V_p(M)$ is either trivial or equal to $V_p(E)$.
Since $\dim V_p(M)=\corank_{\Z_p} M$, it follows that $V_p(M)=0$ if and only if $M$ is finite.
But, $E[p^{\infty}]$ is cofree, so $V_p(M)=V_p(E)$ if and only if $M=E[p^{\infty}]$.
This completes the proof of the lemma.
\end{proof}

\begin{Cor}\label{finiteness corollary}
Let $K$ be a number field and $K_{\infty}$ be a $\Z_p$-extension of $K$.
Let $E_{/K}$ be an elliptic curve.
Assume that either of the following conditions are satisfied
\begin{enumerate}
 \item $E$ does not have complex multiplication,
 \item $E(K)[p]=0$.
\end{enumerate}
Then, $E(K_{\infty})[p^{\infty}]$ is finite.
\end{Cor}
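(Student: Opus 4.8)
The plan is to split into the two cases of the hypothesis and reduce each to Lemma \ref{wo CM, finite over Zp extension}. First suppose $E$ has no complex multiplication. Apply Lemma \ref{wo CM, finite over Zp extension} with $\cL = K_\infty$ (which is indeed Galois over $K$, being a $\Z_p$-extension): we conclude that $E(K_\infty)[p^\infty]$ is either all of $E[p^\infty]$ or finite. So the only thing to rule out is $E(K_\infty)[p^\infty] = E[p^\infty]$. If this held, then $\Gal(\overline{K}/K_\infty)$ would act trivially on $E[p^\infty]$, hence the image of $\Gal(\overline{K}/K)$ in $\op{Aut}(E[p^\infty]) \cong \GL(\Z_p)$ would factor through $\Gal(K_\infty/K) \cong \Z_p$. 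But for a non-CM elliptic curve, Serre's open image theorem forces this image to be open in $\GL(\Z_p)$, in particular non-abelian (for $p$ odd it contains an open subgroup of $\op{SL}_2(\Z_p)$), contradicting that it is a quotient of $\Z_p$. Hence case (a) of the lemma is impossible and $E(K_\infty)[p^\infty]$ is finite.

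Now suppose instead that $E(K)[p] = 0$. Here I would argue directly using a co-descent/fixed-point argument along the $\Z_p$-tower. Write $\Gamma = \Gal(K_\infty/K)$ and $\Gamma_n = \Gal(K_\infty/K_n)$, so $\Gamma/\Gamma_n$ is cyclic of order $p^n$. Set $M = E(K_\infty)[p^\infty]$, a discrete $p$-primary $\Gamma$-module, and note $M = \varinjlim_n E(K_n)[p^\infty]$ with $E(K_n)[p^\infty] = M^{\Gamma_n}$. It suffices to show $M[p]$ is finite, since $M$ is a cofinitely generated $\Z_p$-module sitting inside $E[p^\infty] \cong (\Q_p/\Z_p)^2$, so finiteness of $M[p]$ forces $M$ finite. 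The key point is that $M[p]$ is a finite-dimensional $\F_p$-vector space (dimension $\le 2$) on which $\Gamma$, a pro-$p$ group, acts; hence if $M[p] \ne 0$ then $M[p]^{\Gamma} \ne 0$, i.e. $M[p]^{\Gamma_n} \ne 0$ for all $n$ and in fact $M[p]^{\Gamma} = E(K)[p] \ne 0$ once one checks $M[p]^\Gamma = (E(K_\infty)[p])^\Gamma = E(K)[p]$. This contradicts the hypothesis $E(K)[p]=0$, so $M[p]=0$ and a fortiori $M$ is finite (indeed trivial). The identification $M[p]^\Gamma = E(K)[p]$ is immediate: $M[p] = E(K_\infty)[p]$ and $M[p]^\Gamma = E(K_\infty)[p] \cap E(\overline{K})^\Gamma = E(K_\infty)[p] \cap E(K) = E(K)[p]$ since $K_\infty^\Gamma = K$.

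The main obstacle is really just making sure the two cases are handled by genuinely different mechanisms — the non-CM case needs Serre's theorem (already invoked via Lemma \ref{wo CM, finite over Zp extension}), while the $E(K)[p]=0$ case needs only the elementary fact that a nontrivial finite $p$-group action on a nonzero $\F_p$-vector space has nonzero fixed points, combined with the observation that the fixed field of all of $\Gamma$ is $K$ itself. In fact one can unify the presentation: in either case $E(K_\infty)[p^\infty]$ is a cofinitely generated $\Z_p$-module, so its finiteness is equivalent to the vanishing of its divisible part; Lemma \ref{wo CM, finite over Zp extension} kills the non-CM case, and the $p$-torsion fixed-point argument kills the other. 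No serious computation is involved; the only care needed is the routine verification that Galois invariants commute with the relevant intersections and direct limits along the tower.
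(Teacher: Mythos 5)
Your proof is correct and takes essentially the same approach as the paper: in the non-CM case you apply Lemma~\ref{wo CM, finite over Zp extension} and rule out $E(K_\infty)[p^\infty]=E[p^\infty]$ via Serre's open image theorem (the paper phrases the same contradiction as non-solvability of $K(E[p^\infty])/K$ versus the abelian tower $K_\infty/K$, rather than non-abelianness of an open subgroup of $\GL(\Z_p)$, but it is the same observation), and in the $E(K)[p]=0$ case you use that a pro-$p$ group acting on a nonzero finite $p$-torsion module has nonzero fixed points, which is exactly the content of the paper's citation of \cite[1.6.13]{NSW}, yielding $E(K_\infty)[p]=0$ and hence $E(K_\infty)[p^\infty]=0$. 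One cosmetic slip: the sentence ``finiteness of $M[p]$ forces $M$ finite'' is false as a standalone implication (for any cofinitely generated $\Z_p$-module $M$ the group $M[p]$ is automatically finite, e.g.\ $M=\Q_p/\Z_p$), but since your argument actually establishes $M[p]=0$, which does give $M=0$, the proof is unaffected.
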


\begin{proof}
\begin{enumerate}
 \item When $E$ does not have complex multiplication, it follows from Serre's big image theorem that the field $K(E[p^{\infty}])$ is not a solvable extension of $K$.
 Therefore, $E(K_{\infty})[p^{\infty}]\neq E[p^{\infty}]$.
 The assertion follows from Lemma~\ref{wo CM, finite over Zp extension}.
 \item Since $\Gamma\simeq \Z_p$ is pro-$p$, it follows that $E(K_{\infty})[p]=0$ (see for example \cite[I.6.13]{NSW}).
 Hence, $E(K_{\infty})[p^{\infty}]=0$.
\end{enumerate} 
\end{proof}

The following lemma is an analogue of \cite[Proposition~2.3]{HM99} in the current setting (see also \cite[Proposition~5.30]{How_thesis}).
In proving the following lemma, we crucially use the hypothesis introduced in Assumption~\ref{elliptic curve assumptions}(1) that $p$ has good ordinary reduction at $p$.
Note that Lemma~\ref{wo CM, finite over Zp extension} and Corollary~\ref{finiteness corollary} do not require this assumption.

\begin{Lemma}\label{analogue of HM 2.3}
Let $E$ be an elliptic curve with good ordinary reduction at the primes of $K$ above $p$.
Suppose that $\SelpK$ is $\Lambda$-cotorsion and $E(\Kinf)[p^\infty]$ is finite.
Then the map
\begin{equation}\label{surjectivity equation}
H^1\left( K_S/\Kinf, E[p^\infty]\right) \longrightarrow \bigoplus_{v\in S} J_v(E/\Kinf)
\end{equation}
is surjective.
Furthermore, $H^2\left( K_S/ \Kinf, E[p^\infty]\right)=0$.
\end{Lemma}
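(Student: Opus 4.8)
The plan is to adapt the classical argument of Greenberg, in the form used by Hachimori--Matsuno \cite{HM99}: both assertions are deduced simultaneously from a single $\Lambda$-corank computation combined with the Poitou--Tate duality sequences.

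First I carry out the corank bookkeeping. At each finite layer $K_n$, the global Euler--Poincaré characteristic formula for $E[p^\infty]$ over the number field $K_n$ (the hypothesis $p\geq 3$ enters here, to handle the real places) gives
\[
\corank_{\Z_p} H^1(K_S/K_n, E[p^\infty]) = [K_n:\Q] + \corank_{\Z_p} H^0(K_S/K_n, E[p^\infty]) + \corank_{\Z_p} H^2(K_S/K_n, E[p^\infty]),
\]
and $H^0(K_S/K_n, E[p^\infty]) = E(K_n)[p^\infty]$ is finite. Passing to the direct limit over $n$, dualizing, and using the finiteness of $E(\Kinf)[p^\infty]$ to keep the $H^0$-term bounded, this becomes an identity of $\Lambda$-coranks
\[
\corank_\Lambda H^1(K_S/\Kinf, E[p^\infty]) = [K:\Q] + \corank_\Lambda H^2(K_S/\Kinf, E[p^\infty]).
\]
Next I note that $\corank_\Lambda\big(\bigoplus_{v\in S} J_v(E/\Kinf)\big) = [K:\Q]$: for $v\nmid p$, local duality gives $H^2(K_{\infty,w},E[p^\infty])=0$ and each $H^1(K_{\infty,w},E)[p^\infty]$ is $\Lambda$-cotorsion, so such $v$ contribute $0$; whereas good ordinary reduction and the local Kummer sequence give $\corank_\Lambda J_v(E/\Kinf)=[K_v:\Q_p]$ for $v\mid p$, and $\sum_{v\mid p}[K_v:\Q_p]=[K:\Q]$. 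Inserting this into the defining exact sequence
\[
0 \longrightarrow \SelpK \longrightarrow H^1(K_S/\Kinf, E[p^\infty]) \xrightarrow{\ \phi\ } \bigoplus_{v\in S} J_v(E/\Kinf)
\]
and using that $\SelpK$ is $\Lambda$-cotorsion, we obtain $\corank_\Lambda H^1(K_S/\Kinf, E[p^\infty]) = \corank_\Lambda(\op{im}\phi) \leq [K:\Q]$; comparing with the Euler characteristic identity forces $\corank_\Lambda H^2(K_S/\Kinf, E[p^\infty]) = 0$ and $\corank_\Lambda(\coker\phi) = 0$. Thus both $H^2(K_S/\Kinf, E[p^\infty])$ and $\coker\phi$ are $\Lambda$-cotorsion.

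It remains to upgrade ``$\Lambda$-cotorsion'' to ``$0$''. Here I return to finite level. Since $H^0(K_S/K_n, T_pE) = T_p(E(K_n)[p^\infty]) = 0$ and, by local duality, $\bigoplus_{v\in S} H^2(K_{n,v}, E[p^\infty]) = 0$ (again using $p\geq 3$ for the real places), the Poitou--Tate nine-term exact sequence identifies $H^2(K_S/K_n, E[p^\infty])$ with the Pontryagin dual of a fine compact Selmer group contained in $H^1(K_S/K_n, T_pE)$, and the Cassels--Poitou--Tate sequence for the Selmer group identifies $\coker\phi_n$ with the Pontryagin dual of a compact Selmer group of $T_pE$, again a submodule of $H^1(K_S/K_n, T_pE)$. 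Taking inverse limits over $n$, both become $\Lambda$-submodules of $\varprojlim_n H^1(K_S/K_n, T_pE)$, which is $\Lambda$-torsion-free precisely because $H^0(K_S/\Kinf, T_pE) = T_p(E(\Kinf)[p^\infty]) = 0$ --- this is the one place where the finiteness of $E(\Kinf)[p^\infty]$ is indispensable. By the previous paragraph these submodules are $\Lambda$-torsion, hence zero; dualizing back yields $H^2(K_S/\Kinf, E[p^\infty]) = 0$ and $\coker\phi = 0$, that is, the map \eqref{surjectivity equation} is surjective.

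I expect the main obstacle to lie not in any single deep ingredient but in the local analysis at the primes $v\nmid p$ that are infinitely decomposed in $\Kinf$ --- a phenomenon absent from the cyclotomic setting of \cite{HM99}. For such $v$ one must verify that $J_v(E/\Kinf)$ is still $\Lambda$-cotorsion: its Pontryagin dual turns out to be of the form $\Lambda\otimes_{\Z_p}N$ for a finite $\Z_p$-module $N$, hence $\Lambda$-torsion (with trivial $\lambda$-invariant, though possibly nonzero $\mu$-invariant), so the corank count above is unaffected; and one must check that the Poitou--Tate and inverse-limit arguments go through when infinitely many primes lie above such $v$ in $\Kinf$, which they do, since at each finite layer $K_n$ only finitely many primes lie above $S$.
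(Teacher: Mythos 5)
Your proof takes a genuinely different route from the paper's. The paper works directly with the displayed Poitou--Tate sequence over $\Kinf$, reduces the claim to showing $\varprojlim_n \mathfrak{S}_p(E/K_n)=0$, and proves this via the exact sequence $0\to E(K_n)[p^\infty]\to \mathfrak{S}_p(E/K_n)\to \Hom_{\Z_p}(\Sel(E/K_n)^\vee,\Z_p)\to 0$, the Control Theorem, and Perrin-Riou's lemma identifying $\varprojlim_n\Hom_{\Z_p}\bigl((\SelpK^\vee)_{\Gamma_n},\Z_p\bigr)$ with $\Hom_\Lambda(\SelpK^\vee,\Lambda)^\iota$, which vanishes by cotorsion. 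You instead do an Euler--Poincar\'e corank count to conclude $H^2$ and $\coker\phi$ are $\Lambda$-cotorsion, and then kill them by embedding into the $\Lambda$-torsion-free module $\varprojlim_n H^1(K_S/K_n,T_pE)$; both approaches hinge on exactly the same two hypotheses (cotorsion of $\SelpK$, finiteness of $E(\Kinf)[p^\infty]$), but yours trades Perrin-Riou's lemma for the global Euler characteristic formula and the torsion-freeness of $H^1_{\mathrm{Iw}}$. One inaccuracy: from the Cassels--Poitou--Tate sequence one gets the short exact sequence $0\to H^2(K_S/K_n,E[p^\infty])^\vee\to \mathfrak{S}_p(E/K_n)\to (\coker\phi_n)^\vee\to 0$, so $(\coker\phi_n)^\vee$ is a \emph{quotient} of the compact Selmer group $\mathfrak{S}_p(E/K_n)\subset H^1(K_S/K_n,T_pE)$, not a submodule as you assert; the conclusion you want still follows, since once you know both $H^{2,\vee}$ and $(\coker\phi)^\vee$ are $\Lambda$-torsion, the extension $\varprojlim\mathfrak{S}_p$ itself is torsion, hence zero inside the torsion-free $H^1_{\mathrm{Iw}}$, whence both ends vanish --- but the reasoning needs that restructuring.
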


\begin{proof}
Recall the Cassels--Poitou--Tate sequence (see for example \cite[p.~9]{GCEC})
\begin{align*}
0 \rightarrow \SelpK &\rightarrow H^1\left(K_S/\Kinf, E[p^\infty] \right) \xrightarrow{\theta} \bigoplus_{v\in S} J_v(E/\Kinf)\\
&\rightarrow \left(\varprojlim_{n} \mathfrak{S}_p(E/K_n)\right)^{\vee} \rightarrow H^2\left(K_S/\Kinf, E[p^\infty] \right) \rightarrow 0.
\end{align*} 
Here $\mathfrak{S}_p(E/K_n) := \varprojlim_m \Sel_{p^m}(E/K_n)$, and the inverse limit is taken with respect to the maps induced by multiplication by $p$.
The inverse limit of $\mathfrak{S}_p(E/K_n)$ is taken with respect to the corestriction map.
In order to prove the result, it suffices to show that $\varprojlim_{n} \mathfrak{S}_p(E/K_n)=0$.

Consider the exact sequence (see \cite[Lemma~1.8]{GCEC})
\[
0 \rightarrow E(K_n)[p^\infty] \rightarrow \mathfrak{S}_p(E/K_n) \rightarrow \Hom_{\Z_p}\left( \Sel(E/K_n)^\vee, \Z_p\right) \rightarrow 0.
\]
Taking inverse limits, we have an exact sequence
\[
\varprojlim_n E(K_n)[p^\infty]\rightarrow \varprojlim_n \mathfrak{S}_p(E/K_n) \rightarrow \varprojlim_n \Hom_{\Z_p}\left( \Sel(E/K_n)^\vee, \Z_p\right).
\]
Here, the inverse limit $\varprojlim_n E(K_n)[p^\infty]$ is taken with respect to norm maps.
Since $E(\Kinf)[p^\infty]$ finite, it follows that $\varprojlim_n E(K_n)[p^\infty] =0$.
Therefore, we have an injection 
\[ \varprojlim_n \mathfrak{S}_p(E/K_n) \hookrightarrow \varprojlim_n \Hom_{\Z_p}\left( \Sel(E/K_n)^\vee, \Z_p\right).
\]
Set $\Gamma_n = \Gal\left( \Kinf/K_n\right)$.
By an application of the Control Theorem, (see \cite[Theorem~4.1]{Gre_PCMS} or \cite[Theorem~1]{Gre03}) we have an injection
\[
\varprojlim_n \mathfrak{S}_p(E/K_n) \hookrightarrow \varprojlim_n \Hom_{\Z_p}\left( \SelpK^{\vee}_{\Gamma_n}, \Z_p\right).
\]
The above injection requires that $\ker\left( \Sel(E/K_n) \rightarrow \Sel(E/\Kinf)^{\Gamma_n}\right)$ is finite (but not necessarily bounded).
For a $\Lambda$-module $M$, set $M^\iota$ for the $\Lambda$-module with the same underlying $\Z_p$-module and inverse $\Gamma$-action.
By \cite[\S 2 Lemme 4(i)]{PR87}), there is an isomorphism
\[
\varprojlim_n \Hom_{\Z_p}\left( \SelpK^{\vee}_{\Gamma_n}, \Z_p\right) \simeq \Hom_{\Lambda}\left( \SelpK^\vee, \Lambda\right)^{\iota}.
\]
Since $\SelpK$ is a cotorsion $\Lambda$-module, it follows that $\Hom_{\Lambda}\left( \SelpK^\vee, \Lambda\right)^{\iota}=0$.
Therefore, $\varprojlim_n \mathfrak{S}_p(E/K_n)=0$.
The assertion of the lemma follows.
\end{proof}

\section{Generalizations of Kida's formula}
In this section, we state a generalization of Kida's formula which applies to more general $\Z_p$-extensions.
Recall that $p\geq 5$ is a fixed prime number.
Let $K$ be a number field and $\Kinf$ be any $\Z_p$-extension over $K$ such that all primes above $p$ are ramified in $\Kinf$.
Let $L$ be a finite Galois extension of $K$ with Galois group $G=\Gal(L/K)$ of $p$-power order.
Let $E_{/K}$ be an elliptic curve such that the Assumption~\ref{elliptic curve assumptions} holds.
Recall that $S$ is a finite set of primes of $K$ \emph{containing} the primes above $p$ and the primes of bad reduction of $E$.
For our discussion on generalized Kida-type formula, we choose the set $S$ to be the primes $v$ in $K$ satisfying \emph{at least one} of the following conditions
\begin{enumerate}
 \item $v|p$,
 \item $E$ has bad reduction at $v$,
 \item $v$ ramifies in the extension $L/K$.
\end{enumerate}
Let $\Linf$ denote the composite $\Kinf \cdot L$.
We define two sets of primes $\cP_1$ and $\cP_2$ in $L_\infty$, which play a crucial role in the generalization of Kida's formula.
\begin{Defi}
\label{sets Pi}
For $i=1,2$, the set $\cP_i$ consists of primes $w'\nmid p$ in $\Linf$ such that
\begin{enumerate}
 \item $w'\in \cP_1$ if $E$ has split multiplicative reduction at $w'$,
 \item $w'\in \cP_2$ if \emph{all} of the following conditions are satisfied
 \begin{enumerate}
 \item $w'$ is ramified above a prime $v^\prime$ in $\Kinf$,
 \item $E$ has good reduction at $w'$
 \item $E(L_{\infty,w'})$ possesses a point of order $p$.
 \end{enumerate}
\end{enumerate}
\end{Defi}
Recall that $\Sigma$ denotes the set of primes of $K$ that are finitely decomposed in $\Kinf$.
We introduce an important assumption.
\begin{assumption}
\label{finitely decomposed}
Assume that each prime $w'\in \cP_1\cup \cP_2$ lies above a prime $v\in \Sigma$.
\end{assumption}
By assumption, any prime $w'\in \cP_1\cup \cP_2$ must lie above a prime $v\in S\cap \Sigma$.
The above assumption guarantees that the sets $\cP_i$ are finite for $i=1,2$.
Our choice of $\cP_1$ is the same as that in \cite{HM99}, and $\cP_2$ bears the additional condition that the primes $w'\in \cP_2$ are ramified over $\Kinf$.
This does not make an actual difference in terms in the formula relating the $\lambda$-invariants.

For any algebraic extension $\cL$ of $K$, and a set of primes $T$ of $K$, we write $T(\cL)$ to denote the set of primes of $w$ of $\cL$ such that $w|v$ for primes $v\in T$.
For a prime $v$ of $K$, we write $v(\cL)$ for the set of primes $w|v$ of $\cL$.

We record our first main result; its proof will occupy this section and the next.
\begin{Th}\label{main theorem}
For each prime $w'$ of $\Linf$, write $e_{w'}(\Linf/\Kinf)$ for the ramification index of $w'$ for the extension $\Linf/\Kinf$.
The $\lambda$-invariant of $\SelpL$ is given by the formula
\[\lambdapL=[L_{\infty}:K_{\infty}] \lambdapK+\sum_{w'\in \cP_1} \left(e_{w'}-1\right)+\sum_{w'\in \cP_2} 2\left(e_{w'}-1\right).\]
\end{Th}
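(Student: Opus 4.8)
The plan is to follow the strategy of Hachimori--Matsuno, comparing the characteristic ideals of $\SelpL$ and $\SelpK$ via a Riemann--Hurwitz-type argument on the Euler characteristics of the relevant cohomology groups. First I would establish the analogue of Assumption~\ref{fg} over $\Linf$: one needs to know that $\SelpL$ is a cotorsion $\Lambda_L$-module with $\mu(E/\Linf)=0$, where $\Lambda_L=\Z_p\llbracket\Gal(\Linf/L)\rrbracket$. This should follow from a standard descent argument, viewing $\SelpL$ as a module over $\Z_p\llbracket\Gal(\Linf/K)\rrbracket$ and using that $G=\Gal(L/K)$ is a $p$-group, so that $\mu=0$ and cotorsion both descend/ascend along the $G$-cover (the key input being Corollary~\ref{finiteness corollary} to control $E(\Linf)[p^\infty]$, plus Nakayama). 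Once this is in place, both $\SelpK$ and $\SelpL$ have well-defined $\lambda$-invariants and, crucially, the $\lambda$-invariant equals the $\Z_p$-corank of the Selmer group.

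The heart of the argument is a computation of $\lambda(E/\Linf)$ as an alternating sum of $\Z_p$-coranks of Galois cohomology groups, using the Hochschild--Serre spectral sequence for the extension $\Linf/\Kinf$ with group $G$. Since $|G|$ is a power of $p$ and the coefficient module $E[p^\infty]$ is $p$-primary, the higher cohomology of $G$ acting on the cohomology groups $H^i(K_S/\Linf,E[p^\infty])$ contributes only finite groups (or groups whose coranks I can track), and by Lemma~\ref{analogue of HM 2.3} applied over both $\Kinf$ and $\Linf$ one has $H^2(K_S/\Linf,E[p^\infty])=0$ and surjectivity of the global-to-local map. This reduces the computation to a local one: for each $v\in S$ and each place of $\Kinf$ above it, I must compare $\corank_{\Z_p}J_v(E/\Linf)$ with $[\Linf:\Kinf]\cdot\corank_{\Z_p}J_v(E/\Kinf)$. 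A local Euler-characteristic computation (as in \cite[Section 3]{HM99} or \cite{PW06}) shows the defect is exactly concentrated at the primes in $\cP_1$ (split multiplicative reduction, contributing $e_{w'}-1$ each) and $\cP_2$ (ramified places of good reduction where $E$ acquires a $p$-torsion point, contributing $2(e_{w'}-1)$ each); at places above $p$ the ordinary reduction hypothesis forces the local terms to match up after multiplying by the degree, and at places where $E$ has additive or non-split multiplicative reduction the local cohomology has corank zero on both sides. Assembling these local contributions via the global Euler characteristic formula and dividing through by the relevant indices yields the stated formula.

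I would organize the computation so that the global identity reads, schematically,
\[
\lambda(E/\Linf) - [\Linf:\Kinf]\,\lambda(E/\Kinf) = \sum_{v\in S\cap\Sigma}\Bigl(\corank_{\Z_p}J_v(E/\Linf) - [\Linf:\Kinf]\,\corank_{\Z_p}J_v(E/\Kinf)\Bigr),
\]
where the crossed terms coming from the first cohomology of the global Galois group cancel because $H^1(K_S/\Kinf,E[p^\infty])$ has $\Z_p$-corank (over $\Lambda$, after suitable localization) governed only by the Euler characteristic, which scales by $[\Linf:\Kinf]$ under the $p$-extension since $E(\Linf)[p^\infty]$ and $E(\Kinf)[p^\infty]$ are both finite. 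Then each summand on the right is evaluated by the local analysis above. Here Assumption~\ref{finitely decomposed} is essential: it guarantees the sets $\cP_1,\cP_2$ are finite, so the sums make sense, and it ensures that the only places contributing are those lying above primes $v\in\Sigma$, i.e. finitely decomposed primes, for which the local cohomology groups $J_v$ are cofinitely generated over $\Z_p$ rather than merely over $\Lambda$.

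\textbf{Main obstacle.} The delicate point, and the place where the non-cyclotomic setting genuinely differs from \cite{HM99}, is handling the local terms at infinitely decomposed primes and more generally controlling the $\Z_p$-corank (as opposed to $\Lambda$-rank) of the local factors $J_v(E/\Kinf)$ and $J_v(E/\Linf)$: in the cyclotomic case every prime is finitely decomposed and $J_v$ is always $\Z_p$-cofinitely generated with an explicit corank, whereas here I must lean on Assumption~\ref{finitely decomposed} to reduce to the finitely-decomposed case and separately argue that the infinitely-decomposed primes in $S$ either do not occur in $\cP_1\cup\cP_2$ by hypothesis, or contribute $\Lambda$-cotorsion local factors whose $\Z_p$-corank matches on both sides. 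Verifying the precise local corank identities at the ramified good-reduction primes (the $\cP_2$ contribution, with its factor of $2$ rather than $1$) via a careful Tate-module/inertia argument is the other computational crux.
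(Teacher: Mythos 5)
Your overall strategy matches the paper's: reduce to the local analysis via the Poitou--Tate sequence, establish $\Lambda$-cotorsion and $\mu(E/\Linf)=0$ by descent along the $G$-cover using the finite kernel/cokernel of $\alpha:\SelpK\to\SelpL^G$ and Nakayama (this is exactly Proposition~\ref{finite kernel and cokernel} and Corollary~\ref{cor: mu=0 for L}), then isolate the defect at the primes in $\cP_1\cup\cP_2$ using Assumption~\ref{finitely decomposed}. However, the key mechanism in the paper is different from, and more precise than, the ``corank difference'' identity you sketch. The paper first reduces to $\#G=p$ (a filtration of $L/K$ into degree-$p$ steps) and then applies Iwasawa's formula
\[
\lambdapL = p\,\lambdapK + (p-1)\,\ord_p\bigl(h_G(\SelpL)\bigr),
\]
where $h_G$ is the Herbrand quotient $\# H^2(G,-)/\#H^1(G,-)$. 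Since Lemma~\ref{analogue of HM 2.3} gives a \emph{short} exact sequence $0\to\SelpL\to H^1(K_S/\Linf,E[p^\infty])\to\bigoplus_v J_v(E/\Linf)\to 0$, the Herbrand quotient is multiplicative and the computation collapses to $h_G(H^1)=1$ (via $h_G(E(\Linf)[p^\infty])=1$) together with the local quantities $\ord_p h_G\bigl(E(L_{\infty,w'})[p^\infty]\bigr)$, which equal $-1$ on $\cP_1$ and $-2$ on $\cP_2$. These contributions arise from the Herbrand quotient of the \emph{finite} $G$-modules $E(L_{\infty,w'})[p^\infty]$, which is orthogonal to, and not captured by, a $\Z_p$-corank comparison of the local terms $J_v$ — the latter can have identical $\Z_p$-coranks on both sides while the Herbrand quotient still detects a discrepancy. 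Your schematic global identity, summing $\corank J_v(E/\Linf) - [\Linf:\Kinf]\corank J_v(E/\Kinf)$ over $v\in S\cap\Sigma$, is the right flavor but is neither the identity the paper proves nor obviously equivalent to it without supplying essentially the Herbrand-quotient reasoning; in particular, the cancellation you assert for $H^1(K_S/\Kinf,E[p^\infty])$ ``since the corank scales by degree'' needs to be replaced by the precise statement $h_G(H^1(K_S/\Linf,E[p^\infty]))=1$. You do correctly isolate the role of Assumption~\ref{finitely decomposed} (finiteness of $\cP_1\cup\cP_2$ and reduction to finitely decomposed local places) and you correctly attribute the factor of $2$ at $\cP_2$ to a Tate-module/inertia calculation; these match the paper.
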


Since $G$ is a $p$-group, it is solvable.
Let $L_0\subset L_1\subset \cdots \subset L_M$ be a filtration of $L$ such that $L_{i+1}$ is Galois over $L_i$ with Galois group $\Gal(L_{i+1}/L_i)\simeq \Z/p\Z$.
It is easy to show that it suffices to prove Theorem~\ref{main theorem} when $\#G=p$ (see for example \cite[Lemma~3.5]{Mat00}).
Without loss of generality, we assume from here on that $G\simeq \Z/p\Z$. 


\begin{Lemma}\label{boring lemma}
Let $K$ be a number field and $K_{\infty}$ be a $\Z_p$-extension of $K$.
Let $v\nmid p$ be a prime of $K$ and $v'|v$ a prime of $K_{\infty}$.
The following assertions hold
\begin{enumerate}
 \item if $v$ is finitely decomposed in $K_{\infty}$, then the localization $K_{\infty, v'}=K_v^{\cyc}$.
\item\label{lemma43c2} if there exists a Galois $p$-extension of $K_v$, then $K_{v}$ contains $\mu_p$.
\end{enumerate}
\end{Lemma}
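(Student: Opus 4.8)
The plan is to prove the two assertions separately, both relying on local class field theory.

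For part (1), I would argue as follows. Since $v \nmid p$, Lemma \ref{unramified at all non-p primes} tells us that $F_\infty/F$ is unramified at $v$, so the local extension $F_{\infty,v'}/F_v$ is an unramified (pro-$p$) extension. Now if $v$ is finitely decomposed in $F_\infty$, the decomposition group of $v'$ in $\Gal(F_\infty/F) \simeq \Z_p$ is a nontrivial closed subgroup, hence of the form $p^m\Z_p$ for some $m \geq 0$; in particular it is isomorphic to $\Z_p$, so $\Gal(F_{\infty,v'}/F_v) \simeq \Z_p$. But the maximal unramified pro-$p$ extension of the local field $F_v$ is itself a $\Z_p$-extension (topologically generated by Frobenius), and it is unique. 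Therefore $F_{\infty,v'}$ is \emph{the} unramified $\Z_p$-extension of $F_v$. Finally I would identify this with $F_v^{\cyc}$: the cyclotomic $\Z_p$-extension of $F_v$ is unramified at $v$ precisely because $v \nmid p$ (adjoining $p$-power roots of unity is unramified away from $p$), and by uniqueness of the unramified $\Z_p$-extension the two coincide. (One should note $F_v^{\cyc} \neq F_v$ since a local field always admits a nontrivial unramified $\Z_p$-extension, so there is no degenerate case.)

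For part (2), suppose there exists a Galois $p$-extension $N/F_v$, i.e.\ a nontrivial extension with $\Gal(N/F_v)$ a $p$-group; by taking a subextension we may assume $[N:F_v] = p$ and the extension is cyclic. By local class field theory, $F_v$ admits a cyclic extension of degree $p$ if and only if $F_v^\times/(F_v^\times)^p$ has order divisible by $p$, equivalently is nontrivial. The structure of $F_v^\times$ for a finite extension of $\Q_\ell$ with $\ell \neq p$ is $F_v^\times \simeq \Z \times \mu_{q-1} \times \Z_\ell^{d}$ (where $q$ is the residue cardinality and $d = [F_v:\Q_\ell]$), or more precisely $F_v^\times \cong \pi^{\Z} \times \mathcal{O}_{F_v}^\times$ with $\mathcal{O}_{F_v}^\times$ having prime-to-$p$ torsion part plus a pro-$\ell$ part. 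Actually the cleanest route: the number of degree-$p$ cyclic extensions is governed by $\dim_{\F_p} F_v^\times/(F_v^\times)^p$, and since $\ell \neq p$ the only contribution to $p$-torsion in $F_v^\times/(F_v^\times)^p$ beyond the uniformizer comes from $\mu_p \subseteq F_v$. Concretely, $F_v^\times/(F_v^\times)^p$ always has the $\Z/p$ coming from valuation; it has an \emph{extra} factor of $\Z/p$ exactly when $\mu_p \subset F_v$. Hmm — but this shows $F_v$ always has a degree-$p$ extension (the unramified one), which would make the statement vacuous. So the intended reading must be that $N/F_v$ is a \emph{ramified} or \emph{totally ramified} Galois $p$-extension, or that the statement is invoked in context where the unramified case is excluded. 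I would therefore state: a totally ramified Galois $p$-extension of $F_v$ exists only if $\mu_p \subset F_v$, by Kummer theory (a totally ramified $\Z/p$-extension is $F_v(\sqrt[p]{u})$ for a unit $u$, which requires $\mu_p \subset F_v$ for the extension to be Galois).

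The main obstacle I anticipate is pinning down the precise statement of part (2) — the literal reading is vacuous because every local field has an unramified $\Z_p$-extension, so one must interpret ``Galois $p$-extension'' in the context where it is applied (ramification of $L_\infty/K_\infty$ at $v'$, i.e.\ a ramified $p$-extension) and invoke Kummer theory: $\Gal(F_v(\sqrt[p]{u})/F_v) \simeq \Z/p$ as a Galois group forces $\mu_p \subset F_v$. Once the statement is read correctly, both parts are short applications of local class field theory and Kummer theory; the bookkeeping about decomposition groups in $\Z_p$ for part (1) is entirely routine.
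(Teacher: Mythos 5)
Your argument is correct, and for part (1) it follows exactly the paper's route: unramifiedness from Lemma~\ref{unramified at all non-p primes}, finite decomposition forcing the decomposition group to be a nontrivial closed (hence $\Z_p$-isomorphic) subgroup of $\Gamma$, and then uniqueness of the unramified pro-$p$ extension of a local field of residue characteristic prime to $p$. There is nothing to add there.

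For part (2) you have correctly spotted that the lemma is imprecisely stated: a local field $F_v$ of residue characteristic $\ell\neq p$ always has an unramified degree-$p$ (indeed $\Z_p$-) extension, so the hypothesis as written is vacuously satisfied and the literal statement is false. The fix you propose, reading ``Galois $p$-extension'' as ``ramified Galois $p$-extension,'' is the right one and is exactly the case that arises where the paper invokes the lemma: in the proof of Lemma~\ref{ep infinity}, the hypothesis $L_{\infty,w'}\neq K_{\infty,v'}$ forces $L_w/K_v$ to be ramified, since (for $v$ finitely decomposed) $K_{\infty,v'}$ already swallows the maximal unramified pro-$p$ extension of $K_v$. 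One small slip in your Kummer parenthetical: the totally ramified degree-$p$ extension is $F_v(\sqrt[p]{\pi})$ with $\pi$ a uniformizer (or any element of valuation prime to $p$), not $F_v(\sqrt[p]{u})$ with $u$ a unit --- the latter, when nontrivial, is unramified; the ramified one being \emph{Galois} is what requires $\mu_p\subset F_v$. The cleanest version is the class field theory dimension count you also sketch: $\dim_{\F_p} F_v^\times/(F_v^\times)^p$ equals $1$ when $\mu_p\not\subset F_v$ and $2$ when $\mu_p\subset F_v$, so an abelian $p$-extension beyond the unramified one exists only in the latter case. The paper's own proof of (2) is the single line ``a direct consequence of local class field theory,'' so your expansion supplies the content the paper leaves implicit and also flags a wording issue worth correcting.
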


\begin{proof}
\begin{enumerate}
 \item Since $v$ is finitely decomposed in $\Kinf$, we see that $K_{\infty,v'}$ is a $\Z_p$-extension of $K$.
 Since $v\nmid p$, 
 we know that $v$ is unramified in $\Kinf$ (see \cite[Proposition~13.2]{Was97}).
 By local class field theory, there is a unique unramified pro-extension of $K_v$ and therefore $K_{\infty,v'}=K_v^{\cyc}$.
 \item This statement is a direct consequence of local class field theory.
\end{enumerate}
\end{proof}

We keep the notation introduced at the start of this section.
Let $v$ (resp. $w$) be primes above $\ell$ in $K$ (resp. $L$) such that $w|v$.
Let $w'| w$ be a prime of $\Linf$ and $v':=w'_{\restriction \Kinf}$.
The diagram below depicts the labelling of primes
\begin{center}
 \begin{tikzpicture}[node distance = 1.5cm, auto]
 \node(Q) {$\ell.$};
 \node (K) [node distance = 0.75cm, above of =Q]{$v$};
 \node (L) [node distance = 1.0cm, above of=K, left of=K] {$w$};
 \node (Kinf) [above of=K, right of =K] {$v'$};
 \node (Linf) [above of=L, right of=L] {$w'$};
 \draw[-] (Q) to node {} (K);
 \draw[-] (K) to node {} (L);
 \draw[-] (K) to node {} (Kinf);
 \draw[-] (L) to node {} (Linf);
 \draw[-] (Kinf) to node {} (Linf);
 \end{tikzpicture}
 \end{center}
\begin{Lemma}\label{ep infinity}
Let $v\in S\setminus S_p$ and $w'\notin \mathcal{P}_1\cup \mathcal{P}_2$ be a prime of $L_{\infty}$ above $v$. 
Set $v'$ and $w$ be as in the diagram above.
Then at least one of the following assertions hold.
\begin{enumerate}
 \item\label{c1lemma51} $L_{\infty,w'}=K_{\infty,v'}$,
 \item\label{c2lemma51} $E(L_{\infty,w'})[p^{\infty}]=0$.
\end{enumerate}
\end{Lemma}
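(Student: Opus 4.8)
The plan is to analyze the local extension $L_{\infty,w'}/K_{\infty,v'}$ using the hypothesis that $w'\notin\cP_1\cup\cP_2$, splitting into cases according to whether $w'$ is ramified over $K_\infty$. If $w'$ is unramified over $v'$, then since $L_{\infty,w'}/K_{\infty,v'}$ is a $p$-extension (as $G$ is a $p$-group) contained in the unramified closure, and by Lemma~\ref{boring lemma}\eqref{lemma43c2} together with the structure of the absolute Galois group of the local field $K_{\infty,v'}$, one argues that either the extension is trivial --- giving \eqref{c1lemma51} --- or it is a nontrivial unramified $p$-extension. In the latter situation, the key point is that an unramified extension of local fields does not change the prime-to-$p$ part of the residue characteristic behavior, so $E(L_{\infty,w'})[p^\infty]$ can only be larger than $E(K_{\infty,v'})[p^\infty]$ if new $p$-torsion appears over the residue field; but one then shows directly that any such point of order $p$ on $E$ would force $w'$ into $\cP_2$ (if $E$ has good reduction) or reflect split multiplicative reduction landing $w'$ in $\cP_1$, contradicting $w'\notin\cP_1\cup\cP_2$. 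Hence \eqref{c2lemma51} must hold, i.e. $E(L_{\infty,w'})[p^\infty]=0$, using also that $E(K_{\infty,v'})[p^\infty]=0$ in this regime.

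The case $w'$ ramified over $v'$ is the substantive one. Here we cannot invoke \eqref{c1lemma51}, so we must prove \eqref{c2lemma51}. The strategy is: suppose toward a contradiction that $E(L_{\infty,w'})$ has a point of order $p$. Since $v\in S\setminus S_p$, the prime $w'$ does not lie over $p$, so $E$ has potentially good or potentially multiplicative reduction of residue characteristic $\ell\neq p$, and we examine the reduction type at $w'$. If $E$ has good reduction at $w'$, then all three conditions (a), (b), (c) in the definition of $\cP_2$ hold --- ramification over $K_\infty$, good reduction, and a point of order $p$ --- so $w'\in\cP_2$, contradiction. If $E$ has bad reduction at $w'$: multiplicative reduction with a $p$-torsion point rational over $L_{\infty,w'}$ forces the reduction to be split (by the description of torsion in the Tate curve / the component group), putting $w'\in\cP_1$, again a contradiction; and the additive case is ruled out because after the ramified $p$-power extension $L_\infty/K$ (and the $\Z_p$-extension $K_\infty$, which is a pro-$p$ extension) the reduction would have become semistable unless $p$ divides the relevant ramification degree controlling the semistability defect --- but that defect is prime to $p$ for $\ell\neq p$ when $p\geq 5$, so additive reduction persists and then $E(L_{\infty,w'})[p]=0$ by examining the formal group and component group, both of which have order prime to $p$.

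The main obstacle I expect is the bookkeeping in the bad-reduction sub-cases: one has to carefully track how the reduction type of $E$ changes along the tower $K_v\subseteq K_{\infty,v'}\subseteq L_{\infty,w'}$, and invoke the right input (the Néron--Ogg--Shafarevich criterion, the theory of the Tate curve for multiplicative reduction, and the fact that for $\ell\neq p$ the $p$-part of $E(\overline{K_v})[p^\infty]$ that can be captured rationally is governed by the tame quotient of inertia, whose pro-$p$ part injects into $\mathbb{Z}_p$). A clean way to organize this is to note that $L_{\infty,w'}/K_{\infty,v'}$ is a $\Z/p\Z$-extension (or trivial) for each step of the filtration, so one reduces to a degree-$p$ local extension and checks the claim there, then propagates. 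The hypothesis $p\geq 5$ (or at least $p\geq 3$, with $p\nmid(\ell-1)$-type conditions handled via the $\cP_i$) is what makes the component groups and semistability defects irrelevant mod $p$, and I would flag explicitly where each numerical hypothesis on $p$ is used. Once the local picture is pinned down, the conclusion --- that outside $\cP_1\cup\cP_2$ either the local extension is trivial or there is no $p$-torsion --- follows by assembling these cases.
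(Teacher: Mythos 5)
Your overall strategy — reduce to a degree-$p$ local extension, split by reduction type, and show that a surviving $p$-torsion point would place $w'$ in $\cP_1$ or $\cP_2$ — is the right skeleton, and your treatment of the \emph{ramified} case is essentially the paper's: good reduction at $w'$ plus a point of order $p$ satisfies all three clauses of $\cP_2$, and bad reduction is dispatched via \cite[Proposition 5.1]{HM99}. But there are two genuine gaps.

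First, your handling of the unramified case is wrong. You claim that if $L_{\infty,w'}/K_{\infty,v'}$ is a nontrivial \emph{unramified} $p$-extension and $E$ has good reduction at $w'$ with a $p$-torsion point, then $w'\in\cP_2$. This cannot be: condition~(a) in Definition~\ref{sets Pi} explicitly requires $w'$ to be \emph{ramified} above $v'$, so an unramified $w'$ is never in $\cP_2$, no matter what torsion exists. The way out is not to force $w'$ into $\cP_2$, but to observe that this sub-case is vacuous: if $E$ has good reduction at $w'$ and $v\in S\setminus S_p$, then either $E$ already had good reduction at $v$ (in which case $v\in S$ forces $v$ to be ramified in $L$, hence $w'/v'$ is ramified since $v$ is unramified in $\Kinf$ by Lemma~\ref{unramified at all non-p primes}), or $E$ had bad reduction at $v$ and good at $w'$, in which case the N\'eron--Ogg--Shafarevich criterion forces $L_{\infty,w'}/K_v$ to be ramified and hence $L_{\infty,w'}/K_{\infty,v'}$ to be ramified. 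So good reduction at $w'$ simply cannot coexist with the unramified hypothesis. The paper avoids this pitfall entirely by organizing the cases by reduction type and \emph{deriving} ramification within each, rather than splitting by ramification up front.

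Second, your claim that ``multiplicative reduction with a $p$-torsion point rational over $L_{\infty,w'}$ forces the reduction to be split'' is not correct as stated: a nonsplit torus over a finite field of order $q$ has $q+1$ rational points, and this can certainly be divisible by $p$. What makes the paper's argument go through is a fact you cite (Lemma~\ref{boring lemma}\eqref{lemma43c2}) but never actually deploy where it matters: since $L_w/K_v$ is a nontrivial Galois $p$-extension, local class field theory gives $\mu_p\subset K_v$. This is a hypothesis of \cite[Proposition 5.1(iii)]{HM99}, which is exactly what handles nonsplit multiplicative \emph{and} additive reduction simultaneously. Without first establishing $\mu_p\subset K_v$, neither your Tate-curve sketch nor your additive-case argument is complete; with it, both cases follow from a single citation. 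I'd recommend deriving $\mu_p\subset K_v$ at the very start (as the paper does) and then running the reduction-type casework, so that this input is available wherever it is needed.
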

\begin{proof}
Suppose that $L_{\infty,w'}\neq K_{\infty,v'}$, then it follows that $L_w$ is a nontrivial $p$-extension of $K_v$ which is Galois over $K_v$.
Therefore by local class field theory, $\mu_p$ is contained in $K_v$.
Since $v\in S\setminus S_p$, either $v$ is ramified in $L$ or $E$ has bad reduction at $v$.
If $E$ has good reduction at $v$, then $v$ is ramified in $L$.
Since $v\nmid p$, it is unramified in $\Kinf$ (see \cite[Proposition~13.2]{Was97}).
Therefore, $L_{\infty,w'}/ K_{\infty,v'}$ is a ramified extension.
Recall that $\mathcal{P}_2$ is the set of primes $w'$ of $L_{\infty}$ at which \emph{all} of the following assumptions are satisfied
\begin{enumerate}
 \item $L_{\infty,w'}/ K_{\infty,v'}$ is a ramified extension,
 \item $E$ has good reduction at $w'$ and $E(L_{\infty,w'})[p^{\infty}]\neq 0$.
\end{enumerate}
Note that since $E$ has good reduction at $v$, it also has good reduction at $w'$.
Since $L_{\infty,w'}/ K_{\infty,v'}$ is a ramified extension and $w'\notin \mathcal{P}_2$, it follows that $E(L_{\infty,w'})[p^{\infty}]=0$.

When $E$ has bad reduction at $v$, there are two possible subcases
\begin{enumerate}[label=(\roman*)]
 \item $E$ has good reduction at $w'$ \emph{or}
 \item $E$ has bad reduction at $w'$.
\end{enumerate}
If $E$ has good reduction at $w'$, it is easy to see from the Neron-Ogg-Shafarevich criterion that $L_{\infty,w'}/ K_v$ is ramified.
Since $v$ is unramified in $\Kinf$, it follows that $L_{\infty,w'}/ K_{\infty,v'}$ is a ramified extension.
In this setting the same argument as in the previous case applies.

One is left with the case when $E$ has bad reduction at $w'$.
Since $w'\notin \mathcal{P}_1$, we have by assumption that $E$ either has non-split multiplicative reduction or additive reduction at $w'$.
In this setting, \cite[Proposition~5.1 (iii)]{HM99} applies.
Since $p\geq 5$, it follows that $E(L_{\infty,w'})[p^{\infty}]=0$.
This result may be applied since $\mu_p$ is contained in $K_v$, which is one of the required hypotheses.
This has been shown to hold in the beginning of the proof.
\end{proof}

\begin{Lemma}\label{kergammav for non p}
Let $v$ be any prime is $S \setminus S_p$.
Then the kernel of the map
\[\gamma_v: J_v(E/\Kinf)\rightarrow J_v(E/\Linf)\] is finite.
\end{Lemma}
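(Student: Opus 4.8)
The plan is to reduce the statement to a purely local computation at the primes of $\Kinf$ above $v$. Unwinding the definition, $\gamma_v$ is the product, over the primes $v'$ of $\Kinf$ dividing $v$, of the restriction maps $H^1(K_{\infty,v'},E)[p^\infty]\to\prod_{w'|v'}H^1(L_{\infty,w'},E)[p^\infty]$, so $\ker\gamma_v$ is the product over such $v'$ of the kernels of these maps, and each of those kernels is contained in the kernel of restriction to a single chosen prime $w'|v'$ of $\Linf$. Since $v\nmid p$, the residue characteristic of $K_v$ is a prime $\ell\neq p$; hence $E(\mathcal{F})\otimes_\Z\Q_p/\Z_p=0$ for every algebraic extension $\mathcal{F}/K_v$, and the Kummer sequence supplies a natural isomorphism $H^1(\mathcal{F},E[p^\infty])\simeq H^1(\mathcal{F},E)[p^\infty]$. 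This lets me work throughout with the Galois module $E[p^\infty]$.

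Next, for a fixed prime $w'|v'$ I would run the inflation--restriction sequence for the group $G_{w'}:=\Gal(L_{\infty,w'}/K_{\infty,v'})$, which (as $\#G=p$) is either trivial or cyclic of order $p$, so that $L_{\infty,w'}/K_{\infty,v'}$ is Galois in either case. Using that $E(L_{\infty,w'})[p^\infty]$ is exactly the submodule of $E[p^\infty]$ fixed by the absolute Galois group of $L_{\infty,w'}$, this identifies the kernel of restriction with $H^1\big(G_{w'},E(L_{\infty,w'})[p^\infty]\big)$. That group is always finite: $G_{w'}$ is finite and $E(L_{\infty,w'})[p^\infty]\subseteq E[p^\infty]\cong(\Q_p/\Z_p)^2$ is a cofinitely generated $\Z_p$-module, and the cohomology of a finite group with cofinitely generated $\Z_p$-coefficients is finite. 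Thus $\ker\gamma_v$ embeds into a product, indexed by the primes $v'\mid v$ of $\Kinf$, of finite groups.

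The one genuine issue, and the step I expect to be the main obstacle, is that this product need not be a finite product: if $v$ is infinitely decomposed in $\Kinf$ there are infinitely many $v'\mid v$, so finiteness of the factors alone is not enough. To handle that case I would invoke Assumption \ref{finitely decomposed}: when $v\notin\Sigma$, no prime of $\Linf$ above $v$ lies in $\cP_1\cup\cP_2$, so Lemma \ref{ep infinity} applies to every $w'\mid v$ and forces either $L_{\infty,w'}=K_{\infty,v'}$, making $G_{w'}$ trivial, or $E(L_{\infty,w'})[p^\infty]=0$; in both situations $H^1\big(G_{w'},E(L_{\infty,w'})[p^\infty]\big)=0$, whence $\ker\gamma_v=0$. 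When $v\in\Sigma$ there are only finitely many $v'\mid v$, and since $[\Linf:\Kinf]=p$ only finitely many $w'\mid v'$, so $\ker\gamma_v$ is a finite product of finite groups. Either way $\ker\gamma_v$ is finite, which is the assertion.
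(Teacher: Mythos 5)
Your proposal is correct and follows essentially the same route as the paper's proof: both decompose $\ker\gamma_v$ into local $H^1(G_{w'},\,\cdot\,)$ terms via inflation--restriction, use the Kummer-sequence isomorphism to pass from $E$-coefficients to $E[p^\infty]$-coefficients (valid because $v\nmid p$), observe that each local factor is finite, and then split into the cases $v\in\Sigma$ (finitely many factors) and $v\notin\Sigma$ (Assumption~\ref{finitely decomposed} plus Lemma~\ref{ep infinity} force each local factor to vanish). The only cosmetic difference is that you phrase the $v\notin\Sigma$ dichotomy as ``$G_{w'}$ trivial or coefficients zero,'' while the paper reads off the same vanishing directly from equation~\eqref{trivial H1 in non p case}.
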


\begin{proof}
Recalling that $v\nmid p$ may be infinitely decomposed in $\Kinf/K$, i.e., the set $v(\Kinf)$ may be infinite.
At each prime $v' \in v(\Kinf)$, choose a prime $w'$ of $\Linf$ such that $w'|v'$.
It follows from the inflation-restriction sequence that 
\begin{equation}
\label{ker gammav}
\ker \gamma_v \subseteq \prod_{v'\in v(\Kinf)}H^1\left(L_{\infty,w'}/K_{\infty,v'}, E(L_{\infty,w'})\right)[p^\infty], 
\end{equation}
the right hand side of which may possibly be an infinite product.
Since $v'\nmid p$, we know that (see for example \cite[Theorem~2.4(i)]{Gre_PCMS})
\[
E(K_{\infty,v'})\otimes \Q_p/\Z_p=0.\]
The following isomorphism is immediate from the Kummer sequence,
\begin{equation}
\label{kummer sequence iso}
H^1\left(L_{\infty,w'}/K_{\infty,v'}, E(L_{\infty,w'})\right)[p^\infty] \simeq H^1\left(L_{\infty,w'}/K_{\infty,v'}, E(L_{\infty,w'})[p^\infty]\right).
\end{equation}
Since $E(L_{\infty,w'})[p^\infty]$ is a cofinite $\Z_p$-module and $G= \Gal(L/K)$ is finite, the right hand side of \eqref{kummer sequence iso} is a finite group.
To prove the lemma, we consider two cases\newline
\emph{Case 1:} $v\in \Sigma$.
By definition of $\Sigma$, the set $v(\Kinf)$ is finite.
The result is immediate.\newline
\emph{Case 2:} $v\notin \Sigma$.
Assumption~\ref{finitely decomposed} implies that $w'\notin \mathcal{P}_1\cup \mathcal{P}_2$.
By Lemma~\ref{ep infinity} we have that
\begin{equation}\label{trivial H1 in non p case}
H^1\left(L_{\infty,w'}/K_{\infty,v'}, E(L_{\infty,w'})[p^\infty]\right)=0.
\end{equation}
Therefore, $\ker\gamma_v$ is zero when $v\nmid \Sigma$.
\end{proof}

\begin{Lemma}\label{kergammav for p}
Let $v\in S_p$ and consider the restriction map 
\[\gamma_v: J_v(E/\Kinf)\rightarrow J_v(E/\Linf).\] 
The kernel of $\gamma_v$ is trivial.
\end{Lemma}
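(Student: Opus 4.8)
The plan is to follow the strategy of the proof of Lemma~\ref{kergammav for non p}, substituting for the input $E(K_{\infty,v'})\otimes\Q_p/\Z_p=0$ (used there for $v'\nmid p$) its analogue at the primes above $p$; this analogue is available because $E$ has good ordinary reduction and the localizations at $v$ of $\Kinf$ and $\Linf$ are deeply ramified over $\Q_p$. First I would run the inflation--restriction argument exactly as in the proof of Lemma~\ref{kergammav for non p}: choosing for each $v'\in v(\Kinf)$ a prime $w'\mid v'$ of $\Linf$, one obtains
\[
\ker\gamma_v\ \subseteq\ \prod_{v'\in v(\Kinf)} H^1\!\left(L_{\infty,w'}/K_{\infty,v'},\,E(L_{\infty,w'})\right)[p^\infty].
\]
The crucial simplification at $v\mid p$ is that this product is \emph{finite}: by the standing hypothesis of this section all primes above $p$ are ramified in $\Kinf$, so the inertia subgroup of $v'$ in $\Gamma\cong\Z_p$ is nonzero, hence open, whence the decomposition group of $v'$ is open in $\Z_p$ and $v(\Kinf)$ is finite. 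It then remains to prove that each factor is finite. Fixing such $v'$ and $w'$ and setting $G':=\Gal(L_{\infty,w'}/K_{\infty,v'})$ --- a quotient of $G\cong\Z/p\Z$, so cyclic of order dividing $p$ --- I may assume $G'\cong\Z/p\Z$ (otherwise the factor vanishes); then $H^1(G',E(L_{\infty,w'}))$ is annihilated by $p$.

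The heart of the argument is the local input. Since $v\mid p$ is ramified and finitely decomposed in $\Kinf$, the extension $K_{\infty,v'}/K_v$ is a ramified $\Z_p$-extension, so $L_{\infty,w'}$ is a deeply ramified extension of $\Q_p$; as $E$ has good ordinary reduction at $w'$ by Assumption~\ref{elliptic curve assumptions}, the theory of deeply ramified extensions (see \cite{Gre_PCMS}) shows that $\widehat{E}(\mathfrak{m}_{L_{\infty,w'}})$ is $p$-divisible. Feeding this, together with the fact that $\widetilde{E}(\kappa_{w'})$ is a torsion group (the residue field $\kappa_{w'}$ being algebraic over $\F_p$), into the exact sequence $0\to\widehat{E}(\mathfrak{m}_{L_{\infty,w'}})\to E(L_{\infty,w'})\to\widetilde{E}(\kappa_{w'})\to 0$, I would conclude $E(L_{\infty,w'})\otimes_{\Z}\Q_p/\Z_p=0$. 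Now set $M:=E(L_{\infty,w'})$, $M_0:=M[p^\infty]$ and $Q:=M/M_0$. Then $Q$ is $p$-torsion-free, and $Q\otimes\Q_p/\Z_p$ is a quotient of $M\otimes\Q_p/\Z_p=0$, so $Q$ is uniquely $p$-divisible; hence $p$ acts invertibly on each $H^i(G',Q)$ while simultaneously killing it for $i\geq 1$, forcing $H^1(G',Q)=0$. The cohomology sequence of $0\to M_0\to M\to Q\to 0$ then yields a surjection $H^1(G',M_0)\twoheadrightarrow H^1(G',M)$, and since $G'$ is cyclic, $H^1(G',M_0)$ is a subquotient of $M_0=E(L_{\infty,w'})[p^\infty]$, which is a cofinitely generated $\Z_p$-module. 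Thus $H^1(G',M_0)$ is cofinitely generated over $\Z_p$ and killed by $p$, hence finite; so is $H^1(G',M)$. Running over the finitely many $v'\in v(\Kinf)$ completes the proof.

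The main obstacle I anticipate is the local input of the second paragraph: invoking the Coates--Greenberg deeply ramified machinery with its hypotheses correctly verified --- that $E$ has good ordinary reduction at $w'$, and that $L_{\infty,w'}/\Q_p$ is genuinely deeply ramified, which is exactly where the hypothesis that $v$ ramifies in $\Kinf$ is used. Everything downstream is formal manipulation with the cohomology of the cyclic group $G'$.
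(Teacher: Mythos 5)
Your proof is correct, and it arrives at the required finiteness, but it follows a somewhat different route from the paper's, which is worth spelling out. The paper proves the stronger statement that each local cohomology group $H^1(L_{\infty,w'}/K_{\infty,v'}, E(L_{\infty,w'}))$ \emph{vanishes}: it applies the formal group sequence $0\to\cF(\mathfrak{m}_{w'})\to E(L_{\infty,w'})\to\widetilde{E}(l_{w'})\to 0$ as you do, but then invokes the full strength of \cite[Theorem 3.1]{CG96} to get $H^i\bigl(G,\cF(\mathfrak{m}_{w'})\bigr)=0$ for $i=1,2$ (not merely $p$-divisibility of the formal group), and Lang's theorem on the cohomological triviality of connected commutative algebraic groups over finite fields to get $H^i\bigl(G,\widetilde{E}(l_{w'})\bigr)=0$; vanishing of the middle term is then immediate, and there is no need to discuss whether $v(\Kinf)$ is finite. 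You use lighter local inputs --- $p$-divisibility of the formal group over a deeply ramified field, and the fact that $\widetilde{E}(\kappa_{w'})$ is torsion --- and compensate with a module-theoretic argument (passage to the uniquely $p$-divisible quotient $Q$, then cofinite generation of $M[p^\infty]$) to reduce to finiteness of $H^1(G',E(L_{\infty,w'})[p^\infty])$; in exchange you do need to observe that $v\mid p$ is finitely decomposed in $\Kinf$, which you correctly derive from the standing ramification hypothesis. Both arguments are sound; the paper's gives the sharper conclusion (zero kernel) and is shorter, while yours avoids Lang's theorem and the full Coates--Greenberg vanishing at the cost of a more elaborate homological reduction.
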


\begin{proof}
Recall that each of the primes $v\in S_p$ is assumed to be ramified in $\Kinf$.
It follows that if $v'|v$ is a prime in $\Kinf$, the extension $K_{\infty, v'}/K_v$ is \textit{deeply ramified} in the sense of Coates--Greenberg (see \cite[p.~130]{CG96}).
For each $v'\in v(\Kinf)$, choose a prime $w'$ of $\Linf$ such that $w'|v'$.
We have the isomorphism
\[
\ker \gamma_v \simeq \bigoplus_{v'\in v(\Kinf)}H^1\left(L_{\infty,w'}/K_{\infty,v'}, E(L_{\infty,w'})\right)[p^\infty].
\]
We will show that $H^1\left(L_{\infty,w'}/K_{\infty,v'}, E(L_{\infty,w'})\right)=0$ for $v'\in v(\Kinf)$.

Let $\cF$ denote the formal group attached to $E$.
Let $\mathfrak{m}_{w'}$ denote the maximal ideal of (the ring of integers of) $L_{\infty,w'}$ and $l_{w'}$ be the residue field.
Let $\widetilde{E}$ denote the reduction of $E$ modulo $\mathfrak{m}_{w'}$.
Consider the short exact sequence of $G$-modules (see \cite[p.~124]{Sil09})
\[
0 \rightarrow \cF(\mathfrak{m}_{w'}) \rightarrow E(L_{\infty,w'}) \rightarrow \widetilde{E}(l_{w'}) \rightarrow 0.
\]
Since $K_{\infty,v'}$ is a deeply ramified extension, it follows from \cite[Theorem~3.1]{CG96} that
\[
H^i\left( G, \cF(\mathfrak{m}_{w'})\right)=0 \quad \textrm{ for }i=1,2.
\]
A connected commutative algebraic group over a finite
field is cohomologically trivial (see result of S.~Lang in \cite[p.~204]{Maz72}).
Therefore,
\[
H^i\left( G, \widetilde{E}(l_{w'})\right)=0 \quad \textrm{ for }i=1,2.
\]
The lemma is now immediate.
\end{proof}

\begin{Proposition}\label{finite kernel and cokernel}
With setting as above, the kernel and cokernel of the restriction map 
\[\alpha: \SelpK\rightarrow \SelpL^{G}\]are finite.
\end{Proposition}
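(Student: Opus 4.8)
The plan is to compare the defining sequences for $\SelpK$ and $\SelpL$ via the restriction maps on global and local Galois cohomology, and to use the snake lemma together with the finiteness results already established. Concretely, I would write down the commutative diagram with exact rows
\[
\begin{CD}
0 @>>> \SelpK @>>> H^1\left(K_S/\Kinf, E[p^\infty]\right) @>>> \bigoplus_{v\in S} J_v(E/\Kinf) \\
@. @VV{\alpha}V @VV{\beta}V @VV{\gamma=\oplus\gamma_v}V \\
0 @>>> \SelpL^{G} @>>> H^1\left(K_S/\Linf, E[p^\infty]\right)^{G} @>>> \left(\bigoplus_{v\in S} J_v(E/\Linf)\right)^{G}
\end{CD}
\]
where the top row is exact and, by Lemma \ref{analogue of HM 2.3}, the top map $H^1(K_S/\Kinf,E[p^\infty])\to\bigoplus_v J_v(E/\Kinf)$ is surjective. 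The bottom row is exact (the Selmer group over $\Linf$ is by definition the kernel of the localization map, and taking $G$-invariants is left exact). Applying the snake lemma gives an exact sequence
\[
0\to\ker\alpha\to\ker\beta\to\ker\gamma\to\coker\alpha\to\coker\beta,
\]
so it suffices to prove that $\ker\beta$, $\ker\gamma$, and $\coker\beta$ are finite.

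For $\ker\gamma$: since $\ker\gamma\subseteq\bigoplus_{v\in S}\ker\gamma_v$, and Lemmas \ref{kergammav for non p} and \ref{kergammav for p} say each $\ker\gamma_v$ is finite (over $v\nmid p$ and $v\mid p$ respectively) while $S$ is finite, $\ker\gamma$ is finite. For $\ker\beta$ and $\coker\beta$: by the inflation–restriction exact sequence,
\[
0\to H^1\!\left(G, E(\Linf)[p^\infty]\right)\to H^1\!\left(K_S/\Kinf,E[p^\infty]\right)\xrightarrow{\ \beta\ } H^1\!\left(K_S/\Linf,E[p^\infty]\right)^{G}\to H^2\!\left(G,E(\Linf)[p^\infty]\right),
\]
so $\ker\beta\hookrightarrow H^1(G,E(\Linf)[p^\infty])$ and $\coker\beta\hookrightarrow H^2(G,E(\Linf)[p^\infty])$ (the latter using $H^2(K_S/\Linf,E[p^\infty])=0$, which follows from Lemma \ref{analogue of HM 2.3} applied over $\Linf$, noting Assumption \ref{elliptic curve assumptions} forces $E(\Linf)[p^\infty]$ finite by Corollary \ref{finiteness corollary} and that $\Sel(E/\Linf)$ is $\Lambda$-cotorsion — which is part of the standing hypotheses/Corollary \ref{cor: mu=0 for L}). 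Since $G$ is finite and, by Corollary \ref{finiteness corollary}, $E(\Linf)[p^\infty]$ is finite, the Tate cohomology groups $H^i(G,E(\Linf)[p^\infty])$ are finite for all $i$. Hence $\ker\beta$ and $\coker\beta$ are finite, and the displayed snake sequence shows $\ker\alpha$ and $\coker\alpha$ are finite, as desired.

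The main obstacle — and the one point requiring genuine care rather than bookkeeping — is the local analysis packaged in Lemmas \ref{kergammav for non p} and \ref{kergammav for p}, namely controlling $\ker\gamma_v$ when $v$ is \emph{infinitely decomposed} in $\Kinf$, so that $J_v(E/\Kinf)$ is an infinite product of local cohomology groups. The finiteness of $\ker\gamma_v$ there is not automatic: it relies on Lemma \ref{ep infinity} (via Assumption \ref{finitely decomposed}, ensuring primes outside $\cP_1\cup\cP_2$ contribute nothing) for $v\nmid p$, and on deep ramification and Coates–Greenberg vanishing for $v\mid p$. Since those lemmas are already established in the excerpt, here I only need to invoke them; the remaining work is the purely formal diagram chase above, together with the observation that surjectivity of the top localization map (Lemma \ref{analogue of HM 2.3}) is what allows the snake lemma to terminate the sequence at $\coker\beta$ rather than leaving an uncontrolled tail.
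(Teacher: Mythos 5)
Your proof follows the paper's argument exactly: the same commutative diagram (with top row short exact by Lemma~\ref{analogue of HM 2.3}), the same snake-lemma reduction to $\ker\beta$, $\coker\beta$, $\ker\gamma$, and the same appeals to Lemmas~\ref{kergammav for non p}, \ref{kergammav for p} and the inflation--restriction sequence. The one flaw is in your parenthetical: to get $\coker\beta\hookrightarrow H^2\bigl(G,E(\Linf)[p^\infty]\bigr)$ you invoke $H^2\bigl(K_S/\Linf,E[p^\infty]\bigr)=0$, and you justify the required $\Lambda$-cotorsionness over $\Linf$ by citing Corollary~\ref{cor: mu=0 for L} --- but that corollary is itself deduced \emph{from} Proposition~\ref{finite kernel and cokernel}, so the reference is circular. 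Fortunately the hypothesis is superfluous: exactness of the five-term inflation--restriction sequence at $H^1\bigl(K_S/\Linf,E[p^\infty]\bigr)^G$ already identifies $\coker\beta$ with the image of the transgression map landing in $H^2\bigl(G,E(\Linf)[p^\infty]\bigr)$, which is finite because $G$ is finite and $E(\Linf)[p^\infty]$ is a cofinite $\Z_p$-module. Drop that parenthetical and your proof coincides with the paper's.
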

\begin{proof}
The map $\alpha$ fits into a diagram
\[
{\begin{tikzcd}[column sep = small, row sep = large]
0\arrow{r} & \SelpK \arrow{r}\arrow{d}{\alpha} & H^1\left( K_S/\Kinf, E[p^\infty]\right)\arrow{r} \arrow{d}{\beta} & \bigoplus_{v\in S} J_v(E/\Kinf)\arrow{r} \arrow{d}{\gamma} & 0\\
0\arrow{r} & \SelpL^G \arrow{r} & H^1\left( K_S/\Linf, E[p^\infty]\right)^G \arrow{r} &\bigoplus_{v\in S} J_v(E/\Linf).
\end{tikzcd}}\]
The map $\gamma$ decomposes into a direct sum of local maps
\[\gamma_v: J_v(E/\Kinf)\rightarrow J_v(E/\Linf).\]
By the snake lemma, it suffices to show the following
\begin{enumerate}
 \item the kernel and cokernel of $\beta$ are both finite,
 \item the kernel of each map $\gamma_v$ is finite (resp. trivial) when $v|\ell$ such that $\ell$ is finitely decomposed (resp. $\ell$ splits completely) in $\Kinf/K$.
\end{enumerate} 
By the inflation-restriction sequence, the map $\beta$ fits in an exact sequence
\[\begin{split}0 &\rightarrow H^1\left(G, E(\Linf)[p^{\infty}]\right)\rightarrow H^1\left( K_S/\Kinf, E[p^\infty]\right)\xrightarrow{\beta} H^1\left( K_S/\Linf, E[p^\infty]\right)^G \\
&\rightarrow H^2\left(G, E(\Linf)[p^{\infty}]\right)\rightarrow \dots\end{split}.\]
Since $E(\Linf)[p^{\infty}]$ is of cofinite type, the cohomology groups $H^i\left(G, E(L_{\infty})[p^{\infty}]\right)$ are finite for $i=1,2$.
Therefore, the kernel and cokernel of $\beta$ are both finite.
The result follows from the analysis of the local kernel (see Lemmas \ref{kergammav for non p} and \ref{kergammav for p}).
\end{proof}
The next corollary is a generalization of \cite[Corollary~3.4]{HM99}.

\begin{Cor}
\label{cor: mu=0 for L}
Let $E_{/K}$ be an elliptic curve such that $\SelpK$ is a $\Lambda$-cotorsion module with $\mupK=0$.
If $L/K$ is a Galois extension such that Lemma~\ref{finite kernel and cokernel} holds, then $\SelpL$ is a $\Lambda$-cotorsion module with $\mupL=0$.
\end{Cor}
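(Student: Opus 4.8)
The plan is to recast both the hypothesis and the conclusion in terms of finite generation over $\Z_p$, and then to transport this property across the map $\alpha$ furnished by Proposition~\ref{finite kernel and cokernel} by means of a Nakayama argument. Recall from the Structure Theorem that for a cofinitely generated cotorsion $\Lambda$-module $M$ one has $M^\vee \sim \bigl(\bigoplus_i \Lambda/(p^{m_i})\bigr)\oplus\bigl(\bigoplus_j \Lambda/(h_j(T))\bigr)$ with each $h_j$ distinguished; since $\Lambda/(h_j(T))$ is finitely generated over $\Z_p$ whereas $\Lambda/(p^{m_i})$ is not, and a pseudo-isomorphism has finite kernel and cokernel, we see that $\mu(M)=0$ if and only if $M^\vee$ is finitely generated as a $\Z_p$-module. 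Conversely, any $\Lambda$-module that is finitely generated over $\Z_p$ is automatically $\Lambda$-torsion with vanishing $\mu$-invariant. Thus the hypothesis is equivalent to the statement that $\SelpK^\vee$ is a finitely generated $\Z_p$-module, and it suffices to establish the same for $\SelpL^\vee$.

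First I would apply Proposition~\ref{finite kernel and cokernel}, which gives that the restriction map $\alpha\colon\SelpK\to\SelpL^{G}$ has finite kernel and cokernel. Since $\Z_p$ is Noetherian, submodules and quotients of finitely generated $\Z_p$-modules are again finitely generated; dualizing the two short exact sequences $0\to\ker\alpha\to\SelpK\to\op{im}\alpha\to 0$ and $0\to\op{im}\alpha\to\SelpL^{G}\to\op{coker}\alpha\to 0$ (in which $\ker\alpha$ and $\op{coker}\alpha$ are finite) then shows that $(\SelpL^{G})^\vee$ is finitely generated over $\Z_p$. Using the canonical isomorphism $(\SelpL^{G})^\vee\cong(\SelpL^\vee)_{G}$ identifying the Pontryagin dual of the $G$-invariants with the $G$-coinvariants of the dual, we conclude that $(\SelpL^\vee)_{G}$ is a finitely generated $\Z_p$-module.

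Next I would upgrade this to finite generation of $X:=\SelpL^\vee$ itself by exploiting the action of $G=\Gal(L/K)$. The module $X$ is a compact module over $\Z_p[G]$, and since $G$ is a finite $p$-group, $\Z_p[G]$ is a Noetherian local ring whose maximal ideal $\mathfrak m$ is generated by $p$ together with the augmentation ideal $I_G$, with finite residue field $\F_p$. One computes $X/\mathfrak m X\cong X_G/pX_G$, which is finite by the previous paragraph, so the topological (profinite) version of Nakayama's lemma forces $X$ to be finitely generated over $\Z_p[G]$, hence over $\Z_p$ because $\Z_p[G]$ is itself finitely generated as a $\Z_p$-module. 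Equivalently, one may work modulo $p$: $X/pX$ is a profinite module over the Artinian local ring $\F_p[G]$ whose coinvariants are finite, hence $X/pX$ is finite and $X$ is finitely generated over $\Z_p$ by the ordinary Nakayama lemma. In either case $\SelpL^\vee$ is finitely generated over $\Z_p$, which by the first paragraph means that $\SelpL$ is $\Lambda$-cotorsion with $\mupL=0$, as claimed.

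I do not expect a genuine obstacle; the one step deserving care is the Nakayama argument, where one must note that $\Z_p[G]$ (equivalently $\F_p[G]$) is local precisely because $G$ is a $p$-group, and invoke the compact/profinite form of Nakayama's lemma rather than its classical statement.
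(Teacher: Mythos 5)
Your proof is correct and follows essentially the same route as the paper's: translate $\mu=0$ plus cotorsion into finite generation over $\Z_p$, use the finite kernel and cokernel of $\alpha$ to control $(\SelpL^G)^\vee\cong(\SelpL^\vee)_G$, and then invoke Nakayama's lemma over the local ring $\Z_p[G]$. The only (harmless) difference is that you argue for a general finite $p$-group $G$, whereas the paper has already reduced to the case $G\simeq\Z/p\Z$ by dévissage.
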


\begin{proof}
Note that $\SelpK$ is a cotorsion $\Lambda$-module with $\mu=0$ if and only if it is cofinitely generated as a $\Z_p$-module.
It follows from Proposition~\ref{finite kernel and cokernel} that $\SelpL^G$ is also a cofinitely generated $\Z_p$-module.
Equivalently, the $G$-coinvariant of $\SelpL^{\vee}$ is finitely generated as a $\Z_p$-module.
Since $G\simeq \Z/p\Z$, the group ring $\Z_p[G]$ is local.
By Nakayama's Lemma, $\SelpL^{\vee}$ is finitely generated as a $\Z_p[G]$-module and hence finitely generated as a $\Z_p$-module.
This completes the proof of the lemma.
\end{proof}

Since Assumption~\ref{fg} holds, it follows from Proposition~\ref{finite kernel and cokernel} and Corollary~\ref{cor: mu=0 for L} that
\begin{align*}
 \lambdapK &= \corank_{\Z_p} \SelpK = \corank_{\Z_p} \SelpL^{G},\\
 \lambdapL &= \corank_{\Z_p} \SelpL.
\end{align*}

\begin{Defi}
Let $G$ be a $p$-group and $M$ be a divisible $\Z_p[G]$-module of cofinite type.
Then the \emph{Herbrand quotient} of $M$ is defined as
\[
h_G(M) = \frac{\# H^2(G,M)}{\# H^1(G,M)}.
\]
\end{Defi}
\noindent When $G=\Z/p\Z$ and $M = \SelpL$, it is straightforward to see from the work of Iwasawa that (cf. \cite[p.~589]{HM99})
\begin{equation}\label{lambdaL to lambdaK}
 \lambdapL = p\lambdapK + (p-1)\ord_p \left( h_G\left(\SelpL\right)\right).
\end{equation}

\section{Herbrand Quotient Calculations}

Let $E_{/K}$ be an elliptic curve.
Throughout this section, we suppose that Assumptions \ref{fg}, \ref{elliptic curve assumptions} and \ref{finitely decomposed} are satisfied.
It follows from \eqref{lambdaL to lambdaK} that to complete the proof of Theorem~\ref{main theorem}, we need to calculate the Herbrand quotient $h_G\left(\SelpL\right)$.
We carry out the calculation in this section, thereby completing the proof of the theorem.

\subsection{Simplification of the Herbrand quotient}
Corollary~\ref{cor: mu=0 for L} asserts that $\SelpL$ is $\Lambda$-cotorsion with $\mupL=0$.
Next, by Corollary~\ref{finiteness corollary} that $E(L_\infty)[p^\infty]$ is finite.
Using an argument identical to Lemma~\ref{analogue of HM 2.3}, the Cassels--Poitou--Tate sequence becomes
\[
0 \rightarrow \SelpL \rightarrow H^1\left(K_S/L_{\infty}, E[p^\infty] \right) \rightarrow \bigoplus_{v\in S} J_v(E/L_\infty) \rightarrow 0.
\]
It follows that
\[
h_G\left(\SelpL\right) = \frac{h_G\left(H^1(K_S/L_{\infty}, E[p^\infty] )\right)}{ \prod_{v\in S} h_G\left(J_v(E/L_\infty)\right)}.
\]
Let $v\in S\setminus S_p$.
Write $v^*(\Linf)$ for the set of primes $w'\in v(\Linf)$ satisfying both conditions
\begin{enumerate}
 \item $w'$ is a ramified prime above $v^\prime\in v(\Kinf)$.
 \item $w'\in \cP_1\cup \cP_2$.
\end{enumerate}
It follows from Assumption~\ref{finitely decomposed} that $v^*(\Linf)$ is finite.
Since $L_{\infty}$ is a Galois extension of $K$, it follows that either $v^*(L_{\infty})=\emptyset$ or $v^*(L_{\infty})=v(L_{\infty})$.

\begin{Lemma} The following assertions hold.
\begin{enumerate}
 \item The Herbrand quotient \[h_G\left(H^1\left(K_S/L_{\infty}, E[p^\infty] \right)\right)= h_G\left(E(L_{\infty})[p^{\infty}]\right)=1.\]
 \item The local Herbrand quotient is expressed as follows \[h_G\left(J_v(E/L_\infty)\right) =\prod_{w'\in v^*(\Linf)} h_G\left(E(L_{\infty,w'})[p^{\infty}]\right).\]
 In the above, the empty product is taken to be $1$.
\end{enumerate}

\end{Lemma}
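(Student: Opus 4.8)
The plan is to compute each of the two Herbrand quotients in turn. For part (1), I would use the short exact Poitou--Tate sequence for $L_\infty$ recorded just before the lemma,
\[
0 \rightarrow \SelpL \rightarrow H^1\left(K_S/L_{\infty}, E[p^\infty] \right) \rightarrow \bigoplus_{v\in S} J_v(E/L_\infty) \rightarrow 0,
\]
but the cleaner route is to go back one step and use the full inflation--restriction / Hochschild--Serre sequence relating cohomology over $L_\infty$ and over $\Kinf$. Since $\SelpK$ is $\Lambda$-cotorsion with $\mupK=0$, by Lemma~\ref{analogue of HM 2.3} we have $H^2\left(K_S/\Kinf, E[p^\infty]\right)=0$, and the analogous vanishing holds over $L_\infty$ by Corollary~\ref{finiteness corollary} together with the argument of Lemma~\ref{analogue of HM 2.3}. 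Hence $H^1\left(K_S/L_{\infty}, E[p^\infty]\right)$ has cohomological dimension considerations that let one identify, via the Hochschild--Serre spectral sequence for $1\to \Gal(K_S/L_\infty)\to \Gal(K_S/\Kinf)\to G\to 1$, the groups $H^i\bigl(G, H^1(K_S/L_\infty, E[p^\infty])\bigr)$ with $H^{i+1}\bigl(G, E(L_\infty)[p^\infty]\bigr)$ for $i\geq 1$ (the relevant $H^2(K_S/L_\infty,-)$ and $H^0$ pieces drop out because $H^1(K_S/\Kinf,-)$ surjects onto the $G$-invariants with the stated finite kernel). Since $G\simeq\Z/p\Z$ is cyclic, cohomology is $2$-periodic, so $h_G\bigl(H^1(K_S/L_\infty,E[p^\infty])\bigr)=h_G\bigl(E(L_\infty)[p^\infty]\bigr)$. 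Finally $E(L_\infty)[p^\infty]$ is finite by Corollary~\ref{finiteness corollary}, and a finite module over a cyclic group has Herbrand quotient $1$ (the standard fact that $h_G$ is trivial on finite modules); this gives the value $1$.

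For part (2), I would unwind the definition $J_v(E/L_\infty)=\prod_{w'\mid v}H^1(L_{\infty,w'},E)[p^\infty]$. Because $L_\infty/\Kinf$ is Galois, $G$ permutes the primes $w'$ above a fixed $v'\in v(\Kinf)$ transitively, so by Shapiro's lemma the contribution to $J_v$ from the $G$-orbit of a chosen $w'$ over $v'$ is $H^1\bigl(G,\,\mathrm{Ind}_{G_{w'}}^{G}H^1(L_{\infty,w'},E)[p^\infty]\bigr)\cong H^i\bigl(G_{w'},H^1(L_{\infty,w'},E)[p^\infty]\bigr)$, where $G_{w'}$ is the decomposition group. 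Then, exactly as in the proof of Lemma~\ref{kergammav for non p}, the Kummer sequence and the vanishing $E(K_{\infty,v'})\otimes\Q_p/\Z_p=0$ (from $v\nmid p$) identify the relevant local $H^1$ with $H^*\bigl(G_{w'}, E(L_{\infty,w'})[p^\infty]\bigr)$ up to finite error — more precisely one reduces $h_G$ of the local factor to $h_{G_{w'}}\bigl(E(L_{\infty,w'})[p^\infty]\bigr)$. By Lemma~\ref{ep infinity}, for $w'\notin\cP_1\cup\cP_2$ either $L_{\infty,w'}=K_{\infty,v'}$ (so the local cohomology of $G_{w'}$ is trivial and the factor is $1$) or $E(L_{\infty,w'})[p^\infty]=0$ (again the factor is $1$); and for $w'\in\cP_1\cup\cP_2$ that are not ramified over $\Kinf$ one checks the same vanishing or triviality. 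So only the primes in $v^*(\Linf)$ survive, and the product collapses to $\prod_{w'\in v^*(\Linf)}h_G\bigl(E(L_{\infty,w'})[p^\infty]\bigr)$, with the convention that an empty product is $1$ (this handles the dichotomy $v^*(L_\infty)=\emptyset$ or $=v(L_\infty)$ noted before the lemma). One should be careful that $h_G$ and $h_{G_{w'}}$ agree on the orbit-sum: since $G/G_{w'}$ indexes the orbit and Shapiro's lemma matches $H^i(G,-)$ of the induced module with $H^i(G_{w'},-)$ of the single factor, the Herbrand quotient of the full $G$-module $\prod_{w'\mid v}$ over a single $v'$ equals $h_{G_{w'}}$ of one factor, and when $G_{w'}=G$ (the ramified, non-split case that actually contributes) this is literally $h_G$.

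The main obstacle I anticipate is the bookkeeping in part (2): correctly passing between the global $G$-action permuting primes and the local decomposition-group cohomology via Shapiro's lemma, and making sure that the "finite kernel" discrepancies coming from the Kummer sequence (the groups $H^1(L_{\infty,w'}/K_{\infty,v'},E(L_{\infty,w'}))[p^\infty]$ vs.\ $H^1$ with $[p^\infty]$-coefficients, and the $E(K_{\infty,v'})\otimes\Q_p/\Z_p$ term) do not affect the Herbrand quotient — which holds because Herbrand quotients are multiplicative in short exact sequences and vanish on finite modules, but needs to be stated cleanly. Part (1), by contrast, is essentially formal once the $H^2$-vanishing from Lemma~\ref{analogue of HM 2.3} and its $L_\infty$-analogue are invoked, together with $2$-periodicity of cyclic-group cohomology and triviality of $h_G$ on the finite module $E(L_\infty)[p^\infty]$.
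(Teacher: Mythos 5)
Your plan reproduces, from scratch, the arguments that the paper obtains simply by citing [HM99, Lemmas 4.1--4.3] and then performing the very case analysis via Lemma~\ref{ep infinity} and Assumption~\ref{finitely decomposed} that you also carry out; so the overall route is the right one, but there are substantive gaps in part (2) and a small slip in part (1). In part (1), the Hochschild--Serre transgression for $1\to\Gal(K_S/\Linf)\to\Gal(K_S/\Kinf)\to G\to 1$ is $d_2^{i,1}\colon H^i\bigl(G,H^1(K_S/\Linf,E[p^\infty])\bigr)\to H^{i+2}\bigl(G,E(\Linf)[p^\infty]\bigr)$; together with $H^n(K_S/\Kinf,E[p^\infty])=0=H^n(K_S/\Linf,E[p^\infty])$ for $n\geq 2$ this is an isomorphism for $i\geq 1$, so the shift is by $2$, not by $1$ as you wrote. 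This is immaterial here because $G$ is cyclic of order $p$ and $E(\Linf)[p^\infty]$ is finite, but a shift by $1$ would invert the Herbrand quotient, so it should be stated correctly.

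In part (2) the core claim, that $h_G(J_v(E/\Linf))=\prod_{w'\in v(\Linf)}h_G\bigl(E(L_{\infty,w'})[p^\infty]\bigr)$, is precisely the content of [HM99, Lemmas 4.2 and 4.3] and cannot be waved through as "up to finite error." After Shapiro's lemma one must show, for each $w'$ with nontrivial decomposition group $G_{w'}$, that $h_{G_{w'}}\bigl(H^1(L_{\infty,w'},E[p^\infty])\bigr)=h_{G_{w'}}\bigl(E(L_{\infty,w'})[p^\infty]\bigr)$; the correct tool is the same spectral-sequence dimension shift as in part (1), run locally, which requires $H^2(L_{\infty,w'},E[p^\infty])=0$ and $H^2(K_{\infty,v'},E[p^\infty])=0$ (for $v\nmid p$ these follow from local Tate duality and the finiteness of $E(\cdot)[p^\infty]$, or from the drop in $p$-cohomological dimension over fields containing a $\Z_p$-extension of $K_v$). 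The Kummer computation you import from Lemma~\ref{kergammav for non p} only identifies $\ker\gamma_v$, not this Herbrand quotient. You also skip $v\in S_p$ entirely, even though the lemma is asserted for all $v\in S$; that case needs the deeply-ramified vanishing behind [HM99, Lemma~4.3], i.e.\ the Coates--Greenberg argument already deployed in Lemma~\ref{kergammav for p}. Finally, for $w'\in\cP_1\cup\cP_2$ unramified over $\Kinf$, "one checks the same vanishing or triviality" should be the explicit argument the paper gives: by Assumption~\ref{finitely decomposed} such $w'$ lies over $v\in\Sigma$, hence $K_{\infty,v'}=K_v^{\cyc}$ is the unique unramified pro-$p$ extension and unramifiedness forces $L_{\infty,w'}=K_{\infty,v'}$, making $G_{w'}$ trivial and the factor equal to $1$.
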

\begin{proof}
\begin{enumerate}
\item This assertion follows from \cite[Lemma~4.1]{HM99}.
\item By \cite[Lemmas 4.2 and 4.3]{HM99},
\[h_G\left(J_v(E/L_\infty)\right) =\prod_{w'\in v(\Linf)} h_G\left(E(L_{\infty,w'})[p^{\infty}]\right).\]
If $w'|p$, the Herbrand quotient $h_G\left(E(L_{\infty,w'})[p^{\infty}]\right)=1$ by \cite[Lemma~4.3]{HM99}, which generalizes verbatim to our setting.
Next, assume that $v\in S\setminus S_p$.
To prove the claim, we show that $h_G\left(E(L_{\infty,w'})[p^{\infty}]\right)=1$ for $w'\notin v^*(L_{\infty})$.
When $w'\notin v^*(L_{\infty})$, either of the following conditions is satisfied
\begin{enumerate}
 \item $L_{\infty,w'}/K_{\infty,v'}$ is unramified,
 \item $w'\notin \mathcal{P}_1\cup \mathcal{P}_2$.
\end{enumerate} 
If $w'\notin \mathcal{P}_1\cup \mathcal{P}_2$, then it follows from Lemma~\ref{ep infinity} that \[h_G\left(E(L_{\infty,w'})[p^{\infty}]\right)=0.\]
Therefore, assume that $w'\in \mathcal{P}_1\cup \mathcal{P}_2$.
Thus, $L_{\infty,w'}/K_{\infty,v'}$ must be an unramified extension.
By Assumption~\ref{finitely decomposed}, since $w'\in \mathcal{P}_1\cup \mathcal{P}_2$, the prime $v\in \Sigma$ and is finitely decomposed in $\Kinf$.
Since $v\nmid p$, it follows that $K_{\infty,v'}=K_v^{\op{cyc}}$, the unique unramified pro-$p$ extension of $K_v$.
Since $L_{\infty,w'}/K_{\infty,v'}$ is unramified, it follows that
\[
L_{\infty,w'}=K_{\infty,v'}=K_v^{\op{cyc}}.
\]
In this case, it is clear that the Herbrand quotient \[h_G\left(E(L_{\infty,w'})[p^{\infty}]\right)=1\] since the group $\op{Gal}(L_{\infty,w'}/K_{\infty,v'})$ is trivial.
\end{enumerate}
\end{proof}

\begin{Cor}\label{local herbrand}
With notation as above,
\[\ord_p \left(h_G\left(\SelpL\right)\right)=-\sum_{v\in S}\left(\sum_{w'\in v^*(\Linf)} \ord_p\left(h_G(E(L_{\infty,w'})[p^{\infty}])\right)\right).\]
\end{Cor}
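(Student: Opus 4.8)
The plan is to combine the multiplicativity of the Herbrand quotient over the short exact sequence just obtained with the local computations of the preceding Lemma. Recall that from Corollary~\ref{cor: mu=0 for L} and Corollary~\ref{finiteness corollary}, the module $E(L_\infty)[p^\infty]$ is finite, so the Poitou--Tate sequence collapses to
\[
0 \rightarrow \SelpL \rightarrow H^1\left(K_S/L_{\infty}, E[p^\infty] \right) \rightarrow \bigoplus_{v\in S} J_v(E/L_\infty) \rightarrow 0,
\]
and since the Herbrand quotient is multiplicative on short exact sequences of cofinite $\Z_p[G]$-modules (when all the quotients involved are defined, i.e. all the relevant cohomology groups are finite), this gives
\[
h_G\left(\SelpL\right) = \frac{h_G\left(H^1(K_S/L_{\infty}, E[p^\infty] )\right)}{ \prod_{v\in S} h_G\left(J_v(E/L_\infty)\right)}.
\]

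Next I would invoke the two assertions of the Lemma immediately above: the numerator $h_G\left(H^1(K_S/L_{\infty}, E[p^\infty])\right)$ equals $h_G\left(E(L_\infty)[p^\infty]\right)$, which is $1$ because $E(L_\infty)[p^\infty]$ is finite and $G$ is cyclic, so the Herbrand quotient of a finite module is trivial; and for each $v\in S$, the local factor satisfies $h_G\left(J_v(E/L_\infty)\right) = \prod_{w'\in v^*(\Linf)} h_G\left(E(L_{\infty,w'})[p^\infty]\right)$. Substituting these into the displayed formula for $h_G\left(\SelpL\right)$ yields
\[
h_G\left(\SelpL\right) = \frac{1}{\prod_{v\in S}\prod_{w'\in v^*(\Linf)} h_G\left(E(L_{\infty,w'})[p^\infty]\right)}.
\]
Taking $\ord_p$ of both sides and using $\ord_p(ab) = \ord_p(a) + \ord_p(b)$ together with $\ord_p(1/x) = -\ord_p(x)$ immediately produces the claimed identity
\[
\ord_p \left(h_G\left(\SelpL\right)\right) = -\sum_{v\in S}\left(\sum_{w'\in v^*(\Linf)} \ord_p\left(h_G(E(L_{\infty,w'})[p^{\infty}])\right)\right).
\]

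This is essentially a bookkeeping corollary, so there is no real obstacle; the only point requiring a word of care is ensuring the Herbrand quotients in the multiplicativity argument are all well-defined. This is exactly why the finiteness of $E(L_\infty)[p^\infty]$ (hence of $H^i(G,-)$ applied to the relevant cofinite modules, as used in the proof of Proposition~\ref{finite kernel and cokernel} and in the Lemma above) is needed: it guarantees that each $h_G$ appearing is a ratio of finite groups, so the multiplicative relation over the short exact sequence is legitimate. Granting that, the corollary follows by direct substitution as above.
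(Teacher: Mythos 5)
Your proof is correct and follows the same route the paper intends: the Corollary is a direct consequence of taking $\ord_p$ in the identity $h_G(\SelpL)=h_G(H^1(K_S/L_\infty,E[p^\infty]))/\prod_{v\in S}h_G(J_v(E/L_\infty))$ established in the ``Simplification'' subsection, and then substituting the two assertions of the preceding Lemma. One minor imprecision: the finiteness of $E(L_\infty)[p^\infty]$ comes from Corollary~\ref{finiteness corollary} alone, while Corollary~\ref{cor: mu=0 for L} is what supplies the $\Lambda$-cotorsion hypothesis needed (via the argument of Lemma~\ref{analogue of HM 2.3}) to make the Poitou--Tate map surjective; both inputs are used, just not quite in the way you phrased it.
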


\begin{proof}[Proof of Theorem~\ref{main theorem}]
Recall that it suffices to prove the theorem when $\# G=p$.
It is known that (see \cite[Corollary~5.2]{HM99}) 
\[\ord_p\left(h_G\left(E\left(L_{\infty,w'}\right)[p^{\infty}]\right)\right)=\begin{cases} -1\text{ if }w'\in \mathcal{P}_1,\\
-2\text{ if }w'\in \mathcal{P}_2.
\end{cases}\]
The result follows from \eqref{lambdaL to lambdaK} and Corollary~\ref{local herbrand}.
\end{proof}

\section{Elliptic Curves with Congruent Galois representations}
\label{section congruent galois representations}
In this section, we study the $p$-primary Selmer groups of $p$-congruent elliptic curves over (general) $\Z_p$-extensions.
We show that for two $p$-congruent elliptic curves, the $p$-primary Selmer group of one is a finitely generated $\Z_p$-module if and only if the same is true for the other elliptic curve (see Theorem~\ref{resisothm} for the precise statement).
Using this result, we can compare the $\lambda$-invariants of these Selmer groups (see Proposition~\ref{prop: compare lambdaS0}).
This is accomplished by introducing \emph{imprimitive Selmer groups} for general $\Z_p$-extensions.
Proposition~\ref{prop: compare lambdaS0} is the key tool which allows us to construct elliptic curves with large $\lambda$-invariants in Section \ref{section: large lambda invariant}.
\par 
Assume throughout this section that $p\geq 5$.
Consider elliptic curves $E_1$ and $E_2$ defined over \emph{any} number field $K$ such that
\begin{enumerate}
 \item both $E_1$ and $E_2$ have good ordinary reduction at the primes $v\in S_p$,
 \item the elliptic curves are $p$-congruent, i.e., there is a $\Gal(\closure{K}/K)$-equivariant isomorphism $E_1[p]\simeq E_2[p]$,
 \item $E_1(K)[p]=0$ (or equivalently $E_2(K)[p]=0$).
 Recall that this condition is automatically satisfied when the residual representation is irreducible.
\end{enumerate}

Since $E_i[p]=0$, by an application of Nakayama's lemma, $E_i(\Kinf)[p^{\infty}]=0$.
For $i=1,2$, let $N_i$ be the conductor of $E_i$ and let $\closure{N}_i$ be the Artin conductor of the residual representation $E_i[p]$.
Note that $\closure{N}_i$ divides $N_i$ for $i=1,2$.
Since $E_1$ and $E_2$ are $p$-congruent, we have that $\closure{N}_1=\closure{N}_2$.
Let $\Kinf$ be \emph{any} $\Z_p$-extension of $K$ and for $i=1,2$ denote by $\Sel(E_i/\Kinf)$ the $p$-primary Selmer group of $E_i$ over $\Kinf$.

\begin{Defi}
\label{def of Omega0}
Recall that we denote by $\mathfrak{T}$ the set of primes $v$ of $K$ at which either $E_1$ or $E_2$ (or both) have bad reduction.
For $i=1,2$, let $\Omega(E_i)$ be the set of primes $v\in \mathfrak{T}$ satisfying \emph{either} of the following conditions.

\begin{enumerate}
    \item\label{def 6.1 c1} The elliptic curve $E_i$ has good reduction at $v$ and \[ \#E_i(\F_v)[p]\neq 0,\]where $\F_v$ is the residue field at $v$.
    \item\label{def 6.1 c2} The elliptic curve $E_i$ has bad reduction at $v$.
    Furthermore, setting $K':=K(\mu_p)$, the elliptic curve $E_i$ has split multiplicative reduction at all primes $w\mid v$ of $K'$.
\end{enumerate}
\end{Defi}
\begin{Remark}
Note that the second condition above implies that if $K_v$ contains $\mu_p$, then, $E_i$ has split multiplicative reduction at $v$.
\end{Remark}

Let $\Omega_0$ denote the set $\Omega(E_1)\cup \Omega(E_2)$.
Since $E_i$ have good reduction at $v\in S_p$, note that $S_p\cap \Omega_0 = \emptyset$.
Write $\Omega$ for the set $\Omega_0\cup S_p$ and $\widetilde{\Omega}$ be the set of primes $v$ for which $v\in S_p$ or one of the elliptic curves $E_1$ or $E_2$ has bad reduction at $v$.
Thus, $\widetilde{\Omega}$ \emph{contains} the set $\Omega$.
Recall that $\Sigma$ is the set of primes of $K$ that are finitely decomposed in $\Kinf$.

For the remainder of the section we make the following assumption for $E_1$ and $E_2$.
\begin{assumption}
\label{finitely decomposed assumption}
For $i=1,2$, assume that $\Omega(E_i)\subset \Sigma$.
\end{assumption}
Throughout, we write $E$ to denote either of the elliptic curves, $E_1$ or $E_2$.
\begin{Lemma}\label{no finite index lemma}
Suppose that $\SelpK$ is $\Lambda$-cotorsion and that $E(\Kinf)[p^\infty]$ is finite.
Then $H^1(K_{\widetilde{\Omega}}/\Kinf, E[p^{\infty}])$ has no proper $\Lambda$-submodules of finite index.
\end{Lemma}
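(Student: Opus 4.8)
The plan is to show that the map is \emph{surjective} on corestriction (or equivalently, use the standard ``no finite index submodules'' criterion of Greenberg). The key input is a theorem of Greenberg asserting that if $\mathfrak{X} := H^1(K_{\widetilde\Omega}/K_\infty, E[p^\infty])^\vee$ is finitely generated over $\Lambda$ and has no nonzero pseudo-null submodule, and if in addition $H^2(K_{\widetilde\Omega}/K_\infty, E[p^\infty]) = 0$, then $\mathfrak{X}$ has no proper submodule of finite index (equivalently, $H^1$ has no proper $\Lambda$-submodule of finite index). So first I would reduce the statement to verifying these two hypotheses.

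For the vanishing of $H^2$, I would argue exactly as in Lemma~\ref{analogue of HM 2.3}: the Poitou--Tate sequence over the set $\widetilde\Omega$ together with the fact that $\Sel(E/K_\infty)$ is $\Lambda$-cotorsion and $E(K_\infty)[p^\infty]$ is finite gives $\varprojlim_n \mathfrak{S}_p(E/K_n) = 0$, hence $H^2(K_{\widetilde\Omega}/K_\infty, E[p^\infty]) = 0$. (Note $\widetilde\Omega \supseteq S_p$ and contains all bad primes of $E$, so it is a legitimate choice of $S$.) For the structural hypothesis, I would use the global Euler characteristic formula / weak Leopoldt-type arguments: $H^1(K_{\widetilde\Omega}/K_\infty, E[p^\infty])^\vee$ is finitely generated over $\Lambda$ because $\Sel(E/K_\infty)$ is $\Lambda$-cotorsion and the local terms $J_v(E/K_\infty)$ are cofinitely generated (using that $E(K_\infty)[p^\infty]$ is finite and the surjectivity in the Poitou--Tate sequence), and it has no nonzero pseudo-null submodule by the standard argument that $H^1$ of a cohomological-dimension-one-ish situation with divisible coefficients has this property — concretely, one invokes \cite[Proposition 4.14]{Gre_PCMS} or the analogous statement, which says $H^1(K_S/K_\infty, E[p^\infty])^\vee$ has no nonzero pseudo-null $\Lambda$-submodule whenever $H^2(K_S/K_\infty, E[p^\infty]) = 0$.

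I expect the main obstacle to be bookkeeping around the choice of prime set: the cited results of Greenberg are usually stated for the set $S$ of \emph{all} bad primes together with $S_p$, whereas here we work with $\widetilde\Omega$, which is precisely that set — so I would need to remark that $\widetilde\Omega$ satisfies the hypotheses on $S$ (contains $S_p$, contains all primes of bad reduction of $E_1$ and of $E_2$, hence of $E$), and that enlarging from a smaller admissible set to $\widetilde\Omega$ does not affect the relevant cohomology in degree $\geq 1$ with $E[p^\infty]$-coefficients. A secondary subtlety is that $K_\infty$ is a general $\Z_p$-extension where primes may be infinitely decomposed, so the local terms must be handled with the cofinite-generation statements already established (via finiteness of $E(K_\infty)[p^\infty]$), rather than by naively counting local factors; but all of this is already in place from the earlier lemmas, so the argument is essentially a citation-and-assembly job.
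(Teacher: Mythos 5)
Your argument matches the paper's: Lemma~\ref{analogue of HM 2.3}, applied with $S=\widetilde{\Omega}$ (an admissible set of primes), yields both the surjectivity of the global-to-local map and the vanishing $H^2(K_{\widetilde{\Omega}}/\Kinf,E[p^\infty])=0$, after which one cites Greenberg's structural result --- the paper cites \cite[Proposition~4.9]{Gre99} and observes that the cyclotomic argument carries over verbatim given these two inputs. One wording caution: your ``key input'' as stated is circular, because over $\Z_p\llbracket T\rrbracket$ the pseudo-null submodules of a finitely generated module are exactly its finite submodules, so ``$\mathfrak{X}$ has no nonzero pseudo-null submodule'' is already equivalent, by Pontryagin duality, to the conclusion that $H^1$ has no proper finite-index $\Lambda$-submodule (note also that ``$\mathfrak{X}$ has no proper finite-index submodule'' is the dual of a \emph{different} statement, namely that $H^1$ has no nonzero finite submodule); the substantive implication you in fact rely on, and state correctly lower down, is that $H^2=0$ forces $\mathfrak{X}$ to have no nonzero finite submodule.
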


\begin{proof}
When $\Kinf = K^{\cyc}$, the result is proven by Greenberg (see \cite[Proposition~4.9]{Gre99}).
The proof crucially uses the surjectivity of the restriction map \eqref{surjectivity equation}.
The argument generalizes verbatim.
\end{proof}

We recall an equivalent definition of the Selmer group due to Greenberg.
The $p$-primary Selmer group can also be defined as the kernel of the following \emph{global-to-local} map
\[
H^1( K_{\widetilde{\Omega}}/\Kinf, E[p^\infty])\rightarrow \prod_{v\in \widetilde{\Omega}}\cH_v(E/\Kinf).
\]
Here for each finite prime $v\in \widetilde{\Omega}\setminus S_p$, the local term is defined as follows
\[
\cH_v(E/\Kinf) = \prod_{v^\prime|v} H^1\left( {\Kinf}_{,v^\prime}, E[p^\infty]\right),
\]
where $K_{\infty, v'}$ is the union of all completions of number fields contained in $K_{\infty}$.
The local condition at primes $v\in S_p$ is more subtle.
Since $E$ has good ordinary reduction at all primes $v\in S_p$, its $p$-adic Tate module $T$ fits into a short exact sequence of $\op{G}_{K_v}$-modules
\[0\rightarrow T^+\rightarrow T \rightarrow T^-\rightarrow 0\] such that $T^{+}$ and $T^-$ are free of rank $1$ over $\Z_p$, and $T^-$ is unramified.
Identifying $E[p^{\infty}]$ with $T\otimes \Q_p/\Z_p$, write $D=T^-\otimes \Q_p/\Z_p$.
For $v\in S_p$, define
\[
\cH_v(E/\Kinf) = \bigoplus_{v^\prime|v} H^1\left( {\Kinf}_{,v^\prime}, E[p^\infty]\right)/L_{v^\prime}
\]
with 
\[
L_{v^\prime} = \ker\left( H^1\left( {\Kinf}_{,v^\prime}, E[p^\infty]\right) \rightarrow H^1\left( I_{v^\prime}, D\right)\right).
\]
Here $I_{v^\prime}$ denotes the inertia group at $v^\prime$.
This definition of the Selmer group is more useful to work with when proving results about congruent Galois representations.
The following result shows that the above definition of the Selmer group (due to Greenberg) matches the usual Selmer group.
Thus, we do not distinguish between them in this paper.

\begin{Lemma}
Let $v\in \widetilde{\Omega}$, then 
\[\mathcal{H}_v(E/\Kinf)=J_v(E/\Kinf).\]
\end{Lemma}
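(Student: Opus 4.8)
The statement is that Greenberg's definition of the Selmer group via the local conditions $\mathcal H_v(E/\Kinf)$ coincides with the classical one via $J_v(E/\Kinf) = \prod_{w|v} H^1(K_{\infty,w}, E)[p^\infty]$. Since both global Galois cohomology groups $H^1(K_S/\Kinf,E[p^\infty])$ and $H^1(K_{\widetilde\Omega}/\Kinf,E[p^\infty])$ are (after identifying the ambient groups) the same — one takes $S = \widetilde\Omega$ here — it suffices to check equality of the local terms prime by prime. The plan is therefore to split into the two cases in the definition: the primes $v \in \widetilde\Omega\setminus S_p$ away from $p$, and the primes $v\in S_p$.

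For $v\nmid p$, I would argue at each prime $w|v$ of $\Kinf$ that $H^1(K_{\infty,w},E)[p^\infty] = H^1(K_{\infty,w},E[p^\infty])$. This is immediate from the Kummer sequence $0\to E[p^\infty]\to E\xrightarrow{} E\to 0$ together with the vanishing $E(K_{\infty,w})\otimes\Q_p/\Z_p = 0$, which holds for $w\nmid p$ (as already used in the proof of Lemma~\ref{kergammav for non p}, citing \cite[Theorem 2.4(i)]{Gre_PCMS}); indeed the long exact sequence gives $H^1(K_{\infty,w},E[p^\infty]) \hookrightarrow H^1(K_{\infty,w},E)$ with image the $p$-divisible part, which is all of $H^1(K_{\infty,w},E)[p^\infty]$ since $H^1$ of a profinite group with divisible coefficients is torsion. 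Taking products over $w|v$ gives $\mathcal H_v(E/\Kinf) = J_v(E/\Kinf)$.

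For $v\in S_p$, the content is that the unramified-quotient local condition $L_{w}$ computing $\mathcal H_v$ agrees with the image $E(K_{\infty,w})\otimes\Q_p/\Z_p$ of the Kummer map that cuts out $J_v$ inside $H^1(K_{\infty,w},E[p^\infty])$ — equivalently, that $H^1(K_{\infty,w},E)[p^\infty] \simeq H^1(K_{\infty,w},E[p^\infty])/L_{w}$. This is the standard computation of Coates--Greenberg for deeply ramified extensions: since $E$ has good ordinary reduction at $v$ and $K_{\infty,w}/K_v$ is deeply ramified (each prime above $p$ being ramified in $\Kinf$, as used in Lemma~\ref{kergammav for p}), \cite[Proposition 4.3, Proposition 4.8]{CG96} identify $H^1(K_{\infty,w},E) [p^\infty]$ with $H^1(K_{\infty,w}, D)$ modulo the unramified part, and the kernel $L_{w}$ is precisely the preimage of $H^1(I_{w},D)$-triviality. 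I would cite these results and observe the argument is local so it runs verbatim over each completion $K_{\infty,w}$ of $\Kinf$ at $w|v$, then take the direct sum over $w|v$.

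The main obstacle — really the only non-formal point — is the $v\in S_p$ case: one must invoke the Coates--Greenberg deep ramification machinery correctly to see that the "ordinary" local condition defined via the quotient $D = T^-\otimes\Q_p/\Z_p$ matches the image of the Kummer map, and to confirm that their results, stated for a single deeply ramified local field, apply to the infinite completion $K_{\infty,w}$. The off-$p$ case is routine Kummer theory. I expect the proof to be short, with the bulk being a citation to \cite{CG96} and the remark that everything is purely local, hence unaffected by whether $v$ is finitely or infinitely decomposed in $\Kinf$.
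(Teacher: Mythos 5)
Your proposal is correct and follows essentially the same route as the paper: identify the local terms prime by prime, using the Kummer sequence together with the vanishing $E(K_{\infty,w})\otimes\Q_p/\Z_p=0$ for $w\nmid p$, and the Coates--Greenberg deep ramification machinery for $w\mid p$. The paper compresses both cases into citations of \cite[Propositions~2.1 and~2.4]{Gre99}, which are the exact facts you have unpacked.
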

\begin{proof}
Suppose that $v\notin S_p$.
Let $\ell\neq p$ be a rational prime and $L$ be \textit{any} algebraic extension of $\Q_\ell$.
Then, \cite[Proposition~2.1]{Gre99} asserts that $E(L)\otimes \Q_p/\Z_p=0$.
In particular, we have that $E(K_{\infty, v'})\otimes \Q_p/\Z_p=0$, for $v'|v$.
Thus, $\mathcal{H}_v(E/\Kinf)=J_v(E/\Kinf)$.
When $v\in S_p$, the result follows from \cite[Proposition~2.4]{Gre99}.
\end{proof}
We now  introduce the $\Omega_0$-imprimitive Selmer group.
It is a generalized Selmer group obtained by imposing conditions only at primes $v\in \widetilde{\Omega}\setminus \Omega_0$.

\begin{Defi}
The $\Omega_0$ \emph{imprimitive Selmer group} is defined as follows
\[
\Sel^{\Omega_0}(E/\Kinf) := \ker\left( H^1\left( K_{\widetilde{\Omega}}/\Kinf, E[p^\infty]\right)\xrightarrow{\theta_0} \bigoplus_{v\in \widetilde{\Omega}\setminus \Omega_0} \mathcal{H}_v(E/\Kinf)\right).
\]
\end{Defi}

\begin{Lemma}
\label{localmuzero}
Let $v\in \Omega_0$, then $\cH_v(E/\Kinf)$ is a cofinitely generated and cotorsion $\Lambda$-module with $\mu$-invariant equal to $0$.
Equivalently, it is a cofinitely generated $\Z_p$-module. 
\end{Lemma}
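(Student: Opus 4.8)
The plan is to reduce the assertion to the finiteness of a single mod-$p$ local cohomology group. Since $v\in\Omega_0=\Omega(E_1)\cup\Omega(E_2)$, Assumption~\ref{finitely decomposed assumption} places $v$ in $\Sigma$; in particular $v\nmid p$ and $v$ is finitely decomposed in $\Kinf$. Hence the product $\cH_v(E/\Kinf)=\prod_{v'\mid v}H^1(\Kinf_{,v'},E[p^\infty])$ is a \emph{finite} direct sum, and by Lemma~\ref{boring lemma} each completion $\Kinf_{,v'}$ coincides with $K_v^{\cyc}$, the unramified $\Z_p$-extension of $K_v$. So it suffices to prove that $H^1(K_v^{\cyc},E[p^\infty])$ is a cofinitely generated $\Z_p$-module; the equivalence with being a cofinitely generated cotorsion $\Lambda$-module with $\mu=0$ is then automatic, since a discrete $\Lambda$-module whose Pontryagin dual is finitely generated over $\Z_p$ is a fortiori finitely generated over $\Lambda$, $\Lambda$-torsion, and has vanishing $\mu$-invariant.

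To obtain cofinite generation over $\Z_p$ I would invoke the criterion that a discrete $p$-primary abelian group $M$ is cofinitely generated over $\Z_p$ exactly when $M[p]$ is finite: if $M[p]$ is finite, then $M^\vee/pM^\vee\cong (M[p])^\vee$ is finite, so topological Nakayama applied to the compact $\Z_p$-module $M^\vee$ shows that $M^\vee$ is finitely generated over $\Z_p$. Applying this with $M=H^1(K_v^{\cyc},E[p^\infty])$ and using the exact sequence $0\to E[p]\to E[p^\infty]\xrightarrow{p}E[p^\infty]\to 0$, the group $H^1(K_v^{\cyc},E[p^\infty])[p]$ is a quotient of $H^1(K_v^{\cyc},E[p])$, so it remains to show that $H^1(K_v^{\cyc},E[p])$ is finite.

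For this I would write $K_v^{\cyc}=\bigcup_n K_{v,n}$ as the union of its finite layers, each a finite extension of $\Q_\ell$ with $\ell\neq p$, so that $H^1(K_v^{\cyc},E[p])=\varinjlim_n H^1(K_{v,n},E[p])$ is a direct limit of finite groups; it then suffices to bound their orders uniformly. Since $\ell\neq p$, the local Euler characteristic formula over $K_{v,n}$ reads $|H^1(K_{v,n},E[p])|=|H^0(K_{v,n},E[p])|\cdot|H^2(K_{v,n},E[p])|$; by local Tate duality together with the self-duality $E[p]\cong\Hom(E[p],\mu_p)$ furnished by the Weil pairing, $|H^2(K_{v,n},E[p])|=|E(K_{v,n})[p]|$, whence $|H^1(K_{v,n},E[p])|=|E(K_{v,n})[p]|^2\le p^4$. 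A direct limit of finite groups of bounded order is finite, so $H^1(K_v^{\cyc},E[p])$ is finite, which completes the argument.

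The computations above are routine; the one point I would be careful about is that the passage from finiteness of the $p$-torsion to cofinite generation over $\Z_p$ really does use that $\cH_v(E/\Kinf)$ is a \emph{finite} sum of local terms, so that its Pontryagin dual is a genuine finitely generated compact $\Z_p$-module — and this is precisely where the hypothesis $v\in\Sigma$ (rather than merely $v\in\widetilde{\Omega}\setminus S_p$) is used. The extra content of $v\in\Omega_0$ beyond $v\in\Sigma$ plays no role here; it is needed only in the later comparison of $\lambda$-invariants.
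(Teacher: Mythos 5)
Your proof is correct and follows essentially the same standard route that the paper cites (the paper does not spell out the argument, only referring to Greenberg--Vatsal pp.\ 37--38): use $v\in\Omega_0\subset\Sigma$ and $v\nmid p$ to reduce to finitely many local terms over the unramified $\Z_p$-extension $K_v^{\cyc}$, then bound the $p$-torsion of each via the local Euler characteristic formula and Tate duality together with the Weil-pairing self-duality of $E[p]$.
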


\begin{proof}
We refer the reader to the standard argument on \cite[pp.~37--38]{GV00}.
\end{proof}
We now define the \emph{reduced} classical and imprimitive Selmer groups.
These are denoted by $\Sel(E[p]/\Kinf)$ and $\Sel^{\Omega_0}(E[p]/\Kinf)$, respectively.
Define
\[
\cH_v(E[p]/\Kinf) :=
\begin{cases}
\prod_{v^\prime|v} H^1\left( {\Kinf}_{,v^\prime}, E[p]\right) & \ \textrm{ if } v\in \widetilde{\Omega}\setminus S_p\\
\bigoplus_{v'|v} H^1(K_{\infty, v'}, E[p])/\overline{L}_{v'} & \ \textrm{ if } v\in S_p
\end{cases}
\]
where  \[\overline{L}_{v'}=\ker\left( H^1\left( {\Kinf}_{,v^\prime}, E[p]\right) \rightarrow H^1\left( I_{v^\prime}, D[p]\right)\right).\]
\begin{Defi}
The \emph{residual imprimitive Selmer group} is defined as follows
\[\Sel^{\Omega_0}(E[p]/\Kinf):=\op{ker} \left(H^1\left( K_{\widetilde{\Omega}}/\Kinf, E[p]\right)\xrightarrow{\overline{\theta}_0} \bigoplus_{v\in \widetilde{\Omega}\setminus \Omega_0} \cH_v(E[p]/\Kinf)\right).\]
\end{Defi}

\begin{Lemma}
\label{NSW lemma}
Let $G$ and $M$ be finite abelian groups of $p$-power order such that $G$ acts on $M$. Suppose that $M^G=0$, then $M=0$.
\end{Lemma}
\begin{proof}
The result follows from \cite[Proposition~1.6.12]{NSW}.
\end{proof}

\begin{Lemma}\label{no BS lemma}
Choosing an index $i\in \{1,2\}$, set $E:=E_i$.
Given a prime $v\in \tilde{\Omega}\backslash \Omega_0$, the natural map \[\iota_v: \cH_v(E[p]/\Kinf)\rightarrow \cH_v(E[p^{\infty}]/\Kinf)[p],\] induced by inclusion of $E[p]$ into $E[p^\infty]$, is injective.
\end{Lemma}
\begin{proof}
Recall that $\tilde{\Omega}$ is the set of primes $\mathfrak{T}\cup S_p$, where $\mathfrak{T}$ is the set of $v$ of $K$ at which either $E_1$ or $E_2$ or both have bad reduction.
The definition of $\Omega(E_i)$ is given in Definition \ref{def of Omega0}.
Since $v\nmid \Omega(E_i)$, we have a number of cases to consider.

\begin{enumerate}
    \item First, consider the case when $v\in S_p$.
    We consider the commutative square with injective horizontal maps
\[
\begin{tikzcd}[column sep = small, row sep = large]
 & \cH_v(E[p]/\Kinf) \arrow{r} \arrow{d}{\iota_v} & \bigoplus_{v'|v} H^1\left( I_{v^\prime}, D[p]\right) \arrow{d}{j_v} \\
 & \cH_v(E[p^{\infty}]/\Kinf)[p]\arrow{r} & \bigoplus_{v'|v} H^1\left( I_{v^\prime}, D\right)[p].
\end{tikzcd}\]
Since $D$ is unramified for primes $v\in S_p$, it follows that $H^0(I_{v'}, D)=D$ is divisible.
Thus, 
\[\ker(j_v) = H^0(I_{v'}, D)/p=0.
\] Therefore, $j_v$ is injective, and so is $\iota_v$.
\item Next, suppose that $v\in \mathfrak{T}$ is a prime at which $E$ has good reduction.
Since $v\notin \Omega_0$, and hence $v\notin \Omega(E)$, by assumption, we have that $E(\F_v)[p^\infty]=0$.
Since $v$ is a prime in $\mathfrak{T}$, it follows that $v\nmid p$ and thus the kernel of the reduction map
\[
E(K_v)\rightarrow E(\F_v)
\]
is pro-$\ell$ for $\ell\neq p$.
Hence, $E(\F_v)[p^\infty]=0$ implies that $E(K_v)[p^\infty]=0$.
Since $K_\infty/K$ is a pro-$p$ extension and $E(K_v)[p^\infty]=0$, it follows from Lemma~\ref{NSW lemma} that $E(K_{\infty,v'})[p^\infty]=0$ as well for any prime $v'|v$ of $K_\infty$.
Therefore, it follows from the Kummer sequence that the map $\iota_v$ must be injective.
\item Finally, consider the case when $v\in \mathfrak{T}$ is a prime at which $E$ has bad reduction.
Setting $K':=K(\mu_p)$, we denote by $K'_{\infty}$ the composite of $K_\infty$ with $K'$.
Since we assume that $v\notin \Omega(E)$, there is a prime $w|v$ of $K'$ such that $E$ does not have split multiplicative reduction at $w$.
Since $E$ has bad reduction at $v$, and $v$ is unramified in $K'$, it follows that $E$ has bad reduction at $w$.
Hence, we deduce that $E$ has either non-split multiplicative reduction, or additive reduction at $w$.
\par Choose any prime $v'$ of $K_\infty$ that lies above $v$, and consider the map 
\[
\alpha_{v'}:H^1(K_{\infty, v'}, E[p])\rightarrow H^1(K_{\infty, v'}, E[p^\infty])[p]
\]
which is induced by inclusion of $E[p]$ into $E[p^\infty]$.
Let $w'$ be the unique prime of $K'_{\infty}$ that lies above both $w$ and $v'$. Consider the map 
\[
\beta_{w'}:H^1(K'_{\infty, w'}, E[p])\rightarrow H^1(K'_{\infty, w'}, E[p^\infty])[p],
\]
also induced by inclusion of $E[p]$ into $E[p^\infty]$.
Note that these maps fit into a commutative square 
\[
\begin{tikzcd}[column sep = small, row sep = large]
 & H^1(K_{\infty, v'}, E[p]) \arrow{r} \arrow{d}{\alpha_{v'}} & H^1(K'_{\infty, w'}, E[p]) \arrow{d}{\beta_{w'}} \\
 & H^1(K_{\infty, v'}, E[p^\infty])[p]\arrow{r} & H^1(K'_{\infty, w'}, E[p^\infty])[p].
\end{tikzcd}\]
To show that $\iota_v$ is injective, it suffices to prove that $\alpha_{v'}$ is injective for all $v'|v$.
In the above diagram, the horizontal maps are restriction maps.
Since $K'/K$ has degree coprime to $p$, it follows that $K'_{\infty,w'}/K_{\infty,v'}$ also has degree coprime to $p$.
Therefore, from the inflation-restriction sequence, the horizontal restriction maps in the above diagram maps are injective.
As a consequence, if we show that $\beta_{w'}$ is injective, then it shall follow that $\alpha_{v'}$ is also injective.
Note that the kernel of $\beta_{w'}$ is $H^0(K'_{\infty,w'}, E[p^\infty])/p$.
Thus, it suffices to show that $ E(K'_{\infty,w'})[p^\infty]$ is $p$-divisible.
Since $K'$ contains $\mu_p$ and $p\geq 5$, it follows from \cite[Proposition~5.1]{HM99} that
\[
E(K'_{\infty,w'})[p^\infty]=0,
\] and the result follows from this.
\end{enumerate}
\end{proof}

\begin{Proposition}
\label{Kim 2.10}
Let $E$ be either $E_1$ or $E_2$ and assume that $E(K)[p]=0$.
Then,
\[
\Sel^{\Omega_0}(E[p]/\Kinf) \simeq \Sel^{\Omega_0}(E/\Kinf)[p].
\]
\end{Proposition}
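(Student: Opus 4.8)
The plan is to deduce the isomorphism from a diagram chase comparing the defining sequences of the two (imprimitive) Selmer groups via the multiplication-by-$p$ map on $E[p^\infty]$. First I would write down the commutative diagram with exact rows
\[
\begin{CD}
0 @>>> \Sel^{\Omega_0}(E[p]/\Kinf) @>>> H^1(K_{\widetilde{\Omega}}/\Kinf, E[p]) @>{\overline{\theta}_0}>> \bigoplus_{v\in \widetilde{\Omega}\setminus\Omega_0}\cH_v(E[p]/\Kinf)\\
@. @VVV @VV{g}V @VV{h}V\\
0 @>>> \Sel^{\Omega_0}(E/\Kinf)[p] @>>> H^1(K_{\widetilde{\Omega}}/\Kinf, E[p^\infty])[p] @>{\theta_0}>> \bigoplus_{v\in \widetilde{\Omega}\setminus\Omega_0}\cH_v(E/\Kinf)
\end{CD}
\]
where the vertical maps are induced by the inclusion $E[p]\hookrightarrow E[p^\infty]$; the left-hand vertical arrow is the map we want to show is an isomorphism. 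By the snake lemma applied to the diagram, it suffices to show that $g$ is an isomorphism and that $h$ is injective on the image-relevant part, i.e. that $\ker h$ meets the image of $\overline{\theta}_0$ trivially; in fact I would aim for the cleaner statement that $g$ is an isomorphism and $h$ is injective.

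The isomorphism $g\colon H^1(K_{\widetilde{\Omega}}/\Kinf, E[p])\xrightarrow{\sim} H^1(K_{\widetilde{\Omega}}/\Kinf, E[p^\infty])[p]$ comes from the long exact cohomology sequence attached to $0\to E[p]\to E[p^\infty]\xrightarrow{p} E[p^\infty]\to 0$: the relevant piece is
\[
E(\Kinf)[p^\infty]/p \longrightarrow H^1(K_{\widetilde{\Omega}}/\Kinf, E[p]) \longrightarrow H^1(K_{\widetilde{\Omega}}/\Kinf, E[p^\infty])[p] \longrightarrow 0,
\]
and the first term vanishes because $E(\Kinf)[p^\infty]=0$ (noted in the paragraph preceding Definition \ref{def of Omega0}, using $E(K)[p]=0$ and Nakayama). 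Hence $g$ is an isomorphism. The analogous argument runs locally: for $v\in\widetilde{\Omega}\setminus S_p$ and $v'\mid v$, the term $E(\Kinf_{,v'})[p^\infty]/p$ vanishes because by \cite[Proposition 2.1]{Gre99} one has $E(\Kinf_{,v'})\otimes\Q_p/\Z_p=0$, so $E(\Kinf_{,v'})[p^\infty]$ is finite and divisible, hence trivial; thus $H^1(\Kinf_{,v'},E[p])\hookrightarrow H^1(\Kinf_{,v'},E[p^\infty])[p]$, giving injectivity of the local map at primes away from $p$.

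The main obstacle is the local condition at $v\in S_p$: one must check that under the identification $H^1(\Kinf_{,v'},E[p])\hookrightarrow H^1(\Kinf_{,v'},E[p^\infty])$ the residual local condition $\overline{L}_{v'}$ is exactly the preimage of $L_{v'}$, so that the induced map $\cH_v(E[p]/\Kinf)\to \cH_v(E/\Kinf)$ is injective. This amounts to a compatibility of the two Kummer-type exact sequences: the snake lemma applied to
\[
\begin{CD}
0 @>>> \overline{L}_{v'} @>>> H^1(\Kinf_{,v'},E[p]) @>>> H^1(I_{v'},D[p])\\
@. @VVV @VVV @VVV\\
0 @>>> L_{v'} @>>> H^1(\Kinf_{,v'},E[p^\infty]) @>>> H^1(I_{v'},D)
\end{CD}
\]
combined with the fact, again from $E(\Kinf_{,v'})[p^\infty]=0$ (good ordinary reduction forces the relevant torsion in $D$ and in the unramified quotient to be controlled), that the right-hand vertical map $H^1(I_{v'},D[p])\to H^1(I_{v'},D)[p]$ is injective — here I would invoke that $D(I_{v'})/p$ injects appropriately, or equivalently cite \cite[Proposition 2.4]{Gre99} which already identifies the Greenberg local condition with $J_v$. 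Granting this, the induced map $\cH_v(E[p]/\Kinf)\to\cH_v(E/\Kinf)$ is injective for all $v\in\widetilde{\Omega}$, so $h$ is injective; since $g$ is an isomorphism, the snake lemma yields that $\Sel^{\Omega_0}(E[p]/\Kinf)\to\Sel^{\Omega_0}(E/\Kinf)[p]$ is an isomorphism, as claimed. This is exactly the argument of \cite[Lemma 2.10 or similar]{Kim09}/\cite[pp. 37--38]{GV00}, adapted to the general $\Z_p$-extension using $E(\Kinf)[p^\infty]=0$.
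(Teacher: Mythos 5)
The overall strategy (the commutative diagram, the isomorphism $g$ from $E(\Kinf)[p^\infty]=0$, the injectivity of $j_v$ at $v\in S_p$ from divisibility of $D=H^0(I_{v'},D)$) matches the paper. However, there is a genuine gap at the primes $v\in\widetilde{\Omega}\setminus(S_p\cup\Omega_0)$, and it is exactly the step where the definition of $\Omega_0$ is supposed to do its work.

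You claim that for $v\nmid p$ the vanishing $E(\Kinf_{,v'})\otimes\Q_p/\Z_p=0$ implies that $E(\Kinf_{,v'})[p^\infty]$ is ``finite and divisible, hence trivial.'' This inference is false. For an abelian group $A$, one has $A\otimes\Q_p/\Z_p=\varinjlim(A/p^nA,\times p)$, and this direct limit vanishes for any finite group $A$ (e.g.\ $A=\Z/p$) and, more relevantly, for any cofinitely generated $\Z_p$-module with trivial divisible part, no matter how large its torsion. So $E(\Kinf_{,v'})\otimes\Q_p/\Z_p=0$ tells you nothing about whether $H^0(\Kinf_{,v'},E[p^\infty])$ is $p$-divisible, which is what you actually need for injectivity of $H^1(\Kinf_{,v'},E[p])\to H^1(\Kinf_{,v'},E[p^\infty])[p]$. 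In fact, $E(\Kinf_{,v'})[p^\infty]$ is typically \emph{not} divisible at primes of bad reduction; that failure is precisely what the imprimitive Selmer group is built to discard. If your argument were valid, it would show injectivity of $\iota_v$ at \emph{every} $v\nmid p$, forcing $\Sel(E[p]/\Kinf)\simeq\Sel(E/\Kinf)[p]$ with $\Omega_0=\emptyset$ — which is false in general and would make the paper's introduction of $\Omega_0$ pointless.

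The correct argument here uses crucially that $v\notin\Omega_0$. By Definition \ref{def of Omega0}, one of the following holds: either $\mu_p\subset K_v$ and $E$ has additive or non-split multiplicative reduction at $v$ (in which case $E(K_{\infty,v'})[p^\infty]=0$ by \cite[Proposition 5.1]{HM99}), or $v\nmid N_i/\closure{N}_i$ (in which case one invokes \cite[Lemma 4.1.2]{EPW06} to get $p$-divisibility of $E(K_{\infty,v'})[p^\infty]$). This case analysis, which your proof omits entirely, is where the membership $v\notin\Omega_0$ is consumed and is the actual content of the local step at primes away from $p$.

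Two smaller points: your diagram uses the full direct sums $\bigoplus_v\cH_v$ as the right-hand terms rather than $\op{im}\overline{\theta}_0$ and $\op{im}\theta_0$ as in the paper; this is fine for the diagram chase you want, but you should say explicitly that exactness of the rows and injectivity of $g$ and $h$ (not just $g$ an isomorphism) is what forces the left vertical to be an isomorphism. And at $v\in S_p$, your snake-lemma variant implicitly requires injectivity of the middle vertical $H^1(\Kinf_{,v'},E[p])\to H^1(\Kinf_{,v'},E[p^\infty])$, which is a further local divisibility statement you have not verified; the paper sidesteps this by instead composing through the injective horizontal maps into $\bigoplus_{v'|v}H^1(I_{v'},D[p])$ and $\bigoplus_{v'|v}H^1(I_{v'},D)[p]$, from which $\iota_v$ injective follows immediately from $j_v$ injective.
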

\begin{proof}
Recall that $\widetilde{\Omega}$ consists of a finite set of primes containing the primes above $p$ and the primes at which either $E_1$ or $E_2$ has bad reduction.
In particular, $\Omega_0$ is contained in $\widetilde{\Omega}$.
Consider the diagram relating the two Selmer groups
\[
\begin{tikzcd}[column sep = small, row sep = large]
0\arrow{r} & \Sel^{\Omega_0}(E[p]/\Kinf) \arrow{r} \arrow{d}{f} & H^1(K_{\widetilde{\Omega}}/\Kinf, E[p]) \arrow{r} \arrow{d}{g} & \op{im} \overline{\theta}_0 \arrow{r} \arrow{d}{h} & 0\\
0\arrow{r} & \Sel^{\Omega_0}(E/\Kinf) [p] \arrow{r} & H^1(K_{\widetilde{\Omega}}/\Kinf, E[p^{\infty}])[p] \arrow{r} &\left(\op{im} \theta_0\right)[p]\arrow{r} & 0,
\end{tikzcd}\]
where the vertical maps are induced by the Kummer sequence.
Note that
\[
0 = H^0(K,E[p])=H^0(\Kinf,E[p])^{\Gamma}.
\]
Since $\Kinf/K$ is a pro-$p$ extension, it follows from Lemma~\ref{NSW lemma} that $H^0(\Kinf, E[p^{\infty}])=0$.
Therefore $g$ (and hence $f$) is injective.
On the other hand, it is clear that $g$ is surjective.
By an application of the snake lemma, it suffices to show that $h$ is injective.
This follows from Lemma~\ref{no BS lemma}.
\end{proof}

\begin{Lemma}
The isomorphism $E_1[p]\simeq E_2[p]$ of Galois modules induces an isomorphism of residual Selmer groups 
\[
\Sel^{\Omega_0}(E_1[p]/\Kinf)\simeq \Sel^{\Omega_0}(E_2[p]/\Kinf).
\]
\end{Lemma}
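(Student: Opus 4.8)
The plan is to fix a $\Gal(\closure{K}/K)$-equivariant isomorphism $\phi\colon E_1[p]\xrightarrow{\ \sim\ }E_2[p]$ and to check that the induced maps on Galois cohomology match up, term by term, the data defining the two imprimitive residual Selmer groups. First I would note that the index sets in play are literally the same for $E_1$ and $E_2$: the sets $\widetilde{\Omega}$, $\Omega_0$, $S_p$ are defined symmetrically in $E_1,E_2$, and since $\closure{N}_i\mid N_i$ the module $E_i[p]$ is unramified outside $\widetilde{\Omega}$, so $H^1(K_{\widetilde{\Omega}}/\Kinf,E_i[p])$ makes sense and $\phi$ induces an isomorphism $\phi_*\colon H^1(K_{\widetilde{\Omega}}/\Kinf,E_1[p])\xrightarrow{\sim}H^1(K_{\widetilde{\Omega}}/\Kinf,E_2[p])$. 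Restricting $\phi$ to each decomposition group (resp.\ inertia group) at a prime $v'$ of $\Kinf$ similarly gives isomorphisms of all the local cohomology groups occurring in the definitions, compatible with the restriction maps $H^1(K_{\widetilde{\Omega}}/\Kinf,-)\to H^1(\Kinf_{,v'},-)$. Thus $\phi_*$ fits into a commutative square relating the two global-to-local maps $\overline{\theta}_0$, and passing to kernels will give the asserted isomorphism \emph{provided} $\phi_*$ carries the local condition defining $\Sel^{\Omega_0}(E_1[p]/\Kinf)$ onto the one defining $\Sel^{\Omega_0}(E_2[p]/\Kinf)$ at each $v\in\widetilde{\Omega}\setminus\Omega_0$.

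For $v\in\widetilde{\Omega}\setminus(S_p\cup\Omega_0)$ this is immediate: the local term $\cH_v(E_i[p]/\Kinf)=\prod_{v'|v}H^1(\Kinf_{,v'},E_i[p])$ is the full local cohomology, which $\phi_*$ identifies compatibly with restriction by the previous paragraph. The content is at the primes $v\in S_p$, where the local condition is cut out by $\overline{L}_{v'}=\ker\bigl(H^1(\Kinf_{,v'},E_i[p])\to H^1(I_{v'},D_i[p])\bigr)$ and $D_i[p]=T_i^-/pT_i^-$ is the unramified quotient of $E_i[p]$ coming from the ordinary filtration. Here I would argue that the subgroup $C_i:=T_i^+/pT_i^+\subset E_i[p]$ is intrinsic to $E_i[p]$ as a $\op{G}_{K_v}$-module: since $E_i$ has good ordinary reduction at $v$ and $p\geq5$, the representation $E_i[p]$ restricted to $\op{G}_{K_v}$ is an extension of an unramified line by a line on which inertia acts through the mod $p$ cyclotomic character, and (as in the arguments of Greenberg--Vatsal \cite{GV00} and \cite{EPW06}) $C_i$ is the unique such sub-line; hence $\phi$ must send $C_1$ onto $C_2$, inducing an isomorphism $D_1[p]\xrightarrow{\sim}D_2[p]$ compatible with the projections $E_i[p]\twoheadrightarrow D_i[p]$. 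Consequently $\phi_*$ identifies the subgroups $\overline{L}_{v'}$, and therefore the local terms $\cH_v(E_i[p]/\Kinf)$, compatibly with restriction.

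Granting the matching of local conditions, the diagram chase is routine: in the commutative square relating the two copies of $\overline{\theta}_0$ the vertical maps on the source $H^1(K_{\widetilde{\Omega}}/\Kinf,E_i[p])$ and on the target $\bigoplus_{v\in\widetilde{\Omega}\setminus\Omega_0}\cH_v(E_i[p]/\Kinf)$ are isomorphisms, so the induced map on kernels is an isomorphism $\Sel^{\Omega_0}(E_1[p]/\Kinf)\xrightarrow{\sim}\Sel^{\Omega_0}(E_2[p]/\Kinf)$. I expect the one genuinely delicate point to be the claim in the second paragraph that the ordinary line $C_i$ at a prime above $p$ is preserved by $\phi$; this is precisely where the local condition at $p$ could \emph{a priori} depend on $E_i$ itself rather than only on $E_i[p]$, and its justification rests on the good-ordinary hypothesis (together with $p\geq5$) pinning down $C_i$ uniquely inside $E_i[p]$.
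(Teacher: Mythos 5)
Your proposal follows essentially the same route as the paper: identify the global cohomology groups via $\phi$, and match the local conditions $\cH_v$ term by term, with the only delicate point at $v\in S_p$ reducing to the claim that $\phi$ descends to an isomorphism $D_1[p]\xrightarrow{\sim}D_2[p]$. The paper simply asserts this last point after recording the $\op{G}_{\Q_p}$-action on $C_i$ and $D_i$, whereas you additionally supply the justification — that the ordinary sub-line $C_i$ is the unique $\op{G}_{K_v}$-stable line on which inertia acts through $\bar\chi$, hence is preserved by $\phi$ — so your write-up is a slightly more explicit version of the same argument.
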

\begin{proof}
Let $T_i$ be the $p$-adic Tate module of $E_i$ and identify $E_i[p^{\infty}]$ with $T_i\otimes \Q_p/\Z_p$.
Note that $E_i[p]$ fits into a short exact sequence 
\[
0\rightarrow C_i[p]\rightarrow E_i[p]\rightarrow D_i[p]\rightarrow 0,
\] where $C_i=T_i^+\otimes \Q_p/\Z_p$ and $D_i=T_i^-\otimes \Q_p/\Z_p$.
The action of $\op{G}_{\Q_p}$ on $C_i$ is via $\chi \gamma_i$, where $\gamma_i$ is an unramified character and $\chi$ is the $p$-adic cyclotomic character.
On the other hand, the action on $D_i$ is via the unramified character $\gamma_i^{-1}$.

Let $\Phi:E_1[p]\xrightarrow{\sim} E_2[p]$ be a choice of isomorphism of Galois modules.
It induces an isomorphism $H^1(K_{\widetilde{\Omega}}/\Kinf, E_1[p])\xrightarrow{\sim} H^1(K_{\widetilde{\Omega}}/\Kinf, E_2[p])$.
To prove the lemma, it suffices to show that for $v\in \widetilde{\Omega}\setminus \Omega_0$, 
the isomorphism $\Phi$ induces an isomorphism 
\[
\mathcal{H}_v(E_1[p]/\Kinf)\xrightarrow{\sim} \mathcal{H}_v(E_2[p]/\Kinf).\]
This is clear for $v\neq p$.
For $v=p$, this follows from the fact that $\Phi$ induces an isomorphism $D_1[p]\xrightarrow{\sim} D_2[p]$.
\end{proof} 

\begin{Cor}\label{resiso}
\label{p-torsion of S_0 fine selmer are iso}
With notation as above,
\[\Sel^{\Omega_0}(E_1/\Kinf)[p]\simeq \Sel^{\Omega_0}(E_2/\Kinf)[p].\]
\end{Cor}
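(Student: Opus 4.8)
The plan is to chain together the two results immediately preceding this corollary, so that the argument is essentially formal. First I would invoke Proposition \ref{Kim 2.10} twice, once for $E=E_1$ and once for $E=E_2$. Its hypothesis $E(K)[p]=0$ is exactly standing assumption (3) of this section, and by $p$-congruence it holds for $E_1$ if and only if it holds for $E_2$; so for $i=1,2$ we obtain canonical isomorphisms
\[
\Sel^{\Omega_0}(E_i/\Kinf)[p] \simeq \Sel^{\Omega_0}(E_i[p]/\Kinf).
\]

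Next I would apply the Lemma preceding this corollary, which asserts that the chosen Galois-module isomorphism $\Phi\colon E_1[p]\xrightarrow{\sim}E_2[p]$ induces an isomorphism of residual imprimitive Selmer groups
\[
\Sel^{\Omega_0}(E_1[p]/\Kinf)\simeq \Sel^{\Omega_0}(E_2[p]/\Kinf).
\]
Concatenating the three displayed isomorphisms yields
\[
\Sel^{\Omega_0}(E_1/\Kinf)[p] \simeq \Sel^{\Omega_0}(E_1[p]/\Kinf)\simeq \Sel^{\Omega_0}(E_2[p]/\Kinf)\simeq \Sel^{\Omega_0}(E_2/\Kinf)[p],
\]
which is the claim.

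There is no genuine obstacle here: all the analytic and cohomological work (comparing the residual Selmer group with the $p$-torsion of the full Selmer group at every prime of $\widetilde{\Omega}$, and the $p$-divisibility statements at primes outside $\Omega_0$) has already been carried out in Proposition \ref{Kim 2.10} and the intervening Lemma. The only point worth a remark in the write-up is that the isomorphism produced is not merely abstract but is induced by $\Phi$, so that it is compatible with the identifications of Proposition \ref{Kim 2.10}; this is immediate since those identifications come from the Kummer sequence, which is functorial in the Galois module, and $\Phi$ respects the ordinary filtration at primes above $p$ (it carries $D_1[p]$ to $D_2[p]$, as used in the proof of the preceding Lemma).
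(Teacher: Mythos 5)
Your proof is correct and is exactly the argument the paper intends: the corollary is an immediate consequence of chaining Proposition \ref{Kim 2.10} (applied to each $E_i$) with the Lemma directly preceding it, which is why the paper states it without a separate proof. Your remark on functoriality via the Kummer sequence and the compatibility of $\Phi$ with the ordinary filtration is a sensible clarification but not something the paper spells out.
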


\begin{Th}\label{resisothm}
Let $E_1$ and $E_2$ be $p$-congruent elliptic curves defined over $K$ such that
\begin{enumerate}
 \item $E_1(K)[p]=0$, or equivalently, $E_2(K)[p]=0$.
 \item Assumption~\ref{finitely decomposed assumption} is satisfied.
\end{enumerate}
Then,
\[
\begin{split}&\Sel(E_1/\Kinf) \text{ is }\Lambda\text{-cotorsion with }\mu=0\\
\Leftrightarrow &\Sel(E_2/\Kinf) \text{ is }\Lambda\text{-cotorsion with }\mu=0.
\end{split}
\]
\end{Th}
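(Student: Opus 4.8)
The plan is to reduce the equivalence to a statement about the \emph{residual} imprimitive Selmer groups, which by Corollary \ref{resiso} are isomorphic for $E_1$ and $E_2$. Recall the standard fact (a consequence of the Structure Theorem) that for a cofinitely generated $\Lambda$-module $\cH$, one has $\cH$ is $\Lambda$-cotorsion with $\mu=0$ if and only if $\cH[p]$ is a cofinitely generated $\Z_p$-module, i.e. $\corank_{\F_p}\cH[p]<\infty$; and moreover when this holds, $\corank_{\Z_p}\cH$ equals $\corank_{\F_p}\cH[p]$ minus a correction coming from the $\mu=0$ hypothesis actually being needed only to compare the two coranks. So the first step is to set up, for $E=E_1$ or $E_2$, the exact sequence relating $\Sel^{\Omega_0}(E/\Kinf)$ to $\Sel(E/\Kinf)$ via the local terms at $v\in\Omega_0$, namely
\[
0\rightarrow \Sel(E/\Kinf)\rightarrow \Sel^{\Omega_0}(E/\Kinf)\rightarrow \bigoplus_{v\in\Omega_0}\cH_v(E/\Kinf),
\]
and to argue, using Lemma \ref{localmuzero} (each $\cH_v(E/\Kinf)$ for $v\in\Omega_0$ is a cofinitely generated $\Z_p$-module, hence cotorsion with $\mu=0$) together with Lemma \ref{no finite index lemma} (no finite-index $\Lambda$-submodules, which forces surjectivity of the global-to-local map after localizing the Poitou--Tate sequence), that $\Sel(E/\Kinf)$ is $\Lambda$-cotorsion with $\mu=0$ if and only if $\Sel^{\Omega_0}(E/\Kinf)$ is.

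The second step is to pass from $\Sel^{\Omega_0}(E/\Kinf)$ to its $p$-torsion. By Proposition \ref{Kim 2.10} we have $\Sel^{\Omega_0}(E[p]/\Kinf)\simeq \Sel^{\Omega_0}(E/\Kinf)[p]$, and by the remark following Lemma \ref{no finite index lemma} the module $\Sel^{\Omega_0}(E/\Kinf)^\vee$ has no proper finite-index $\Lambda$-submodule (it is a quotient of $H^1(K_{\widetilde\Omega}/\Kinf,E[p^\infty])^\vee$ by a submodule, so one must check the property is inherited — alternatively one argues directly from Lemma \ref{no finite index lemma} applied to the ambient $H^1$ and the fact that the local terms removed are themselves $\mu=0$). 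Granting that, a cofinitely generated $\Lambda$-module $M$ with $M^\vee$ having no finite-index $\Lambda$-submodule satisfies: $M$ is $\Lambda$-cotorsion with $\mu=0$ $\iff$ $M[p]$ is finite. Thus $\Sel^{\Omega_0}(E/\Kinf)$ is $\Lambda$-cotorsion with $\mu=0$ if and only if $\Sel^{\Omega_0}(E[p]/\Kinf)$ is finite.

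The third and final step is to combine: $\Sel(E_i/\Kinf)$ is $\Lambda$-cotorsion with $\mu=0$ $\iff$ $\Sel^{\Omega_0}(E_i/\Kinf)$ is $\iff$ $\Sel^{\Omega_0}(E_i[p]/\Kinf)$ is finite. But by Corollary \ref{resiso} the isomorphism $E_1[p]\simeq E_2[p]$ of Galois modules gives $\Sel^{\Omega_0}(E_1/\Kinf)[p]\simeq\Sel^{\Omega_0}(E_2/\Kinf)[p]$, equivalently $\Sel^{\Omega_0}(E_1[p]/\Kinf)\simeq\Sel^{\Omega_0}(E_2[p]/\Kinf)$ by Proposition \ref{Kim 2.10}; in particular one is finite if and only if the other is. Chaining the three equivalences for $i=1$ and $i=2$ through this common finiteness condition yields the theorem.

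The main obstacle I anticipate is the ``no finite-index $\Lambda$-submodule'' bookkeeping: Lemma \ref{no finite index lemma} is stated for $H^1(K_{\widetilde\Omega}/\Kinf,E[p^\infty])$, and one needs this property (or a workable substitute) for $\Sel^{\Omega_0}(E/\Kinf)^\vee$ itself in order to upgrade ``$\Sel^{\Omega_0}(E[p]/\Kinf)$ finite'' to ``$\Sel^{\Omega_0}(E/\Kinf)$ is cotorsion with $\mu=0$''. The clean way around this is to run the argument of Greenberg--Vatsal: work with the surjective global-to-local description, use that the cokernel of $H^1(K_{\widetilde\Omega}/\Kinf,E[p])\to \bigoplus_{v\in\widetilde\Omega\setminus\Omega_0}\cH_v(E[p]/\Kinf)$ injects (via the snake lemma applied to the Kummer-sequence ladder) into the corresponding $p$-torsion of the $H^1$ cokernel, which vanishes by Lemma \ref{no finite index lemma}; this shows $\corank_{\F_p}\Sel^{\Omega_0}(E[p]/\Kinf)$ controls $\corank_\Lambda$ of the dual Selmer group on the nose, giving the desired dichotomy without needing the no-finite-index property for the Selmer group separately. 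I would present the proof in that order, citing Proposition \ref{Kim 2.10}, Corollary \ref{resiso}, Lemma \ref{localmuzero}, and Lemma \ref{no finite index lemma}, and relegating the $\Lambda$-module generalities to a one-line appeal to the Structure Theorem.
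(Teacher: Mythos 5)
Your proposal follows the same skeleton as the paper's proof: reduce via the left-exact sequence and Lemma \ref{localmuzero} to the $\Omega_0$-imprimitive Selmer group, reduce further to finiteness of its $p$-torsion, and conclude from Corollary \ref{resiso}. The one place you over-complicate is the middle equivalence ``$\Sel^{\Omega_0}(E/\Kinf)$ is $\Lambda$-cotorsion with $\mu=0$ $\iff$ $\Sel^{\Omega_0}(E/\Kinf)[p]$ is finite.'' You worry that this needs the no-finite-index-$\Lambda$-submodule property, and you outline a Greenberg--Vatsal-style argument to supply it. That is unnecessary. Since $\Sel^{\Omega_0}(E/\Kinf)^{\vee}$ is a finitely generated module over the local ring $\Lambda$ (being a subquotient of the dual of $H^1(K_{\widetilde\Omega}/\Kinf, E[p^\infty])$, which is cofinitely generated over $\Lambda$), one has directly: $\Sel^{\Omega_0}(E/\Kinf)[p]$ finite $\iff$ $\Sel^{\Omega_0}(E/\Kinf)^{\vee}/p$ finite $\iff$ (by Nakayama applied to $\Lambda$, or equivalently by inspecting the structure theorem) $\Sel^{\Omega_0}(E/\Kinf)^{\vee}$ is a finitely generated $\Z_p$-module $\iff$ $\Sel^{\Omega_0}(E/\Kinf)$ is $\Lambda$-cotorsion with $\mu=0$. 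No surjectivity or finite-index-submodule argument is needed; this is precisely the content of the paper's citation of \cite[Proposition 3.10]{KR21}.

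There is also a latent circularity in the workaround you suggest. Lemma \ref{no finite index lemma} is stated under the hypothesis that $\SelpK$ is already $\Lambda$-cotorsion, which is exactly the conclusion one is trying to obtain for one of the two curves; so appealing to it for both $E_1$ and $E_2$ symmetrically would assume what is to be proved. The Nakayama argument above avoids the issue entirely and is what makes the paper's proof short.
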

\begin{proof}
For $i=1,2$, we have a left exact sequence
\[0\rightarrow \Sel(E_i/\Kinf)\rightarrow \Sel^{\Omega_0}(E_i/\Kinf)\rightarrow \bigoplus_{v\in \Omega_0} \mathcal{H}_v(E_i/\Kinf).\]
By Lemma~\ref{localmuzero}, we know that for $v\in \Omega_0$, the cohomology group $\mathcal{H}_v(E_i/\Kinf)$ is a cotorsion $\Lambda$-module with $\mu=0$.
Therefore, 
\[
\begin{split}&\Sel(E_i/\Kinf) \text{ is }\Lambda\text{-cotorsion with }\mu=0\\
\Leftrightarrow &\Sel^{\Omega_0}(E_i/\Kinf) \text{ is }\Lambda\text{-cotorsion with }\mu=0.
\end{split}
\]
Using an argument identical to \cite[Proposition~3.10]{KR21}, we can show that
\[\begin{split}&\Sel^{\Omega_0}(E_i/\Kinf) \text{ is }\Lambda\text{-cotorsion with }\mu=0\\
\Leftrightarrow &\Sel^{\Omega_0}(E_i/\Kinf)[p] \text{ is finite.} \end{split}\]
By Corollary~\ref{resiso}, we have the isomorphism 
\[\Sel^{\Omega_0}(E_1/\Kinf)[p]\simeq \Sel^{\Omega_0}(E_2/\Kinf)[p].\]
Thus, the assertion follows.
\end{proof}

For the rest of this section we assume that the conditions of Theorem~\ref{resisothm} are satisfied for both $E_1$ and $E_2$.
Furthermore, we impose the following assumption.
\begin{assumption}\label{cotorsionass}
The $p$-primary Selmer group, $\Sel(E_1/\Kinf)$ (or equivalently, $\Sel(E_2/\Kinf)$) is a finitely generated $\Z_p$-module.
\end{assumption}
We now show that $\lambda$-invariants of $\Sel^{\Omega_0}(E_1/\Kinf)$ and $\Sel^{\Omega_0}(E_2/\Kinf)$ coincide.
As before, we write $E$ to denote either $E_1$ or $E_2$.
Recall that in Lemma~\ref{analogue of HM 2.3}, we showed that the map defining the (classical) Selmer group is surjective.
It follows that
\begin{equation}
\label{quotient}
\Sel^{\Omega_0}(E/\Kinf)/\SelpK \simeq \bigoplus_{v\in \Omega_0}\cH_v(E/\Kinf).
\end{equation}
For $v\in \Omega_0$, Assumption~\ref{finitely decomposed assumption} guarantees that there are finitely many primes $v'|v$ in $\Kinf$.
Hence, for $v\in \Omega_0$, we have that $\cH_v(E/\Kinf)$ is a direct sum over a finite set of primes. 

\begin{Defi}\label{random defi needed to be cited}Let $\sigma_E^{(v)}$ denote the $\Z_p$-corank of $\cH_v(E/\Kinf)$ for $v\in \Omega_0$.
\end{Defi}
Let $\lambda^{\Omega_0}(E/\Kinf)$ be the $\lambda$-invariant of $\Sel^{\Omega_0}(E/\Kinf)$.
Therefore,
\begin{equation}
\label{relating im primitive and classical lambda invariant}
\lambda^{\Omega_0}(E/\Kinf)=\op{corank}_{\Z_p} \left(\Sel^{\Omega_0}(E/\Kinf)\right) = \lambdapK + \sum_{v\in \Omega_0} \sigma_E^{(v)}.
\end{equation}
The first equality follows from the structure theory of $\Lambda$-modules, and the second equality follows from \eqref{quotient}.

\begin{Lemma}
\label{lemma: Hp is cofree}
Let $E$ be either $E_1$ or $E_2$.
Then, $\cH_p(E/\Kinf)$ is $\Lambda$-cofree.
\end{Lemma}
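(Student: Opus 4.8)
The plan is to show that $\cH_p(E/\Kinf)^\vee$ is a free $\Lambda$-module by combining the structure theory of $\Lambda$-modules with a vanishing statement. Recall that
\[
\cH_p(E/\Kinf) = \bigoplus_{v\in S_p} \bigoplus_{v'|v} H^1\left({\Kinf}_{,v'}, E[p^\infty]\right)/L_{v'},
\]
and that each prime $v\in S_p$ is ramified in $\Kinf$, so that each local extension ${\Kinf}_{,v'}/K_v$ is deeply ramified in the sense of Coates--Greenberg. First I would recall the local computation of Coates--Greenberg (\cite[Proposition 4.3, Proposition 4.8]{CG96}): for a deeply ramified extension, there is a natural identification $H^1\left({\Kinf}_{,v'}, E[p^\infty]\right)/L_{v'} \simeq H^1\left({\Kinf}_{,v'}, D\right)$, where $D = T^-\otimes\Q_p/\Z_p$ and the local condition $L_{v'}$ is exactly the kernel of restriction to inertia. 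Since there are only finitely many primes $v'$ above each $v\in S_p$ (all primes above $p$ are finitely decomposed in a $\Z_p$-extension — the decomposition group of $v'$ is open in $\Gamma\simeq\Z_p$ because $\Kinf/K$ is ramified at $v$), each summand is, after taking Pontryagin duals and inducing up, a module of the form $\operatorname{Ind}_{\Gamma_{v'}}^{\Gamma}(\text{something over }\Lambda(\Gamma_{v'}))$, so it suffices to treat a single local term.

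Next I would establish the two properties that force freeness. By the structure theory, a cofinitely generated cotorsion $\Lambda$-module $M$ has $M^\vee$ free if and only if (i) $M$ has $\mu$-invariant zero, i.e.\ $M^\vee$ has no $p$-torsion, and (ii) $M^\vee$ has no nonzero finite submodule and projective dimension $\le 1$ — more concretely, for $M^\vee$ finitely generated over $\Lambda\simeq\Z_p\llbracket T\rrbracket$, being free is equivalent to $M^\vee$ being torsion-free as a $\Z_p$-module together with $M^\vee/T M^\vee$ being $\Z_p$-free (this is the standard criterion via Nakayama). Dually, this means I must show: $\cH_p(E/\Kinf)$ is $p$-divisible, and $\cH_p(E/\Kinf)^{\Gamma}$ (the $T$-torsion computed on the dual side corresponds to $\Gamma$-invariants on $M$) is $\Z_p$-cofree, i.e.\ $\cH_p(E/\Kinf)^\Gamma$ is a divisible group. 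For $p$-divisibility: $H^1\left({\Kinf}_{,v'}, D\right)$ is $p$-divisible because $D$ is $p$-divisible and $H^2\left({\Kinf}_{,v'}, D[p]\right)$ vanishes (local cohomological dimension of ${\Kinf}_{,v'}$, which contains the cyclotomic $\Z_p$-extension of $K_v$, is $\le 1$ by Tate — this is where one uses that cohomological dimension drops to $1$ over the deeply ramified local field). For the $\Gamma$-invariants: use the inflation-restriction sequence relating $H^1\left(K_v, D\right)$-type terms to $H^1\left({\Kinf}_{,v'}, D\right)^{\Gamma_{v'}}$ and observe that $H^1(K_v, D)$ is itself a divisible group of corank $[K_v:\Q_p]$ (local Euler characteristic, again using that $D$ is unramified of rank one), with the relevant error terms $H^i(\Gamma_{v'}, D^{I})$ being finite and in fact controllable.

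The main obstacle I expect is the precise bookkeeping in step two: verifying that $\cH_p(E/\Kinf)^\Gamma$ is genuinely divisible (not just cofinitely generated) and pinning down its corank, which requires carefully tracking the unramified quotient $D$ and the behaviour of $H^1(I_{v'}, D)$ under the deeply ramified base change. The cleanest route is probably to cite the Coates--Greenberg identification $\cH_p(E/\Kinf) \simeq \bigoplus_{v'|p} H^1\left({\Kinf}_{,v'}, D\right)$ directly, then invoke the known structure of $H^1$ of the unramified rank-one module $D$ over a deeply ramified local field — namely that its Pontryagin dual is free over $\Lambda(\Gamma_{v'})$ of rank $[K_v:\Q_p]$, a computation essentially in Greenberg's PCMS notes (\cite[\S4]{Gre_PCMS}) — and conclude freeness of the finite direct sum, hence of $\cH_p(E/\Kinf)^\vee$ over $\Lambda$. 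I would then record the corank formula $\corank_\Lambda \cH_p(E/\Kinf)^\vee = [K:\Q]$ as it will be used subsequently.
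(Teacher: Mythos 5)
You correctly identify the Coates--Greenberg identification $\cH_p(E/\Kinf)\simeq \bigoplus_{v'|p} H^1({\Kinf}_{,v'},D)$ and correctly observe that primes above $p$ are finitely decomposed, so the sum is finite and it suffices to analyse each local term. Your ``cleanest route'' at the end --- cite the direct local Iwasawa-theoretic computation that $H^1({\Kinf}_{,v'},D)^\vee$ is $\Lambda(\Gamma_{v'})$-free of rank $[K_v:\Q_p]$ --- is exactly what the paper does: its proof is simply a pointer to \cite[p.~23]{GV00}, where that computation is carried out. So the fallback is sound.

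However, the main line of argument you lay out rests on a false criterion for cofreeness, and this is a genuine gap. You claim that a finitely generated $\Lambda$-module $N=M^\vee$ is free provided $N$ is $\Z_p$-torsion-free and $N/TN$ is $\Z_p$-free (equivalently, $M$ is $p$-divisible and $M^\Gamma$ is divisible). This is not true: take $M=\Q_p/\Z_p$ with trivial $\Gamma$-action, so $M^\vee=\Z_p\simeq \Lambda/T\Lambda$. Then $M$ is $p$-divisible, $M^\Gamma=M$ is divisible, $M^\vee$ is $\Z_p$-torsion-free and $M^\vee/TM^\vee=\Z_p$ is $\Z_p$-free, yet $M^\vee$ is plainly not $\Lambda$-free. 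The correct local criterion requires $N$ to be torsion-free as a $\Lambda$-module (not merely $\Z_p$-torsion-free), i.e.\ $M$ must be $\Lambda$-divisible, not just $p$-divisible; once $N$ is $\Lambda$-torsion-free, the short exact sequence $0\to N\xrightarrow{T}N\to N/TN\to 0$ together with $\Z_p$-freeness of $N/TN$ and a Nakayama/minimal-presentation argument does give freeness. Your proposed verification (``$\cH_p$ is $p$-divisible because $H^2$ of the local field with $D[p]$-coefficients vanishes'') only delivers $p$-divisibility and hence $\mu=0$; it does not rule out a $\Lambda$-cotorsion summand in $\cH_p^\vee$ of the shape $\Lambda/f$ with $f$ a distinguished polynomial, which is precisely what $\Lambda$-divisibility must exclude. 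To close the gap you either need to prove $\Lambda$-divisibility directly, or (better, and what the paper and \cite{GV00} actually do) compute $\cH_p^\vee$ explicitly via local duality and Iwasawa's description of the relevant inverse limit of local cohomology groups and read off that it is free of the expected rank.
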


\begin{proof}
The reader is referred to \cite[p.~23]{GV00} for the argument.
\end{proof}
\begin{Proposition}\label{nofinitelambdasubs}
Let $E$ be either $E_1$ or $E_2$.
Suppose that
\begin{enumerate}
 \item Assumptions \ref{finitely decomposed assumption} and \ref{cotorsionass} hold,
 \item $E(K)[p]=0$.
\end{enumerate}Then, the Selmer group $\Sel^{\Omega_0} (E/\Kinf)$ contains no proper finite index $\Lambda$-submodules.
\end{Proposition}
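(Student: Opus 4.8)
The plan is to mimic Greenberg's argument for the classical Selmer group (Proposition 4.9 of \cite{Gre99}), as adapted in Lemma \ref{no finite index lemma}, but now for the $\Omega_0$-imprimitive Selmer group. The starting point is that $\Sel^{\Omega_0}(E/\Kinf)$ is a quotient of $H^1(K_{\widetilde{\Omega}}/\Kinf, E[p^{\infty}])$, and by Lemma \ref{no finite index lemma} the latter has no proper $\Lambda$-submodule of finite index once we know $\SelpK$ is $\Lambda$-cotorsion and $E(\Kinf)[p^{\infty}]$ is finite — both of which hold here: $\Lambda$-cotorsionality follows from Assumption \ref{cotorsionass}, and $E(\Kinf)[p^{\infty}]=0$ was observed from Nakayama's lemma since $E(K)[p]=0$. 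So the cokernel of the global-to-local map $\theta_0$ is the only possible source of a finite-index submodule, and the crux is to show that $\theta_0$ is surjective, i.e. that
\[
H^1(K_{\widetilde{\Omega}}/\Kinf, E[p^{\infty}])\xrightarrow{\theta_0}\bigoplus_{v\in\widetilde{\Omega}\setminus\Omega_0}\cH_v(E/\Kinf)
\]
is surjective. Granting this, $\Sel^{\Omega_0}(E/\Kinf)$ is the image (under a surjection with divisible-type source) of a $\Lambda$-module with no proper finite-index submodule; the quotient then also has no proper finite-index $\Lambda$-submodule, since a finite-index submodule of the quotient would pull back to a finite-index submodule of $H^1(K_{\widetilde{\Omega}}/\Kinf, E[p^{\infty}])$ properly containing $\ker\theta_0$ — wait, more carefully: a proper finite-index $\Lambda$-submodule $N\subsetneq \Sel^{\Omega_0}$ pulls back to a $\Lambda$-submodule of $H^1$ of finite index that properly contains the full preimage of $\Sel^{\Omega_0}$ only if that preimage is itself the whole $H^1$, which it is not in general; so instead one argues that $\Sel^{\Omega_0}/N$ finite forces, via the exact sequence $0\to \Sel^{\Omega_0}\to H^1 \to \bigoplus \cH_v$, a contradiction with $H^1$ having no finite quotient. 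I will phrase this last deduction exactly as in \cite[Prop.\ 4.9]{Gre99}.

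Concretely, the steps are: (1) record that $\SelpK$ is $\Lambda$-cotorsion (Assumption \ref{cotorsionass}) and $E(\Kinf)[p^{\infty}]=0$, so Lemma \ref{no finite index lemma} applies to $H^1(K_{\widetilde{\Omega}}/\Kinf, E[p^{\infty}])$; (2) prove surjectivity of $\theta_0$ by writing the classical Selmer-defining map as the composite of $\theta_0$ with the projection $\bigoplus_{v\in\widetilde\Omega}\cH_v\to\bigoplus_{v\in\widetilde\Omega\setminus\Omega_0}\cH_v$ and invoking Lemma \ref{analogue of HM 2.3}, which gives that the classical map $H^1(K_S/\Kinf,E[p^\infty])\to\bigoplus_{v\in S}J_v$ is already surjective — after matching the sets $S$ and $\widetilde{\Omega}$ and using that $\cH_v=J_v$ (the Lemma in Section \ref{section congruent galois representations} identifying them); surjectivity onto a sub-sum is then automatic; (3) conclude via the Greenberg-style argument that $\Sel^{\Omega_0}(E/\Kinf)$ inherits the no-proper-finite-index-$\Lambda$-submodule property from $H^1(K_{\widetilde{\Omega}}/\Kinf, E[p^{\infty}])$.

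The main obstacle is step (2): I must be careful that the surjectivity result of Lemma \ref{analogue of HM 2.3} is stated for the set $S$ attached to a single curve, whereas here $\widetilde{\Omega}$ is the union of bad-reduction sets for $E_1$ and $E_2$ together with $S_p$; but since $\widetilde{\Omega}$ is still a finite set of primes containing $S_p$ and all bad primes of $E$, it is a legitimate choice of "$S$" for $E$, so Lemma \ref{analogue of HM 2.3} applies verbatim with $K_S$ replaced by $K_{\widetilde{\Omega}}$ — here I use $E(\Kinf)[p^\infty]$ finite and $\SelpK$ $\Lambda$-cotorsion, exactly the hypotheses already in force. The only subtlety worth spelling out is that surjectivity of $\theta$ onto the full sum $\bigoplus_{v\in\widetilde\Omega}\cH_v$ trivially yields surjectivity of $\theta_0$ onto the sub-sum $\bigoplus_{v\in\widetilde\Omega\setminus\Omega_0}\cH_v$, since the latter is a direct summand and $\theta_0$ is the corresponding component of $\theta$. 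Once surjectivity of $\theta_0$ is in hand, the rest is a formal diagram chase identical to the cyclotomic case, and I would simply cite \cite[Proposition 4.9]{Gre99} for that final deduction.
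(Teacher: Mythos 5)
Your step (3) contains a genuine gap: the property ``no proper finite-index $\Lambda$-submodule'' does \emph{not} pass from $H^1(K_{\widetilde{\Omega}}/\Kinf,E[p^\infty])$ to its $\Lambda$-\emph{submodule} $\Sel^{\Omega_0}(E/\Kinf)=\ker\theta_0$, and surjectivity of $\theta_0$ does not help. Dualizing, you have an exact sequence
\[
0\longrightarrow \Bigl(\bigoplus_{v\in\widetilde{\Omega}\setminus\Omega_0}\cH_v\Bigr)^{\vee}\longrightarrow \X \longrightarrow \Sel^{\Omega_0}(E/\Kinf)^{\vee}\longrightarrow 0
\]
with $\X:=H^1(K_{\widetilde{\Omega}}/\Kinf,E[p^\infty])^{\vee}$ having no nonzero finite $\Lambda$-submodule; but a quotient of such an $\X$ can perfectly well acquire finite submodules (already $\X=\Lambda$, $Z=(p,T)\subset\Lambda$ produces the finite quotient $\F_p$). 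Your own hesitation in the middle of the proposal (``$\ldots$ which it is not in general; so instead one argues $\ldots$'') is where the argument actually breaks: there is no ``formal diagram chase'' that closes this. Relatedly, the Greenberg reference you want for the Selmer-group statement is Proposition~4.14 of \cite{Gre99}, not Proposition~4.9; the latter is the $H^1$ statement encoded here as Lemma~\ref{no finite index lemma}, while the former is a substantially harder result.

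The paper's proof follows Greenberg's Proposition~4.14 and is structurally different from yours. It introduces the relaxed and strict imprimitive Selmer groups $\Sel^{\op{rel}}(E/\Kinf)$ and $\Sel^{\op{str}}(E/\Kinf)$, uses the surjectivity of Lemma~\ref{analogue of HM 2.3} to show $\Sel^{\op{rel}}/\Sel^{\Omega_0}\simeq\bigoplus_{v\in S_p}\cH_v(\Kinf,E[p^\infty])$, and invokes Lemma~\ref{lemma: Hp is cofree} to see this quotient is $\Lambda$-cofree; by \cite[Lemma~2.6]{GV00} this reduces the problem to showing $\Sel^{\op{rel}}(E/\Kinf)$ has no proper finite-index $\Lambda$-submodule. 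That last step is then established by a twisting argument: one considers the modules $A_s=E[p^\infty]\otimes\kappa^s$, proves finiteness of the associated strict Selmer groups over $K$ for all but finitely many $s$, and uses global duality (\cite[Propositions~4.13--4.14]{Gre99}) to deduce surjectivity of the relevant global-to-local map at a good specialization, from which the conclusion follows. None of this machinery appears in your proposal, and without it the argument does not close.
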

\begin{proof} We adapt the proof of \cite[Proposition~4.14]{Gre99}, which is due to Greenberg.
Recall that $\widetilde{\Omega}$ is a finite set of primes containing $S_p$ and the primes at which $E_1$ or $E_2$ has bad reduction.
Consider the \emph{strict} and \emph{relaxed} $p$-primary Selmer groups defined as follows
\[
\Sel^{\op{rel}}(E/\Kinf):=\op{ker}\left\{H^1(K_{\widetilde{\Omega}}/\Kinf,E[p^{\infty}])\rightarrow \bigoplus_{v\in \widetilde{\Omega}\setminus (\Omega_0\cup S_p)} \mathcal{H}_v(\Kinf, E[p^{\infty}])\right\},
\]
\[
\Sel^{\op{str}}(E/\Kinf):=\op{ker}\left\{\Sel^{\op{rel}}(E/\Kinf)\rightarrow \bigoplus_{v\in \Omega_0\cup S_p} \prod_{v'|v} H^1(K_{\infty, v'}, E[p^{\infty}])\right\}.
\]
Lemma~\ref{analogue of HM 2.3} asserts that the restriction map
\[
H^1(K_{\widetilde{\Omega}}/\Kinf,E[p^{\infty}])\rightarrow \bigoplus_{v\in \widetilde{\Omega}} \mathcal{H}_v(\Kinf, E[p^{\infty}])
\]
is surjective.
Therefore, the restriction map
\[
\Sel^{\op{rel}}(E/\Kinf)\rightarrow \bigoplus_{v\in \Omega_0\cup S_p} \mathcal{H}_v(\Kinf, E[p^{\infty}])
\]
is also surjective.
Observe that the kernel of the restriction map 
\[
\Sel^{\op{rel}}(E/\Kinf)\rightarrow \bigoplus_{v\in S_p} \mathcal{H}_v(\Kinf, E[p^{\infty}])
\]
is $\Sel^{\Omega_0}(E/\Kinf)$.
Therefore,
\[
\Sel^{\op{rel}}(E/\Kinf)/\Sel^{\Omega_0}(E/\Kinf)\simeq \bigoplus_{v\in S_p} \mathcal{H}_v(\Kinf, E[p^{\infty}])
\]
is a cofree $\Lambda$-module by Lemma~\ref{lemma: Hp is cofree}.
By \cite[Lemma~2.6]{GV00}, to complete the proof of the proposition, it suffices to show that $\Sel^{\op{rel}} (E/\Kinf)$ has no proper finite index $\Lambda$-submodules
But, $\Sel^{\op{str}}(E/\Kinf)^{\vee}$ is a quotient of $\Sel^{\Omega_0} (E/\Kinf)^{\vee}$; so it must also be $\Lambda$-torsion.

Recall that $\chi$ denotes the $p$-adic cyclotomic character.
Let $\omega$ be the Teichm\"uller lift of the mod-$p$ cyclotomic character and $\kappa:=\omega^{-1}\chi$.
For $s\in \Z$, let $A_s$ denote the twisted Galois module $E[p^{\infty}]\otimes \kappa^s$.
Since $E(K)[p]=0$ by assumption, Corollary~\ref{finiteness corollary} asserts that $H^0(\Kinf, E[p^{\infty}])=0$.
Further, since $A_s$ and $E[p^{\infty}]$ are isomorphic as $\Gal(\closure{K}/\Kinf)$-modules, it follows that $H^0(\Kinf, A_s)=0$.
For any subfield $\mathcal{K}$ of $\Kinf$, and $v$ a prime of $K$ which does not divide $p$, set 
\[
\mathcal{H}_v(\mathcal{K}, A_s)=\prod_{\eta|v} H^1(\mathcal{K}_{\eta}, A_s)/(A_s(\mathcal{K}_{\eta})\otimes \Q_p/\Z_p),
\]
where $\eta$ ranges over the primes of $\mathcal{K}$ above $v$.
Define the products
\begin{align*}
 P^{\widetilde{\Omega}, \op{rel}}(\mathcal{K}, A_s)&:=\bigoplus_{v\in \widetilde{\Omega}\setminus (\Omega_0\cup S_p)} \mathcal{H}_v(\mathcal{K}, A_s),\\
 P^{\widetilde{\Omega}, \op{str}}(\mathcal{K}, A_s)&:=\bigoplus_{v\in \widetilde{\Omega}\setminus (\Omega_0\cup S_p)} \mathcal{H}_v(\mathcal{K}, A_s)\times \bigoplus_{v\in \Omega_0\cup S_p} \left\{\prod_{\eta|v} H^1(\mathcal{K}_{\eta}, A_s)\right\}.
\end{align*}
Let $S_{A_s}^{\op{rel}}(\mathcal{K})$ and $S_{A_s}^{\op{str}}(\mathcal{K})$ be the Selmer groups defined as follows
\begin{align*}
 S_{A_s}^{\op{rel}}(\mathcal{K})&:=\ker\left(H^1(K_{\widetilde{\Omega}}/\mathcal{K}, A_s)\rightarrow P^{\widetilde{\Omega}, \op{rel}}(\mathcal{K}, A_s)\right),\\
 S_{A_s}^{\op{str}}(\mathcal{K})&:=\ker\left(H^1(K_{\widetilde{\Omega}}/\mathcal{K}, A_s)\rightarrow P^{\widetilde{\Omega}, \op{str}}(\mathcal{K}, A_s)\right).
\end{align*}
Since $\Sel^{\op{str}}(E/\Kinf)$ is $\Lambda$-cotorsion, we have that $S_{A_s}^{\op{str}}(\Kinf)^{\Gamma}$ is finite for all but finitely many values of $s$.
Hence, $S_{A_s}^{\op{str}}(K)$ is finite for all but finitely many values of $s$.
As in the proof of \cite[Proposition~4.14]{Gre99}, write $M=A_s$ and $M^* =A_{-s}$.
Write $S_M(K)$ for the Selmer group $S_{A_s}^{\op{rel}}(K)$.
By the discussion in \cite[p.~100]{Gre99}, we have that $S_{M^*}(K)$ is the strict Selmer group $S_{A_{-s}}^{\op{str}}(K)$.
Let $s$ be an integer such that $S_{M^* }(K)$ is finite.
Since $S_{M^*}(K)$ is finite and $M^* (K)=0$, the map $H^1(K_{\widetilde{\Omega}}/K, M)\rightarrow P^{\widetilde{\Omega}, \op{rel}}(K, M)$ is surjective (see \cite[Proposition~4.13]{Gre99}).
Arguing as in \cite[Proposition~4.14]{Gre99}, we see that $\Sel^{\op{rel}} (E/\Kinf)$ has no proper finite index $\Lambda$-submodules.
This completes the proof.
\end{proof}

\begin{Proposition}
\label{prop: compare lambdaS0}
Let $E_1$ and $E_2$ be $p$-congruent elliptic curves over $K$ such that
\begin{enumerate}
 \item Assumptions \ref{finitely decomposed assumption} and \ref{cotorsionass} are satisfied,
 \item $E_1(K)[p]=0$ (or equivalently, $E_2(K)[p]=0$).
\end{enumerate}
Then the imprimitive $\lambda$-invariants coincide, i.e.,
\[\lambda^{\Omega_0}(E_1/\Kinf)=\lambda^{\Omega_0}(E_2/\Kinf).\]
Equivalently,
\[
\lambda(E_1/\Kinf) + \sum_{v\in \Omega_0} \sigma_{E_1}^{(v)} = \lambda(E_2/\Kinf) + \sum_{v\in \Omega_0} \sigma_{E_2}^{(v)}.
\]
\end{Proposition}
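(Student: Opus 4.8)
The plan is to compare the two residual imprimitive Selmer groups, which are already known to be isomorphic by Corollary \ref{resiso}, and to transfer a statement about $\F_p$-dimensions into one about $\Z_p$-coranks using the absence of finite-index $\Lambda$-submodules. First I would observe that by \eqref{relating im primitive and classical lambda invariant} it suffices to prove $\lambda^{\Omega_0}(E_1/\Kinf) = \lambda^{\Omega_0}(E_2/\Kinf)$, and since $\lambda^{\Omega_0}(E_i/\Kinf) = \corank_{\Z_p}\Sel^{\Omega_0}(E_i/\Kinf)$, it is enough to show that this corank can be read off from the $\F_p$-dimension of $\Sel^{\Omega_0}(E_i/\Kinf)[p]$.

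The key mechanism is the following. Set $X_i := \Sel^{\Omega_0}(E_i/\Kinf)^\vee$; by Assumption \ref{cotorsionass} together with Lemma \ref{localmuzero} (applied to the quotient \eqref{quotient}) and Theorem \ref{resisothm}, each $X_i$ is a finitely generated $\Z_p$-module, i.e.\ $\Sel^{\Omega_0}(E_i/\Kinf)$ is cofinitely generated over $\Z_p$ and $\mu = 0$. For such a module, $\corank_{\Z_p}\Sel^{\Omega_0}(E_i/\Kinf) = \dim_{\F_p}\left(X_i/(p, T)X_i\right) = \dim_{\F_p}\left(\Sel^{\Omega_0}(E_i/\Kinf)[p]\right)^{\Gamma}$ \emph{provided} $X_i$ has no nonzero finite $\Lambda$-submodule — equivalently, by duality, provided $\Sel^{\Omega_0}(E_i/\Kinf)$ has no proper finite-index $\Lambda$-submodule. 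That last hypothesis is exactly the content of Proposition \ref{nofinitelambdasubs}. So the proof runs: invoke Proposition \ref{nofinitelambdasubs} to conclude that $X_i$ is a free $\Z_p$-module (a finitely generated $\Z_p$-module with no finite $\Lambda$-submodule, on which $\Gamma$ acts, is $\Z_p$-torsion-free), deduce that $\corank_{\Z_p}\Sel^{\Omega_0}(E_i/\Kinf) = \rank_{\Z_p} X_i = \dim_{\F_p}(X_i/pX_i) = \dim_{\F_p}\Sel^{\Omega_0}(E_i/\Kinf)[p]$, then apply Corollary \ref{resiso} to get $\dim_{\F_p}\Sel^{\Omega_0}(E_1/\Kinf)[p] = \dim_{\F_p}\Sel^{\Omega_0}(E_2/\Kinf)[p]$, and finally read off the equivalent reformulation from \eqref{relating im primitive and classical lambda invariant}.

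I expect the main obstacle to be the bookkeeping in the step ``$X_i$ has no finite $\Lambda$-submodule $\Rightarrow$ $X_i$ is $\Z_p$-free, hence $\corank$ equals $\F_p$-dimension of the $p$-torsion of the Selmer group.'' One has to be careful that ``no proper finite index $\Lambda$-submodule'' of $\Sel^{\Omega_0}$ dualizes to ``no nonzero finite $\Lambda$-submodule'' of $X_i$; that the $p$-torsion subgroup of a $\Z_p$-cofinitely generated module with $\mu=0$ is genuinely finite only after killing the finite part; and that $\Sel^{\Omega_0}(E_i/\Kinf)[p]$ really is the Pontryagin dual of $X_i/pX_i$, with no $\Gamma$-coinvariants getting in the way — which is why the finite-index submodule statement (and not just $\mu = 0$) is essential. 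Once that is nailed down, the rest is formal: the congruence input is entirely in Corollary \ref{resiso}, and the passage between imprimitive and classical $\lambda$-invariants is the already-established identity \eqref{relating im primitive and classical lambda invariant}.
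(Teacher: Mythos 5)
Your proposal follows the same route as the paper: Proposition \ref{nofinitelambdasubs} supplies the absence of proper finite-index $\Lambda$-submodules, which makes $\Sel^{\Omega_0}(E_i/\Kinf)$ a cofree $\Z_p$-module, so its $\Z_p$-corank equals $\dim_{\F_p}\Sel^{\Omega_0}(E_i/\Kinf)[p]$, and Corollary \ref{resiso} equates these $\F_p$-dimensions. (Your opening paragraph briefly conflates $X_i/pX_i$ with $X_i/(p,T)X_i$ and inserts a spurious $\Gamma$-invariant, but the ``so the proof runs'' paragraph corrects this to $X_i/pX_i = \bigl(\Sel^{\Omega_0}(E_i/\Kinf)[p]\bigr)^\vee$, which is exactly the identity the argument needs.)
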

\begin{proof}
By Assumption~\ref{cotorsionass}, $\Sel^{\Omega_0}(E_i/\Kinf)[p]$ is finite for $i=1,2$, and by Corollary~\ref{p-torsion of S_0 fine selmer are iso}, 
\begin{equation}
\label{eqn: p-torsion of Sel_S0 are isomorphic}
\Sel^{\Omega_0}(E_1/\Kinf)[p]\simeq \Sel^{\Omega_0}(E_2/\Kinf)[p].
\end{equation}
Proposition~\ref{nofinitelambdasubs} assets that $\Sel^{\Omega_0}(E_i/\Kinf)$ has no proper $\Lambda$-submodules of finite index for $i=1,2$.
Hence they are cofree $\Z_p$-modules.
Therefore, 
\[\Sel^{\Omega_0}(E_i/\Kinf) \simeq (\Q_p/\Z_p)^{\lambda_i},\]
where $\lambda_{i}:=\lambda^{\Omega_0}(E_i/\Kinf)$.
As a result,
\[
\lambda^{\Omega_0}(E_i/\Kinf)=\op{dim}_{\F_p} \left(\Sel^{\Omega_0}(E_i/\Kinf)[p]\right).
\]
The proposition follows from \eqref{eqn: p-torsion of Sel_S0 are isomorphic}.
\end{proof}

\section{Large $\lambda$-Invariant}
\label{section: large lambda invariant}
In this section, we use results from the previous section to construct elliptic curves with \emph{large} $5$-adic (anti-cyclotomic) $\lambda$-invariants.
All computations in this section can be found at \href{https://www.anweshray.com/_files/ugd/14df41_46a0a8c679a84902a43e5bc115df7e20.pdf}{the following hyperlink}.
Throughout this section, we work in the following setting.
Let $K=\Q(\sqrt{-1})$, $p=5$, and $\Kan$ be the anticyclotomic $\Z_5$-extension of $K$.
Recall that $\Sigma$ is the set of primes in $K$ that are finitely decomposed in $\Kan$.
Moreover, a prime $v\nmid 5$ lies in $\Sigma$ if and only if it is a split prime in $K$.
If $v\in \Sigma\setminus \{5\}$ and $v'\in v(\Kan)$, then $\Kan_{v'}$ is an unramified $\Z_5$-extension of $v$.
Therefore, $\Kan_{v'}=K_v^{\op{cyc}}$.

\begin{Lemma}\label{last lemma}
Let $E_1$ and $E_2$ be $5$-congruent elliptic curves defined over $K$.
Let $v\nmid 5$ be a prime such that $v\in \Sigma$.
Then the following assertions hold
\begin{enumerate}
\item If $E_1$ has good reduction at $v$ and $E_2$ has bad reduction at $v$, then
\[
\corank_{\Z_p} \cH_{v}\left(\Kan, E_1[5^\infty]\right) - \corank_{\Z_p} \cH_{v}\left(\Kan, E_2[5^\infty]\right) \geq 0.
\]
\item Furthermore, if $\op{Frob}_{v}$ acts trivially on $E_1[5]$, then
\[
\corank_{\Z_p} \cH_{v}\left(\Kan, E_1[5^\infty]\right) - \corank_{\Z_p} \cH_{v}\left(\Kan, E_2[5^\infty]\right) >0.
\] Furthermore, $E_2$ has split multiplicative reduction at $v$.
\end{enumerate}
\end{Lemma}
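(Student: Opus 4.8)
The plan is to reduce everything to a local computation at $v$ and then to extract the difference of coranks from the mod‑$5$ representation together with the reduction type of $E_2$.

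\emph{Reduction to a local problem.} Since $v\in\Sigma$, only finitely many primes $v'$ of $\Kan$ lie above $v$, and (by Chebotarev) their number $g_v$ is independent of the elliptic curve; moreover, by the discussion preceding the lemma, each localization $\Kan_{v'}$ equals $L:=K_v^{\op{cyc}}$, the unramified $\Z_5$-extension of $K_v$. Hence $\corank_{\Z_5}\cH_v(\Kan,E_i[5^\infty])=g_v\cdot c_i$ with $c_i:=\corank_{\Z_5}H^1(L,E_i[5^\infty])$, and it suffices to compare $c_1$ and $c_2$. Let $\ell\ne 5$ be the residue characteristic of $v$, let $\overline{k}$, $k_\infty$ denote the residue fields of $\overline{K_v}$, $L$, and set $G_L:=\op{Gal}(\overline{K_v}/L)$. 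Because $L/K_v$ is unramified, $I_L=I_{K_v}$ and $\op{Gal}(\overline{k}/k_\infty)$ is pro-(prime-to-$5$); the Hochschild--Serre sequence for $1\to I_{K_v}\to G_L\to\op{Gal}(\overline{k}/k_\infty)\to 1$ thus degenerates on $5$-torsion coefficients, and since $\mathrm{cd}_5(I_{K_v})\le 1$ (as $\ell\ne 5$) we get $\mathrm{cd}_5(G_L)\le 1$. Consequently $H^1(L,E_i[5^\infty])$ is $5$-divisible and $H^2(L,E_i[5])=0$, so the long exact sequence attached to $0\to E_i[5]\to E_i[5^\infty]\xrightarrow{\ 5\ }E_i[5^\infty]\to 0$ gives
\[
c_i=\dim_{\F_5}H^1(L,E_i[5])-\dim_{\F_5}E_i[5]^{G_L}+\corank_{\Z_5}E_i(L)[5^\infty].
\]
As $E_1[5]\simeq E_2[5]$ as $G_{K_v}$-modules (hence as $G_L$-modules), the first two terms agree for $i=1,2$, so
\[
\corank_{\Z_5}\cH_v(\Kan,E_1[5^\infty])-\corank_{\Z_5}\cH_v(\Kan,E_2[5^\infty])=g_v\bigl(\corank_{\Z_5}E_1(L)[5^\infty]-\corank_{\Z_5}E_2(L)[5^\infty]\bigr).
\]

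\emph{Proof of (1).} Good reduction of $E_1$ at $v$ makes $E_1[5]$, hence $E_2[5]$, unramified at $v$. Then $I_{K_v}$ acts trivially on $E_1[5^\infty]$ and $\op{Gal}(\overline{k}/k_\infty)$ acts on it through a finite quotient of order prime to $5$; taking $G_L$-invariants is therefore exact on $5$-torsion $G_L$-modules and commutes with reduction mod $5$, so $E_1(L)[5^\infty]$ is $5$-divisible with $E_1(L)[5^\infty][5]=E_1[5]^{G_L}$, whence $\corank_{\Z_5}E_1(L)[5^\infty]=\dim_{\F_5}E_1[5]^{G_L}=\dim_{\F_5}E_2[5]^{G_L}$. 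On the other hand, for any cofinitely generated $\Z_5$-module $M$ one has $\corank_{\Z_5}M\le\dim_{\F_5}M[5]$, so $\corank_{\Z_5}E_2(L)[5^\infty]\le\dim_{\F_5}E_2(L)[5^\infty][5]=\dim_{\F_5}E_2[5]^{G_L}$. Combining, $\corank_{\Z_5}E_1(L)[5^\infty]\ge\corank_{\Z_5}E_2(L)[5^\infty]$, which is assertion (1).

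\emph{Proof of (2).} The extra hypothesis says $\op{Frob}_v$ acts trivially on $E_1[5]$, so $E_1[5]$, and hence $E_2[5]$, is the trivial $G_{K_v}$-module; in particular $\dim_{\F_5}E_2[5]^{G_L}=2$, and by the computation above $\corank_{\Z_5}E_1(L)[5^\infty]=2$. Since $E_2$ has bad reduction at $v$ and $5\ne\ell$, the criterion of N\'eron--Ogg--Shafarevich forces $I_{K_v}$ to act nontrivially on $T_5(E_2)$, hence on $E_2[5^\infty]$; therefore $E_2(L)[5^\infty]=E_2[5^\infty]^{G_L}$ is a \emph{proper} submodule of the $5$-divisible corank-$2$ module $E_2[5^\infty]$, which forces $\corank_{\Z_5}E_2(L)[5^\infty]\le 1<2$, giving the strict inequality. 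For the reduction type I would show that every remaining possibility contradicts triviality of $E_2[5]$ as a $G_{K_v}$-module: if $E_2$ had potentially good additive reduction, the nontrivial finite image of $I_{K_v}$ in $\op{Aut}(T_5(E_2))$ would have order prime to $5$ (otherwise it contains a nontrivial unipotent of order $5$, which is $\ne 1$ mod $5$, ramifying $E_2[5]$), and a prime-to-$5$ subgroup of $\op{Aut}(T_5(E_2))$ injects modulo $5$, again ramifying $E_2[5]$; if $E_2$ had potentially multiplicative additive reduction, the ramified quadratic character twisting $E_2$ into a Tate curve sends some $\sigma\in I_{K_v}$ to $-1$, acting on $E_2[5]$ as $-1$ times a unipotent matrix, hence nontrivially mod $5$; and if $E_2$ had non-split multiplicative reduction, $\op{Frob}_v$ would act on $E_2[5]$ by an upper-triangular matrix with lower-right entry $-1\ne 1$. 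Ruling these out leaves split multiplicative reduction.

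The step I expect to be most delicate is this last one --- checking that \emph{every} form of additive and non-split multiplicative reduction is incompatible with $E_2[5]$ being the trivial $G_{K_v}$-module, with due care for the small residue characteristics $\ell\in\{2,3\}$ where wild inertia can intervene. The local cohomological input ($\mathrm{cd}_5(G_L)\le 1$, $5$-divisibility of $H^1(L,E_i[5^\infty])$, and the elementary corank/mod-$5$ inequalities) is routine, but must be set up carefully, since the clean ``corank $=$ dimension of $5$-torsion'' identities driving the comparison rest on it.
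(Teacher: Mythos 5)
Your proposal is correct and essentially self-contained where the paper simply cites external sources. For part (1) and the strict inequality in part (2), the paper defers entirely to the proof of \cite[Lemma 3.1]{Kim09}; what you have written out --- reducing the local corank comparison to $\corank_{\Z_5}E_i(K_v^{\cyc})[5^\infty]$ via $\mathrm{cd}_5(G_L)\le 1$ and the Kummer long exact sequence --- is the anticyclotomic translation of that local computation, and the bookkeeping (the identity $c_i=\dim_{\F_5}H^1(L,E_i[5])-\dim_{\F_5}E_i[5]^{G_L}+\corank_{\Z_5}E_i(L)[5^\infty]$, the divisibility of $E_1[5^\infty]^{G_L}$ when the action is through a finite prime-to-$5$ group, and $\corank\le\dim[5]$) is carried out correctly. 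One cosmetic slip: the number $g_v$ of primes of $\Kan$ above $v$ depends only on the extension $\Kan/K$, not on any elliptic curve or on Chebotarev; that parenthetical should just be deleted.

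For the final step (determining the reduction type of $E_2$), your route is genuinely different from the paper's. The paper rules out additive reduction by invoking \cite[Lemma 2.3]{A17}, and then observes (via \cite[Lemma 2.11]{A17}) that if $E_2$ had \emph{non-split} multiplicative reduction the two local coranks would be \emph{equal}, so the strict inequality already established forces the reduction to be split multiplicative. You instead rule out each other reduction type directly by a mod-$5$ representation-theoretic contradiction: additive reduction would force $E_2[5]$ to be ramified (either via a nontrivial element of order $5$ or via the injectivity of the prime-to-$5$ image into $\GL(\F_5)$), potentially multiplicative additive reduction would produce $-1$ in the inertia action on $E_2[5]$, and non-split multiplicative reduction would give $\op{Frob}_v$ a diagonal entry $-1\ne 1$ on $E_2[5]$ --- all contradicting triviality of $E_2[5]$ as a $G_{K_v}$-module. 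Both routes are valid; yours is more uniform (one hypothesis, three contradictions) and avoids re-using the corank computation, while the paper's is shorter because it leans on citations. Finally, the hesitation you express about small residue characteristics $\ell\in\{2,3\}$ is unnecessary in this paper's setting: here $K=\Q(\sqrt{-1})$, $p=5$, and every $v\in\Sigma$ with $v\nmid 5$ has residue characteristic $\ell\equiv 1\pmod 4$ (primes above $2$ and $3$ are ramified resp.\ inert in $K$ and hence not in $\Sigma$), so wild inertia in characteristic $2$ or $3$ never intervenes.
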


\begin{proof}
For (1) and the first assertion of (2), we refer the reader to the proof of \cite[Lemma~3.1]{Kim09}.
For the second assertion of (2), it follows from \cite[Lemma~2.3]{A17} that $E_2$ cannot have additive reduction at $v$.
Moreover, by \cite[Lemma~2.11]{A17}, if $E_2$ has non-split multiplicative reduction at $v$, then,
\[
\corank_{\Z_5} \cH_{v}\left(\Kan, E_1[5^\infty]\right) = \corank_{\Z_5} \cH_{v}\left(\Kan, E_2[5^\infty]\right).
\]
Hence, the strict inequality forces $E$ to have split multiplicative reduction at $v$.
\end{proof}

Consider the elliptic curve $E: y^2 = x^3 - x$.
This is the elliptic curve \href{https://www.lmfdb.org/EllipticCurve/Q/32a2/}{32a2} (Cremona label).
This is a rank 0 elliptic curve with good ordinary reduction at $p=5$ and CM by $\mathcal{O}_K$.
Rubin has proved that for all primes $q$, the $q$-part of the Selmer group is trivial (cf. \cite[p.~188]{Kim09}).
Over $K$, the elliptic curve $E$ has Mordell-Weil rank equal to $0$ and $\Sha(E/K)[5^{\infty}]=0$.
Since $E(K)[5]=0$, it follows that $\Sel(E/K)=0$.

\begin{Lemma}
\label{trivial Selmer}
The 5-primary Selmer group $\op{Sel}(E/\Kan)$ is equal to $0$.
\end{Lemma}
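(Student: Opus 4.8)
The plan is to deduce the vanishing of $\Sel(E/\Kan)$ from the vanishing of $\Sel(E/K)=0$ together with a control-type argument along the anticyclotomic tower. First I would recall that $E$ has good ordinary reduction at $p=5$ and that $E(K)[5]=0$; since $\Gamma=\Gal(\Kan/K)$ is pro-$p$, it follows (as in Corollary \ref{finiteness corollary}(2)) that $E(\Kan)[5^\infty]=0$, so in particular $E(K_n)[5^\infty]=0$ for every finite layer $K_n$. This is exactly the hypothesis needed to run the Control Theorem (see \cite[Theorem 4.1]{Gre_PCMS} or \cite[Theorem 1]{Gre03}): the restriction map
\[
\Sel(E/K_n)\longrightarrow \Sel(E/\Kan)^{\Gamma_n}
\]
has finite kernel and cokernel, with the sizes controlled uniformly in $n$; in fact, because $E(\Kan)[5^\infty]=0$, the kernel is trivial, and the cokernel is controlled by local factors at the (finitely many, since $5$ is the only ramified prime and $E$ has good reduction there) primes where the tower is ramified.

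The key step is then the following: since $\Sel(E/K)=0$, the Selmer group $\Sel(E/\Kan)$ is a cotorsion $\Lambda$-module with $\mu$-invariant $0$ — indeed Assumption \ref{fg} holds for $E$ over $\Kan$ by Remark \ref{cotorsion + mu=0 remark} (or directly from Rubin's results on the CM elliptic curve $32a2$, which give that the $5$-part of the relevant Iwasawa module vanishes). Having $\Sel(E/\Kan)$ cofinitely generated over $\Z_5$ with no proper finite-index $\Lambda$-submodules (Lemma \ref{no finite index lemma} applies, since $E(\Kan)[5^\infty]=0$ and $\SelpK$ is $\Lambda$-cotorsion) means that $\Sel(E/\Kan)^\vee$ is a free $\Z_5$-module of rank $\lambda:=\lambda(E/\Kan)$. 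If $\lambda>0$, then $\Sel(E/\Kan)^{\Gamma}$ would be infinite (it would contain $(\Q_5/\Z_5)$ as a quotient of the $\Gamma$-coinvariants picks off at least the corank), contradicting the Control Theorem bound which forces $\Sel(E/\Kan)^{\Gamma}$ to be finite (comparable to $\Sel(E/K)=0$ up to finite error). Hence $\lambda=0$ and $\Sel(E/\Kan)$ is finite; being $5$-divisible as a quotient of a divisible module (or, alternatively, applying Lemma \ref{no finite index lemma} to see it has no finite submodules), it must be $0$.

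The main obstacle is making the control argument airtight: one must verify that the cokernel of $\Sel(E/K)\to\Sel(E/\Kan)^{\Gamma}$ is genuinely finite (not merely bounded), which amounts to checking the local conditions at the prime above $5$ — here the deep ramification of $K_{\infty,v'}/K_v$ and the good ordinary reduction let one invoke the local computation used in Lemma \ref{kergammav for p} to see the relevant local cohomology is controlled. An alternative, perhaps cleaner route avoiding even this: invoke directly the theorem of Rubin (via the Iwasawa Main Conjecture for imaginary quadratic fields) that the characteristic ideal of $\Sel(E/\Kan)^\vee$ is generated by a $p$-adic $L$-function which, for this rank-$0$ CM curve with trivial Selmer group over $K$, is a $5$-adic unit; combined with $\mu=0$ this forces $\lambda(E/\Kan)=0$, and then the no-finite-submodule property (Lemma \ref{no finite index lemma}) upgrades finiteness to vanishing. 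I would present the control-theorem argument as the main line and mention the Main-Conjecture route as a remark.
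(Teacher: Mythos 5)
Your proposal follows the same broad strategy as the paper (a control-type argument along the anticyclotomic tower), but it detours through finiteness of $\Sel(E/\Kan)^\Gamma$ and $\lambda=0$, and then tries to upgrade ``finite'' to ``zero''. The paper is more direct: it shows that the restriction map $\Sel(E/K)\to \Sel(E/\Kan)^{\Gamma}$ is \emph{surjective} (not merely of finite cokernel), by checking that every local map $\gamma_v: J_v(E/K)\to J_v(E/\Kan)$ is injective. For $v\nmid 5$ the kernel of $\gamma_v$ has order dividing the $5$-part of the Tamagawa number $c_v$, and all Tamagawa numbers of $32a2$ are prime to $5$; for $v|5$, since $5$ splits in $K$ one has $\F_v=\F_5$ and $\widetilde{E}(\F_5)\simeq \Z/2\times\Z/4$ has no $5$-torsion, giving injectivity of $\gamma_v$ there. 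Since $\Sel(E/K)=0$, surjectivity forces $\Sel(E/\Kan)^{\Gamma}=0$, and Nakayama's Lemma concludes. You never need $\mu$-invariant or $\lambda$-invariant information, nor any divisibility or no-finite-submodule statement for the Selmer group itself.

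Your upgrade step has concrete gaps. First, the claim that $\Sel(E/\Kan)$ is ``$5$-divisible as a quotient of a divisible module'' is incorrect: the Selmer group is a \emph{submodule} of $H^1(K_S/\Kan, E[p^\infty])$ (a kernel of the localization map), not a quotient, and divisibility does not pass to submodules. Second, Lemma \ref{no finite index lemma} gives the no-finite-index-$\Lambda$-submodule property for $H^1(K_{\widetilde{\Omega}}/\Kinf, E[p^{\infty}])$, not for $\Sel(E/\Kinf)$; transferring it to the Selmer group requires the further argument of Greenberg's Proposition 4.9, which the paper runs only for the imprimitive Selmer group (Proposition \ref{nofinitelambdasubs}) under additional hypotheses. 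Third, even granting that $\Sel(E/\Kan)$ is $\Lambda$-cotorsion with $\mu=0$, knowing only that it is finite does not give vanishing without such a no-finite-submodule input; finiteness of $\Sel(E/\Kan)^\Gamma$ (as opposed to its exact vanishing) is consistent with $\Sel(E/\Kan)$ being a nonzero finite group. The paper's argument avoids all of this by proving the stronger statement that the cokernel of the control map is literally trivial via the two elementary local checks above, after which Nakayama immediately yields the result. The Main-Conjecture route you mention at the end would work in principle but is a far heavier tool than the explicit Tamagawa-number and $\widetilde{E}(\F_5)$ computations the paper uses.
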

\begin{proof}
To prove the lemma we show that the natural map 
\[\op{Sel}(E/K)\rightarrow \op{Sel}(E/\Kan)^{\Gamma}\] is surjective.
It suffices to show that the local map
\[\gamma_v: J_v(E/K)\rightarrow J_v(E/\Kan)\]
is injective at all primes.
For $v\nmid 5$, the kernel of $\gamma_v$ has order at most $c_v^{(5)}$, where $c_v$ is the Tamagawa number and $c_v^{(5)}$ its $5$-primary part, see \cite[Proposition~4.1]{Gre03}.
For the given elliptic curve $E$, one checks that $c_v^{(5)}=0$ for all primes $v$.
Next, consider $\ker \gamma_v$ when $v|5$.
If $\widetilde{E}(\F_v)[5]=0$, then the map $\gamma_v$ is injective (see \cite[Proposition~3.5]{GCEC}).
Since $5$ splits in $K$, so $\F_v=\F_5$.
One can check that $\widetilde{E}(\F_5)\simeq \Z/2\times \Z/4$, so $\gamma_v$ is injective.
Thus, $\Sel(E/\Kan)^{\Gamma}=0$.
The assertion follows from an application of Nakayama's Lemma.
\end{proof}

In \cite[Theorem~5.3]{RS95}, Rubin and Silverberg have shown that for every rational number $t$, there exist explicit polynomials $a(t)$, $b(t)$, and $c(t)$ such that
\begin{align*}
E_t : y^2 &= x^3 + a(t)x^2 + b(t)x + c(t)\\
&  =x^3 + \left(t^9 -t\right)x^2 + \Big( -3125t^{20} - 39583t^{18} + 11875t^{16} - 95000t^{14} + 61750t^{12} \\
&  + 41166t^{10} + 12350t^8 - 3800t^6 + 95t^4 -63t^{2} -1\Big)x + \Big(-521875t^{29} - 1355787t^{27} \\
& -7366875t^{25} + 9635000t^{23} - 8315875t^{21} -3678639t^{19} - 10560675t^{17} + 30400t^{15} \\
& + 2091615t^{13} + 134479t^{11} + 62583t^9 - 14200t^7 + 2327t^5 + 107t^3 + 7t\Big)
\end{align*}
is an elliptic curve defined over $\Q$, with good reduction at $p=5$, discriminant
\[
\Delta(E_t) = 4^3 \left(5t^4 - 2t^2 +1\right)^5\left( 25t^8 -100t^6 -210t^4 -20t^2 +1\right)^5,
\]
and the additional property that $E_t[5] \simeq E[5]$.

Let $f(t) = (5t^4 - 2t^2 +1)( 25t^8 -100t^6 -210t^4 -20t^2 +1)$.
We can check that $\gcd(f(t),f^\prime(t)) =1$.
Let $\cA$ be the set of prime numbers $\ell>5$ such that $f(t)$ and $f^\prime(t)$ are prime to each other modulo $\ell$.
Let $\cA^\prime$ be the subset of $\cA$ consisting of those primes which split completely in $K(E[5])$.
By the Chebotarev density theorem, the set $\cA^\prime$ is infinite with positive density.
Note that primes in $\cA^\prime$ must split completely in $K$. 
Hence, these primes are finitely decomposed in $\Kan/K$.

Given a positive integer $k$, choose $\ell_1, \ldots, \ell_k\in \cA^\prime$.
Then, for all $1\leq i \leq k$ and $v|\ell_i$ in $K$, the Frobenius, denote by $\op{Frob}_v$, acts trivially on $E[5]$.
By arguments similar to \cite[p.~189]{Kim09}, there exists $t$ for which $E_t$ satisfies \emph{both} the properties
\begin{enumerate}
 \item $E[5]\simeq E_t[5]$.
 \item $E_t$ has bad reduction at each $\ell_i$.
\end{enumerate}


\begin{Lemma}
\label{check for torsion and mu}
With notation as above, suppose that $\Omega(E_t)\subseteq \Sigma$.
Then, the 5-primary Selmer group $\Sel(E_t/K)$ is $\Lambda$-cotorsion with $\mu=0$.
\end{Lemma}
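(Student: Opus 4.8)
The plan is to deduce this from Theorem~\ref{resisothm}, applied to the pair $E_1=E$ (the curve $32a2$) and $E_2=E_t$. First I would check that the standing hypotheses of Section~\ref{section congruent galois representations} and those of Theorem~\ref{resisothm} are in force for this pair. The two curves are $5$-congruent by construction of the Rubin--Silverberg family, and both have good ordinary reduction at $5$: this is part of the setup for $E$, while $E_t$ has good reduction at $5$ by the properties of the family recalled above, and it is ordinary because $E_t[5]\simeq E[5]$ forces the restriction of $\overline{\rho}_{E_t,5}$ to a decomposition group at $5$ to be reducible with unramified quotient (equivalently $a_5(E_t)\equiv a_5(E)=-2\not\equiv 0\pmod 5$). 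Since $E(K)[5]=0$ and $E[5]\simeq E_t[5]$ as $\Gal(\closure{K}/K)$-modules, we also get $E_t(K)[5]=H^0(\Gal(\closure{K}/K),E[5])=0$, so condition (1) of Theorem~\ref{resisothm} holds.

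The one point requiring work is Assumption~\ref{finitely decomposed assumption} for the pair, i.e. $\Omega(E)\subseteq\Sigma$ and $\Omega(E_t)\subseteq\Sigma$. The inclusion for $E_t$ is precisely the hypothesis of the present lemma. For $E$ I would show $\Omega(E)=\emptyset$. The curve $32a2$ has good reduction at every prime not dividing $2$, so by Definition~\ref{def of Omega0} any prime of $\Omega(E)$ must lie above $2$; but $2$ is ramified in $K=\Q(\sqrt{-1})$, with a unique prime $\mathfrak{q}=(1+i)$ above it, and $K_{\mathfrak{q}}=\Q_2(\sqrt{-1})$ does not contain $\mu_5$ (since $[\Q_2(\mu_5):\Q_2]=4$ while $[K_{\mathfrak{q}}:\Q_2]=2$), so condition (2) of Definition~\ref{def of Omega0} is automatic at $\mathfrak{q}$. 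Hence $\mathfrak{q}\in\Omega(E)$ would force $\mathfrak{q}\mid N_E/\closure{N}_E$, that is, a drop of the conductor exponent at $2$ in passing from $E$ (equivalently $E[5^\infty]$) to $\overline{\rho}_{E,5}$. No such drop occurs: one checks this directly from the CM description $\overline{\rho}_{E,5}|_{\Gal(\closure{K}/K)}=\overline{\psi}\oplus\overline{\psi}^{\,c}$, where the ramified part of the Hecke character $\psi$ at $(1+i)$ has $2$-power order and therefore injects into $\overline{\F}_5^{\,\times}$ under reduction mod $5$; equivalently this can be read off the LMFDB data for $32a2$. Thus $\mathfrak{q}\notin\Omega(E)$, so $\Omega(E)=\emptyset\subseteq\Sigma$.

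Granting these verifications, the conclusion is immediate: by Lemma~\ref{trivial Selmer} we have $\Sel(E/\Kan)=0$, which is in particular a $\Lambda$-cotorsion module with $\mu=0$, so Theorem~\ref{resisothm} yields that $\Sel(E_t/\Kan)$ is $\Lambda$-cotorsion with $\mu=0$ as well, which is the assertion. The main obstacle in this plan is the local conductor computation at $2$ for $E$: one must confirm that the reduction $E[5^\infty]\to E[5]$ does not decrease the conductor exponent at the ramified prime $2$, since this is exactly what forces $\Omega(E)=\emptyset$ and hence keeps $\Omega(E)$ inside $\Sigma$. Every other step is a routine check that the hypotheses of Section~\ref{section congruent galois representations} and of Theorem~\ref{resisothm} hold.
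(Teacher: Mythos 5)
Your proposal is correct and follows the same overall route as the paper: apply Theorem~\ref{resisothm} to the pair $(E_1,E_2)=(E,E_t)$ and combine it with Lemma~\ref{trivial Selmer}, after checking that $\Omega(E)$ and $\Omega(E_t)$ both lie in $\Sigma$. The one place where you do genuinely more work is in showing $\Omega(E)=\emptyset$. The paper dispatches this in a single sentence (``$E$ has additive reduction at $2$, therefore $\Omega(E)$ is empty''), which is terse: since $K_{\mathfrak q}=\Q_2(i)$ does not contain $\mu_5$, condition (2) of Definition~\ref{def of Omega0} is vacuous at $\mathfrak q=(1+i)$, so one really must check that condition (1) fails, i.e.\ that the conductor exponent does not drop in passing from $T_5E$ to $E[5]$ at $\mathfrak q$. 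You verify this by hand using the CM structure $\overline{\rho}_{E,5}|_{G_K}=\overline\psi\oplus\overline\psi^{\,c}$: the ramified part of $\psi$ at $\mathfrak q$ has $2$-power order (since $\mathcal O_{K_\mathfrak q}^\times$ is $\mu_4\times(1+\mathfrak q)$), hence injects under reduction mod $5$. That is correct. The implicit reasoning behind the paper's one-liner is the more general fact that at a prime $v\nmid p$ of additive, potentially good reduction with $p\geq 5$, inertia acts on $T_pE$ through a finite group of order coprime to $p$, so reduction mod $p$ is injective on the inertia image and the Artin conductor is unchanged; here this applies because $j(E)=1728\neq\infty$. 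Both routes are valid; yours is a concrete computation tailored to this CM curve, theirs invokes the general potentially-good-reduction argument. The remaining verifications you carry out (good ordinary reduction of $E_t$ at $5$ via $a_5(E)=-2\not\equiv 0\bmod 5$, and $E_t(K)[5]=0$) are routine and agree with the paper's standing hypotheses.
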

\begin{proof}
Recall that $E(K)[5]=0$.
The elliptic curve $E$ has additive reduction at the only prime of bad reduction, which is 2.
Computations on Sage show that the Kodaira type at $v|2$ is $I_2^*$.
It follows from \cite[Theorem~3(2)]{dokchitser2012remark} that $E$ has additive reduction of type $I_2^*$ at the primes $w|v$ of $K'$.
Therefore, $\Omega(E)$ is empty.
By assumption, $\Omega(E_t)\subseteq \Sigma$.
Thus, we can apply Theorem~\ref{resisothm}.
The claim follows from Lemma~\ref{trivial Selmer}.
\end{proof}

Using Proposition~\ref{prop: compare lambdaS0} and \eqref{relating im primitive and classical lambda invariant}, we can compare $\lambda$-invariants of $\Sel(E/\Kan)$ and $\Sel(E_t/\Kan)$.
We obtain that,
\[
\lambda(E_t/\Kan) = \lambda(E/\Kan) + \sum_{v\in \Omega_0} \left(\sigma_{E}^{(v)} - \sigma_{E_t}^{(v)}\right) \geq \lambda(E/\Kan) + \sum_{v\in \Omega(E_t)} \left(\sigma_{E}^{(v)} - \sigma_{E_t}^{(v)}\right) 
\]
where $\sigma_{E}^{(v)}$ (resp. $\sigma_{E_{t}}^{(v)}$) denotes the $\Z_p$-corank of $\cH_v(E/\Kinf)$ (resp. $\cH_v(E_t/\Kinf)$).
It follows from our discussion that for any given integer $k$, we can construct an elliptic curve $E_t$ with bad reduction at $\ell_1, \ldots, \ell_k$ such that

\[
\lambda(E_t/\Kan) \geq \lambda(E/\Kan) + \sum_{i=1}^k \sum_{v|\ell_i} \left(\sigma_{E}^{(v)} - \sigma_{E_t}^{(v)}\right) \geq \lambda(E/\Kan) + 2k \geq 2k.
\]
We remark that we have used the fact that the rational primes $\ell_i$ split in $K$.

\subsection*{Example}
We work out an explicit example when $p=5$.
The number field $\Q(E[5])$ is defined by the polynomial
{\small
\[
\begin{split}& x^{32} - 40 x^{31} + 12000 x^{29} - 10760 x^{28} - 266800 x^{27} + 11468000 x^{26} \\ 
&  - 1096600000 x^{25} - 5566913400 x^{24} + 142092440000 x^{23}  + 4369939320000 x^{22}\\ 
& - 378135400000 x^{21} - 257156918536000 x^{20} - 6566363103320000 x^{19} \\
& - 68617519071000000 x^{18} + 71545316608000000 x^{17} 
+
10219246387427710000 x^{16} \\ 
&  + 120765118794306400000 x^{15} + 1192541074750776000000 x^{14} \\
&+ 11957464808846300000000 x^{13} + 82847553195104300000000 x^{12} \\ 
&+ 1265538538816402500000000 x^{11} + 17074052628101627500000000 x^{10}\\ 
& + 172311923953883400000000000 x^{9} + 1455441332493456906250000000 x^{8} \\ 
&+ 9447284167114381875000000000 x^{7} + 49318124991568150000000000000 x^{6}  \\ 
&+ 129093097413215156250000000000 x^{5} - 2350246524757890625000000000 x^{4} \\ 
&- 682311431690296875000000000000 x^{3} - 475786411839253906250000000000 x^{2} \\ 
& + 246274993230703125000000000000 x + 1450426905571313476562500000000.\end{split}
\]}
Since the extension is rather large it becomes difficult to find primes which split in it via direct calculation.
We appeal to known results about the splitting of primes in fields of the form $\Q(E[N])$.
The conductor of $E$ is $2^5=32$ and hence all prime numbers $\ell \neq 2$ are unramified in $\Q(E[5])$.
If $\ell$ is a prime number, different from 2 and 5, which splits in $\Q(E[5])$, then indeed it is easy to see that the mod-$5$ characteristic polynomial of $\op{Frob}_{\ell}$ must equal $(X-1)^2=X^2-2X+1$.
This is equivalent to requiring that 
\begin{enumerate}
    \item\label{triviality conditions 1} $\ell\equiv 1\mod{5}$, and
    \item\label{triviality conditions 2} $a_{\ell}(E)\equiv 2\mod{5}$, where $a_{\ell}(E):=\ell+1-\#E(\F_{\ell})$.
\end{enumerate}
This is however not a sufficient condition for $\ell$ to split in $\Q(E[5])$.
Let $\Phi_5(X,Y)$ be the modular function (see \cite[Section~11]{Cox11} for a precise definition).
Define
\[\Phi_5^E(X):=\Phi_5(X, j(E))=\Phi_5(X, 1728).\]
A prime $\ell$ splits in $\Q(E[5])$ if in addition to the above two conditions, 
\begin{enumerate}[resume]
    \item $\Phi_5^E(X)$ splits in $\F_{\ell}[X]$ into distinct linear factors.
\end{enumerate}
We refer the reader to \cite[Proposition~5.6.3]{adelmann04} for a proof of the claim.
For the calculation of $\Phi_5^E(X)$, we refer the reader to \cite[Section~13]{Cox11}.
Implementing the algorithms in \cite{BLS12, bruinier16}, the modular polynomials may be calculated.
The calculation was performed by A.~Sutherland and has been obtained from his website (see \href{https://math.mit.edu/~drew/modpolys/jfiles/phi_j_5.txt}{this link}).

We thus have that in particular
{\small
\[
\begin{split}
& \Phi_5(X,Y)\\
   =& 141359947154721358697753474691071362751004672000 \\
&+ 53274330803424425450420160273356509151232000 X \\
& - 264073457076620596259715790247978782949376 XY \\
&+  6692500042627997708487149415015068467200 X^2 \\
&+  36554736583949629295706472332656640000 X^2Y \\
&+ 5110941777552418083110765199360000 X^2Y^2\\
&+  280244777828439527804321565297868800 X^3\\
&-192457934618928299655108231168000 X^3Y\\
&+26898488858380731577417728000 X^3 Y^2-441206965512914835246100 X^3 Y^3\\
&+1284733132841424456253440 X^4+ 128541798906828816384000X^4 Y\\
&+ 383083609779811215375 X^4 Y^2+  107878928185336800 X^4 Y^3\\
&+ 1665999364600 X^4Y^4 + 1963211489280 X^5
- 246683410950 X^5 Y\\ & + 2028551200 X^5Y^2
-  4550940 X^5Y^3+  3720 X^5Y^4-  X^5 Y^5+ X^6.
\end{split}\]}
Recall that primes in the set $\cA^\prime$ also split in $K=\Q(\sqrt{-1})$, i.e., they satisfy \begin{enumerate}[resume]
    \item $\ell \equiv 1 \mod 4$.
\end{enumerate}
Using SageMath \cite{sagemath}, we find primes ($\leq 10^5$) satisfying conditions $(1)-(4)$.
There are two primes, namely $\ell_1 = 63241$ and $\ell_2 = 63901$.
The factorization of the modular polynomial in the corresponding polynomial rings is as follows
{\small
\begin{align*}
\Phi_5^E(X) &= (x + 9130) (x + 26600) (x + 28822) (x + 31643) (x + 37410) (x + 60303) \textrm{ in } \mathbb{F}_{63241}(X),\\
\Phi_5^E(X) &= (x + 15646) (x + 16523) (x + 16743) (x + 31583) (x + 36229) (x + 58255) \textrm{ in } \mathbb{F}_{63901}(X).
\end{align*}
}
There exists $t= 1059545078$, such that
\begin{enumerate}[label=(\roman*)]
    \item $E[5]\simeq E_t[5]$.
    \item $E_t$ has bad reduction at the primes above $\ell_1$ and $\ell_2$.
\end{enumerate}
The Weierstrass equation of $E_t$ is given by
\begin{align*}
E_{1059545078}: y^2 &= x^3 + a(1059545078)x^2 + b(1059545078)x + c(1059545078)\\
&= x^3 + 168296446137189087194501767921833910783492001998159631069\\
&\ \ \ \ 7153199761832534651819530 x^2 - 99366224513122955264858859328518\\
&\ \ \ \  66745867676250607317503754405604484994604040262265914871891763\\
&\ \ \ \ 67949528696260899356431612520935180977428921937667494177305190\\
&\ \ \ \ 6053441051323218105644624301x -
279273806942198208191162128630\\
&\ \ \ \ 62552373448346890426050059883907340261159020802156537659423817\\
&\ \ \ \ 30770268443471585676947896166109727494303843884645456030625365\\
&\ \ \ \ 28979753424554225598656831724236790266467819045996204975383860\\
&\ \ \ \ 9933396090752989113892249177868807612498499698298014.
\end{align*}
For this value of $t$, we find that
\begin{align*}
f(t) = &13 \times 401 \times 63241 \times 63901 \times 21068381440942021 \times 23007701426021875081 \\
& \times 24504438741475825204304998173516406719475833143478257969366221.
\end{align*}
Each of the prime factors of $f(1059545078)$ are of the form $1 \pmod{4}$.
Recall that these primes factors are the primes of bad reduction of $E_{1059545078}$.
Therefore, we see that \emph{all} the primes of bad reduction of $E_{1059545078}$ split completely in $K$, and hence are finitely decomposed in $\Kan$.

\begin{Lemma}
With respect to notation above, let $p=5$, $E_1=E$ and $E_2=E_{1059545078}$.
The Assumptions \ref{finitely decomposed assumption} and \ref{cotorsionass} are satisfied in this example.
\end{Lemma}

\begin{proof}
In Lemma~\ref{check for torsion and mu} we have already checked that $\Omega(E_1)=\emptyset$.
Note that all primes $\ell\neq 2$ at which $E_2$ has bad reduction are of the form $\ell \equiv 1\mod{4}$.
Thus, they split in $K$ and the primes $v|\ell$ lie in $\Sigma$.
Thus, the only prime $v\in \mathfrak{T}\backslash \Sigma$ is $v=(1+i)$, the unique prime of $K$ that lies above $2$.
We show that $v=(1+i)$ is not contained in $\Omega_0$.
Computations on Sage show that $E_2$ has additive reduction at $v$ of Kodaira type $I_2^*$.
By \cite[Theorem~3(2)]{dokchitser2012remark}, $E_2$ has additive reduction of type $I_2^*$ at the primes $w|v$ of $K'$.
Hence, by Definition \ref{def of Omega0}, we have that $v$ is not contained in $\Omega_0$.
In particular, $\Omega_0\subset \Sigma$.
This proves that Assumption~\ref{finitely decomposed assumption} is satisfied.

Assumption~\ref{cotorsionass} follows from Lemma~\ref{check for torsion and mu} and the fact that $\Omega_0$ is contained in $\Sigma$.
\end{proof}
Each of the primes $\ell_1$ and $\ell_2$ split in $K$, and thus give rise to $4$ primes for which Lemma~\ref{last lemma} applies.
The elliptic curve $E_1$ has good reduction at these primes, and $E_2$ has bad reduction at them, and we obtain that
\[
\lambda(E_{1059545078}/\Kan) \geq \lambda(E/\Kan) + 4 \geq 4
\]
since we know that $\Sel(E/\Kan)=0$.
\begin{Remark}
It was possible to find primes up to $10^7$ satisfying properties $(1)-(4)$, and there are 24 such primes: $\ell_1=63241$, $\ell_2=63901$, $\ell_3=514561$, $\ell_4=1311341$, $\ell_5=2399081$, $\ell_6=2502301$, $\ell_7=2620301$, $\ell_8=2790461$, $\ell_9=3325121$, $\ell_{10}=3436501$, $\ell_{11}=4046401$, $\ell_{12}=4050281$, $\ell_{13}=4559101$, $\ell_{14}=4800421$, $\ell_{15}=5403361$, $\ell_{16}=5609321$, $\ell_{17}=6660221$, $\ell_{18}=7601861$, $\ell_{19}=7959521$, $\ell_{20}=8942501$, $\ell_{21}=8959921$, $\ell_{22}=9181901$, $\ell_{23}=9187081$, and $\ell_{24}=9437321$.
However, the next part of the computation (i.e., finding $t$ satisfying $(i)$ and $(ii)$) becomes difficult (for us) with $k\geq 3$.
This is due to the product $\prod_{i=1}^k \ell_i$ being large when $k\geq 3$.
\end{Remark}
\section*{Acknowledgments}
The first named author acknowledges the support of the PIMS Postdoctoral Fellowship.
We thank the referee for carefully reading our manuscript.

\bibliographystyle{alpha}
\bibliography{references}
\end{document}